\renewcommand{\citepunct}{;\penalty\citemidpenalty\ }
\numberwithin{equation}{section}
\newtheorem{theorem}[equation]{Theorem}
\newtheorem{proposition}[equation]{Proposition}
\newenvironment{customthm}[1]
  {\innercustomthm}
  {\endinnercustomthm}
\newtheoremstyle{cited}{.5\baselineskip\@plus.2\baselineskip\@minus.2\baselineskip}{.5\baselineskip\@plus.2\baselineskip\@minus.2\baselineskip}{\itshape}{}{\bfseries}{\bfseries .}{5pt plus 1pt minus 1pt}{\thmname{#1}\thmnumber{ #2}\thmnote{ \normalfont#3}}
\theoremstyle{cited}
\newtheorem{citedthm}[equation]{Theorem}
\newtheorem{citedconj}[equation]{Conjecture}
\newtheorem{citedprop}[equation]{Proposition}
\theoremstyle{definition}
\newtheorem{definition}[equation]{Definition}
\newtheorem{example}[equation]{Example}
\newtheorem{setup}[equation]{Setup}
\newtheoremstyle{citeddef}{.5\baselineskip\@plus.2\baselineskip\@minus.2\baselineskip}{.5\baselineskip\@plus.2\baselineskip\@minus.2\baselineskip}{}{}{\bfseries}{\bfseries .}{5pt plus 1pt minus 1pt}{\thmname{#1}\thmnumber{ #2}\thmnote{ \normalfont#3}}
\theoremstyle{citeddef}
\newtheorem{citeddef}[equation]{Definition}
\theoremstyle{remark}
\newtheorem{remark}[equation]{Remark}
\newtheoremstyle{step}{.25\baselineskip\@plus.1\baselineskip\@minus.1\baselineskip}{.25\baselineskip\@plus.1\baselineskip\@minus.1\baselineskip}{\itshape}{}{\bfseries}{\bfseries .}{5pt plus 1pt minus 1pt}{\thmname{#1}\thmnumber{ #2}\thmnote{ \normalfont(#3)}}
\theoremstyle{step}
\newtheorem{step}{Step}[equation]
\DeclareMathOperator{\Ann}{Ann}
\DeclareMathOperator{\Ass}{Ass}
\DeclareMathOperator{\DD}{\mathbf{D}}
\DeclareMathOperator{\Frac}{Frac}
\DeclareMathOperator{\Hom}{Hom}
\DeclareMathOperator{\Perv}{Perv}
\DeclareMathOperator{\Reg}{Reg}
\DeclareMathOperator{\RRGGamma}{\underline{R\Gamma}}
\DeclareMathOperator{\RRHHom}{\underline{RHom}}
\DeclareMathOperator{\Spec}{Spec}
\DeclareMathOperator{\Tot}{Tot}
\DeclareMathOperator{\depth}{depth}
\DeclareMathOperator{\im}{im}
\newcommand{\Dp}{\prescript{\mathfrak{p}}{}{\mathcal{D}}}
\newcommand{\an}{\mathrm{an}}
\newcommand{\DR}{\mathrm{DR}}
\newcommand{\et}{\mathrm{\acute{e}t}}
\newcommand{\id}{\mathrm{id}}
\newcommand{\rh}{\mathit{rh}}
\newcommand{\CC}{\mathbf{C}}
\newcommand{\LL}{\mathrm{L}}
\newcommand{\QQ}{\mathbf{Q}}
\newcommand{\ZZ}{\mathbf{Z}}
\newcommand{\cC}{\mathcal{C}}
\newcommand{\cD}{\mathcal{D}}
\newcommand{\cH}{\mathcal{H}}
\newcommand{\cO}{\mathcal{O}}
\newcommand{\fm}{\mathfrak{m}}
\newcommand{\fp}{\mathfrak{p}}
\newcommand{\fq}{\mathfrak{q}}
\newcommand{\sD}{\mathscr{D}}
\newcommand{\sH}{\mathscr{H}}
\newcommand{\sM}{\mathscr{M}}
\newcommand{\sN}{\mathscr{N}}
\providecommand\given{}
\newcommand\SetSymbol[1][]{\nonscript\:#1\vert\allowbreak\nonscript\:\mathopen{}}
\DeclarePairedDelimiterX\Set[1]\{\}{\renewcommand\given{\SetSymbol[\delimsize]}#1}
\newcommand{\hooklongrightarrow}{\lhook\joinrel\longrightarrow}
\begin{document}
\title[Local cohomology modules of excellent locally unramified regular
rings]{Finiteness of associated primes for\\local cohomology modules of excellent locally\\
unramified regular rings of finite Krull dimension}
\author{Takumi Murayama}
\address{Department of Mathematics\\Purdue University\\150 N. University
Street\\West Lafayette, IN 47907-2067\\USA}
\email{\href{mailto:murayama@purdue.edu}{murayama@purdue.edu}}
\urladdr{\url{https://www.math.purdue.edu/~murayama/}}

\thanks{This material is based upon work supported by the National Science
Foundation under Grant No.\ DMS-2201251}
\subjclass[2020]{Primary 13D45, 13H05; Secondary 14B15, 32S60, 13A35, 14F10}

\keywords{local cohomology, associated prime, regular ring, perverse sheaf,
\(\mathscr{D}\)-module}

\makeatletter
  \hypersetup{
    pdftitle={Finiteness of associated primes for local cohomology modules of excellent locally unramified regular rings of finite Krull dimension},
    pdfsubject=\@subjclass,
    pdfkeywords={local cohomology, associated prime, regular ring, perverse sheaf, D-module}
  }
\makeatother

\begin{abstract}
  Thirty years ago, Huneke (for local rings) and Lyubeznik (in
  general) conjectured that for all regular rings $R$, the local cohomology
  modules $H^i_I(R)$ have finitely many associated prime ideals.
  We prove substantial new cases of their conjecture by proving that the
  local cohomology modules $H^i_I(R)$ have finitely many
  associated prime ideals whenever $R$ is an excellent regular
  ring of finite Krull dimension such that $R/pR$
  is regular and $F$-finite for every prime number $p$.
  Our result is new even for excellent regular $\mathbf{Q}$-algebras of
  finite Krull dimension, for example for finitely generated rings over formal
  power series rings over fields of characteristic zero.
  Our proof uses perverse sheaves, $\mathscr{D}$-modules,
  the Riemann--Hilbert correspondence for smooth complex varieties,
  N\'eron--Popescu desingularization,
  and
  a delicate Noetherian approximation argument.
\end{abstract}

\maketitle

\section{Introduction}\label{sect:intro}
\subsection{Background}
Let \(R\) be a Noetherian ring and let \(I\) be an ideal in \(R\).
In \cite[Expos\'e XIII, Conjecture 1.1]{SGA2}, Grothendieck conjectured that
\(\Hom_R(R/I,H^i_I(M))\) is finitely generated when \(M\) is finitely generated.
Hartshorne showed that Grothendieck's conjecture is false in
general \cite[\S3]{Har70}.
On the other hand, Hartshorne showed that Grothendieck's conjecture is true in
some cases when \(R\) is a regular local ring \cite[Corollaries 6.3 and
7.7]{Har70}.
\par Since then, an important and difficult open question in commutative algebra
has been to determine what finiteness properties hold for local
cohomology modules.
While Grothendieck's conjecture is false in general even for
regular local rings \citeleft\citen{HK91}\citemid Theorem 2.3 and Example 2.4\citeright,
Huneke asked whether
\(H^i_I(M)\) has
finitely many associated prime ideals when \(M\) is finitely generated
\cite[Problem 4]{Hun92}.
Huneke's question was motivated in part by Faltings's
local-global principle for finite generation of local cohomology modules
\cite{Fal81}.
Huneke's question is weaker than Grothendieck's conjecture since
\[
  \Ass_R\bigl(H^i_I(M)\bigl) =
  \Ass_R\Bigl(\Hom_R\bigl(R/I,H^i_I(M)\bigr)\Bigr)
\]
by \cite[Chapter IV, \S2, no.\ 1, Proposition 10]{Bou72}.\medskip
\par Huneke and Sharp proved that for regular rings \(R\) of equal characteristic
\(p > 0\), the modules \(H^i_I(R)\) have finitely many associated
prime ideals \cite[Corollary 2.3]{HS93}.
Lyubeznik showed the analogue of Huneke and Sharp's result in equal
characteristic zero if \(R\) is local or affine \cite[Theorem 3.4\((c)\) and Remark
3.7\((i)\)]{Lyu93}.
Motivated by these results, Huneke (in the local case) and Lyubeznik (in
general) made the following conjecture, which has now been open for
three decades.
\begin{citedconj}[{\citeleft\citen{Hun92}\citemid Conjecture 5.2\citepunct
  \citen{Lyu93}\citemid Remark 3.7\((i)\)\citepunct
  \citen{Lyu02}\citemid Open Question 1 on p.\
  132\citeright}]\label{conj:lyubeznik}
  Let \(R\) be a regular ring and let \(I \subseteq R\) be an ideal.
  Then, for every \(i \ge 0\), the local cohomology module \(H^i_I(R)\) has
  finitely many associated prime ideals.
\end{citedconj}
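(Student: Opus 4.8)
Throughout I assume the hypotheses of the main theorem: \(R\) is excellent of finite Krull dimension with \(R/pR\) regular and \(F\)-finite for every prime number \(p\). After writing \(R\) as a finite product of normal domains I may assume \(R\) is a domain, and I let \(\Lambda\) denote its prime ring, so \(\Lambda = \ZZ\) if \(R\) has characteristic \(0\) or mixed characteristic and \(\Lambda = \mathbf{F}_p\) if \(R\) has positive characteristic \(p\). The plan is to show that \(\Lambda \to R\) is a regular homomorphism and then to use N\'eron--Popescu desingularization to descend the problem to smooth \(\Lambda\)-algebras of finite type. Regularity of \(\Lambda \to R\) follows from the hypotheses: \(R\) is flat over \(\Lambda\) (automatic when \(\Lambda=\mathbf{F}_p\), and because \(R\) is a torsion-free \(\ZZ\)-module when \(\Lambda=\ZZ\)), its generic fiber is a localization of \(R\) and hence regular, and its special fibers, when \(\Lambda=\ZZ\), are the rings \(R/pR\), which are regular by hypothesis; geometric regularity of all fibers then holds because \(\QQ\) and \(\mathbf{F}_p\) are perfect. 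Thus \(R=\varinjlim_\lambda R_\lambda\) is a filtered colimit of smooth \(\Lambda\)-algebras \(R_\lambda\) of finite type, and since the \v{C}ech complex on a finite generating set of \(I\) commutes with filtered colimits of rings, \(H^i_I(R)=\varinjlim_\lambda H^i_{I_\lambda}(R_\lambda)\) for the induced ideals \(I_\lambda\subseteq R_\lambda\) with \(I=I_\lambda R\).

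Next I would establish finiteness of \(\Ass_{R_\lambda}\bigl(H^i_{I_\lambda}(R_\lambda)\bigr)\) for each smooth finite-type \(\Lambda\)-algebra \(R_\lambda\). If \(\Lambda=\mathbf{F}_p\), then \(R_\lambda\) is regular, excellent, and \(F\)-finite of characteristic \(p\), and this is the theorem of Huneke and Sharp \cite{HS93}. If \(\Lambda=\ZZ\), I would treat separately the characteristic-zero locus and the closed sets \(V(pR_\lambda)\). The associated primes of characteristic \(0\) are exactly those of \(H^i_{I_\lambda}(R_\lambda)\otimes_\ZZ\QQ\), which is the local cohomology of the smooth \(\QQ\)-algebra \(R_\lambda\otimes_\ZZ\QQ\) with support in the extension of \(I_\lambda\), and hence a holonomic \(\sD\)-module over \(R_\lambda\otimes_\ZZ\QQ\); following Lyubeznik \cite{Lyu93}, one base changes to \(\CC\) and applies the Riemann--Hilbert correspondence on the smooth complex variety \(\Spec(R_\lambda\otimes_\ZZ\CC)\), under which this module corresponds (up to shift) to a perverse sheaf whose finitely many simple constituents are intersection cohomology complexes supported on irreducible closed subvarieties; those subvarieties carry all the associated primes, and the finite count descends from \(\CC\) to \(\QQ\) by faithful flatness. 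For the associated primes containing a prime \(p\), the long exact sequence of \(0\to R_\lambda\xrightarrow{\,p\,}R_\lambda\to R_\lambda/pR_\lambda\to 0\) bounds \(\Ass\) along each \(V(pR_\lambda)\) in terms of the associated primes of the local cohomology of the regular \(F\)-finite ring \(R_\lambda/pR_\lambda\), which are finite; a spreading-out argument comparing the perverse-sheaf stratification with its reductions modulo \(p\) then shows that only finitely many \(p\) contribute associated primes beyond those already forced on the characteristic-zero locus.

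The remaining step, passing to the colimit, is the one I expect to be the main obstacle---the delicate Noetherian approximation of the abstract. The difficulty is that the transition maps \(R_\lambda\to R_\mu\) produced by N\'eron--Popescu are not flat, so associated primes are not transported functorially and the fibers of \(\Spec R\to\Spec R_\lambda\) over an associated prime of \(H^i_{I_\lambda}(R_\lambda)\) need not be finite. One must show that every \(\fq\in\Ass_R\bigl(H^i_I(R)\bigr)\) is the limit of a compatible system of associated primes \(\fq_\lambda\in\Ass_{R_\lambda}\bigl(H^i_{I_\lambda}(R_\lambda)\bigr)\) which stabilizes in an appropriate sense for \(\lambda\) large, and that each stabilized system contributes only finitely many primes of \(R\); here the finite Krull dimension of \(R\) and the uniform control of the stratifications far along the system---again coming from the perverse-sheaf picture---are what prevent an infinite accumulation of associated primes. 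Combining this with the finiteness from the previous paragraph gives that \(\Ass_R\bigl(H^i_I(R)\bigr)\) is finite, establishing the conjecture under the hypotheses of the main theorem.
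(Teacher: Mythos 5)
Your proposal uses the same toolbox as the paper---N\'eron--Popescu desingularization, the Riemann--Hilbert correspondence, and $F$-module theory---but the order in which the reductions are performed differs in a way that leaves a genuine gap, one you yourself identify but do not resolve.

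You apply N\'eron--Popescu globally to $\Lambda\to R$ and then try to pass the finiteness of $\Ass$ through the colimit. As you note, the transition and insertion maps in the resulting system are not flat, so an associated prime of $H^i_I(R)$ has no reason to contract to an associated prime of some $H^i_{I_\lambda}(R_\lambda)$, and even if it did, the fibers of $\Spec R\to \Spec R_\lambda$ over such a prime are typically infinite. Your last paragraph only gestures at how ``uniform control of the stratifications'' might prevent accumulation; this is where the argument must actually do work, and no mechanism is given. The paper avoids the problem entirely by reversing the order: fix one associated point $x$, localize $X$ at $x$, set $R=\cO_{X,x}$, and \emph{then} apply N\'eron--Popescu, writing $R=\varinjlim R_\lambda$ as a colimit of essentially smooth \emph{local} $\QQ$-algebras localized at $\fm\cap R_\lambda$. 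Because generators of $\fm$ can be arranged to live in each $R_\lambda$, one gets $\dim R_\lambda=\dim R$ and $R/\fm_\lambda R=R/\fm$, so the insertion maps $R_\lambda\to R$ are flat by miracle flatness and have geometrically regular fibers. Flatness is precisely what transports the associated prime (via Gabber's limit theorem for local cohomology one finds $\xi_{\lambda_0}$ with $\Ann_{R_{\lambda_0}}(\xi_{\lambda_0})=\fm_{\lambda_0}$) and what makes pullback $t$-exact for the perverse $t$-structure, so the simple component supported at $\fm_\lambda$ pulls back to one supported at $\fm$. Finiteness then comes not from counting primes in a colimit but from the Artinian property of the perverse category on $X$ itself, which is why excellence and finite Krull dimension of $R$ (not just of the $R_\lambda$) matter.

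A second, smaller gap: your claim that a ``spreading-out argument comparing the perverse-sheaf stratification with its reductions modulo $p$'' shows only finitely many $p$ contribute is not an argument. The paper proves this by an elementary Koszul trick: $H^i(\underline f;R)$ is finitely generated, hence has finitely many associated primes $\fq_1,\dots,\fq_m$, and for any $\fp\subseteq A$ not among the $\fq_j\cap A$, the uniformizer of $A_\fp$ is a nonzerodivisor on $H^i_I(R)\otimes_A A_\fp$ (checked after completion via the Cohen structure theorem and the result of Bhatt--Blickle--Lyubeznik--Singh--Zhang for power series over a DVR). For the finitely many remaining $\fp$ one uses the long exact sequence together with the Blickle--B\"ockle finite-length theorem for $F$-finite $F$-modules over $F$-finite regular rings, refined by a local-completion and $\sD$-module argument to extract that every associated prime of $\ker(H^i_I(R)\xrightarrow{u}H^i_I(R))$ is the unique associated prime of a simple $F$-module subquotient of $H^{i-1}_I(R/\fp R)$.

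In short: your first two paragraphs are essentially correct in content, but the decisive structural move---localize at a single associated prime \emph{before} applying N\'eron--Popescu so that the approximating system consists of flat maps of local rings of equal dimension---is missing, and without it the colimit step you flag as the obstacle remains unproved.
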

In addition to the results from \cite{HS93,Lyu93} listed above, Conjecture
\ref{conj:lyubeznik} is known for differentiably admissible rings over fields of
characteristic zero (in the sense of \cite[Definition 1.2.3.6]{NM14})
\citeleft\citen{Lyu00}\citemid pp.\ 5880--5882\citepunct
\citen{NB13}\citemid Theorem 4.4\citeright\ (see also
\citeleft\citen{MNM91}\citemid \S4\citepunct \citen{Put18}\citemid Theorem
1.2\citeright),
for unramified regular local rings of mixed
characteristic \cite[Theorem 1]{Lyu00}, and for smooth algebras over
Dedekind domains \citeleft\citen{BBLSZ14}\citemid Theorem 4.3\citepunct
\citen{CRS}\citemid Theorem 6.10\citeright.
Conjecture \ref{conj:lyubeznik} is also known when \(\dim(R)\),
\(i\), or \(\dim(R)-i\) is sufficiently small.
See Table \ref{tab:lowdim}.
The assumption that \(R\) is regular
cannot be weakened substantially \cite{Sin00,Kat02,SS04}.
\subsection{Main results}
\par In this paper, we prove Huneke and Lyubeznik's Conjecture
\ref{conj:lyubeznik} for all excellent
locally unramified
regular rings of
finite Krull dimension such that \(R/pR\) is \(F\)-finite for every prime number \(p\).
We note that \(R/pR\) is regular if \(R_\fm\) is an unramified regular local
ring for every maximal ideal \(\fm \subseteq R\) containing \(p\).
Following \cite[p.\ 88]{Coh46}, a regular local ring \((R,\fm)\) is
\textsl{unramified} if it contains a field or if it is of mixed
characteristic \((0,p)\) and \(p \notin \fm^2\).
\par In fact, we show the following more general statement which recovers the
formulation in terms of prime numbers in the previous paragraph
by setting \(A = \ZZ\).
Recall that a ring of prime characteristic \(p > 0\) is \textsl{\(F\)-finite} if
the (absolute) Frobenius map is module-finite \cite[p.\ 464]{Fed83}.
See Definition \ref{def:excellent} for the definition of an excellent ring.
\begin{customthm}{\ref{thm:mainnewtext}}\label{thm:mainnew}
  Let \(R\) be an excellent regular ring of finite Krull dimension that is flat
  over a regular domain \(A\) of dimension \(\le 1\).
  Suppose for every nonzero prime ideal \(\fp \subseteq A\) such that \(A/\fp\)
  is of prime characteristic, the quotient ring
  \(R/\fp R\) is regular and one of the following assumptions hold:
  \begin{enumerate}[label=\((\roman*)\)]
    \item \(R/\fp R\) is \(F\)-finite.
    \item \(A_\fp\) is excellent and
      \(R \otimes_A A_\fp\) is essentially of finite type over \(A_\fp\).
  \end{enumerate} 
  Let \(I \subseteq R\) be an ideal.
  Then, for every \(i \ge 0\), the local cohomology module \(H^i_I(R)\) has
  finitely many associated prime ideals.
\end{customthm}
\begin{table}[t]
  \begin{ThreePartTable}
    \begin{TableNotes}
      \note{The table excludes the cases proved in
      \cite{HS93,Lyu93,Lyu00,NB13,BBLSZ14,CRS}, which hold in arbitrary
      dimension.}
      \item[\(\QQ\)]\label{tn:put16q} \(R\) is excellent (more generally,
        \(\Reg(R/\fp)\) is open for all primes \(\fp\) of coheight \(1\)) and contains
        \(\QQ\).
    \end{TableNotes}
    {\footnotesize
      \renewcommand{\arraystretch}{1.125}
      \begin{longtable}{*{3}{c}}
        \caption{\normalsize Conjecture \ref{conj:lyubeznik} for
        small \(\dim(R)\), \(i\), or \(\dim(R)-i\).\label{tab:lowdim}}\\
        \toprule
        Assumptions & Conjecture \ref{conj:lyubeznik} for \(R\) local
        & Conjecture \ref{conj:lyubeznik} in general\\
        \cmidrule(lr){1-1} \cmidrule(lr){2-3}
        \(i=\depth_I(R)\) &
        \multicolumn{2}{c}{\citeleft\citen{KS99}\citemid Thm.\ 
        B\citepunct \citen{BRS00}\citemid Prop.\ 2.2\citeright}\\
        \(i\le1\) & \multicolumn{2}{c}{\citeleft\citen{KS99}\citemid Thm.\ 
        B\citepunct \citen{BLF00}\citemid Thm.\ 2.2\citeright}\\
        \(i=2\) & \cite[Thm.\ 2.4]{BN08} & \cite[Cor.\ 3.4]{DQ18}\\
        \(i=\dim(R)-1\) & \makebox[\widthof{\citeleft\citen{Put16}\citemid
        Thm.\ 1.4\citepunct \citen{Wan23}\citemid Thm.\ 2.89\citeright}][c]{\cite[Cor.\
        2.5]{Mar01}} &
        \citeleft\citen{Put16}\citemid Thm.\ 1.4\citepunct
        \citen{Wan23}\citemid Thm.\ 2.89\citeright\tnotex{tn:put16q}\\
        \(i=\dim(R)\) & \multicolumn{2}{c}{\cite[Rem.\ 3.11]{BRS00}}\\
        \midrule
        \(\dim(R)\le3\) & \citeleft\citen{Hel01}\citemid Cor.\ 3\citepunct
        \citen{Mar01}\citemid Thm.\ 2.9\citeright & \cite[Cor.\ 8.5]{Put16}\\
        \(\dim(R)=4\) & \citeleft\citen{Hel01}\citemid Cor.\ 3\citepunct
        \citen{Mar01}\citemid Thm.\ 2.9\citeright &
        \citeleft\citen{Put16}\citemid Thm.\ 1.4\citepunct
        \citen{Wan23}\citemid Thm.\ 2.89\citeright\tnotex{tn:put16q}\\
        \bottomrule
        \insertTableNotes
      \end{longtable}
    }
  \end{ThreePartTable}
\end{table}
For example, Theorem \ref{thm:mainnew} is new for all excellent regular rings of
finite Krull dimension satisfying one of the following conditions:
\begin{enumerate}[label=\((\alph*)\),ref=\alph*]
  \item\label{case:specialqalg}
    \(R\) is a non-local \(\QQ\)-algebra that is neither
      differentiably admissible over a field of characteristic zero nor smooth
      over a Dedekind domain.
    \item \(R\) does not contain a field and is not smooth over a Dedekind
      domain, and \(R/pR\) is regular and \(F\)-finite for every prime number
      \(p\).
\end{enumerate}
Case \((\ref{case:specialqalg})\)
includes the case when \(R\) is finitely generated over a formal
power series ring over a field of characteristic zero.
This case was pointed out as a substantial open case of Conjecture
\ref{conj:lyubeznik} in \cite[p.\ 2299]{Hoc19}.
\par Theorem \ref{thm:mainnew} also reproves the known cases of
Conjecture \ref{conj:lyubeznik} proved in
\cite{Lyu93,Lyu00,NB13,BBLSZ14,CRS}.
The only known case of Conjecture \ref{conj:lyubeznik} in arbitrary dimension
not recovered as part of Theorem \ref{thm:mainnew} is the case when \(R\) is of
prime characteristic \(p > 0\) and neither \(F\)-finite nor essentially of finite
type over a Noetherian local ring \cite{HS93}.\medskip
\par We also prove new cases of the following more general conjecture of
Lyubeznik.
The class of \textsl{Lyubeznik functors} on
\(R\) is a class of functors introduced in \cite{Lyu93} and named in
\cite{NB13} that contains all compositions of local cohomology functors
\(H^{i}_{Y}(\,\cdot\,)\) where \(Y\) is a locally closed subset of
\(\Spec(R)\).
See Definition \ref{def:lyubeznikfunctors} for the precise definition of a
Lyubeznik functor.
\begin{citedconj}[{\cite[Remark 3.7\((i)\)]{Lyu93}}]\label{conj:lyubeznikfunctor}
  Let \(R\) be a regular ring and let \(T(\,\cdot\,)\) be a Lyubeznik functor on
  \(R\).
  Then, the module \(T(R)\) has finitely many associated prime ideals.
\end{citedconj}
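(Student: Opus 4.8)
The plan is to prove the following statement, which subsumes Theorem~\ref{thm:mainnew} (take $T(\,\cdot\,) = H^i_I(\,\cdot\,)$): \emph{if $R$ satisfies the hypotheses of Theorem~\ref{thm:mainnew} and $T(\,\cdot\,)$ is a Lyubeznik functor on $R$, then $\Ass_R\bigl(T(R)\bigr)$ is finite.} Since $\fq \in \Ass_R(T(R))$ if and only if $\fq R_\fq \in \Ass_{R_\fq}\bigl(T(R)_\fq\bigr)$, and localization carries a Lyubeznik functor to a Lyubeznik functor, I would first stratify $\Ass_R(T(R))$ by means of the morphism $\Spec R \to \Spec A$: it is enough to bound separately the primes $\fq$ with $\mathbf{Q} \subseteq R_\fq$ and the primes $\fq$ containing a prime number $p$. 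For the former, $R_\fq$ is an excellent regular $\mathbf{Q}$-algebra of finite Krull dimension, so these primes are governed by a Lyubeznik functor applied to $R \otimes_A \Frac(A)$ (which contains $\mathbf{Q}$), or to $R$ itself when $A \supseteq \mathbf{Q}$. For the latter, the long exact sequences attached to $0 \to R \xrightarrow{p} R \to R/pR \to 0$, which are compatible with the local cohomology functors out of which $T$ is built, reduce the question to a Lyubeznik functor applied to $R/pR$; this ring is regular and, by hypothesis, either $F$-finite — so that Huneke--Sharp's theorem \cite[Corollary~2.3]{HS93} applies — or essentially of finite type over the excellent local ring $A_\fp$, which is covered by the cases recalled in the introduction. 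Verifying that each collection is finite and that the reductions respect composition of local cohomology functors is routine; the new content is the equal-characteristic-zero case, treated below.

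So let $R$ be an excellent regular $\mathbf{Q}$-algebra of finite Krull dimension and $T(\,\cdot\,)$ a Lyubeznik functor on $R$. Because $\mathbf{Q}$ has characteristic zero, $\mathbf{Q} \to R$ is a regular homomorphism, so N\'eron--Popescu desingularization writes $R = \varinjlim_\lambda R_\lambda$ as a filtered colimit of smooth finite-type $\mathbf{Q}$-algebras $R_\lambda$; one may further arrange (using generic smoothness in characteristic zero) that the transition maps $R_\lambda \to R_\mu$ are smooth. The finitely many ideals defining the locally closed subsets in the definition of $T$ are finitely generated, hence descend to a single index $\lambda_0$, yielding a Lyubeznik functor $T_\lambda$ on $R_\lambda$ for $\lambda \ge \lambda_0$ together with an identification $T(R) = \varinjlim_\lambda T_\lambda(R_\lambda)$. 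Each $R_\lambda$ is a smooth affine $\mathbf{Q}$-variety, so Lyubeznik's theory of holonomic $\cD$-modules \cite{Lyu93} shows $\Ass_{R_\lambda}(T_\lambda(R_\lambda))$ is finite. What remains — and what is genuinely delicate — is to bound $\#\Ass_{R_\lambda}(T_\lambda(R_\lambda))$ independently of $\lambda$ and then to transfer the bound to the colimit.

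For the uniform bound I would pass to $\mathbf{C}$. Fix an embedding $\mathbf{Q} \hookrightarrow \mathbf{C}$; then $X_\lambda := \Spec(R_\lambda \otimes_{\mathbf{Q}} \mathbf{C})$ is a smooth affine complex variety and, by faithful flatness of $R_\lambda \to R_\lambda \otimes_{\mathbf{Q}} \mathbf{C}$, one has $\#\Ass_{R_\lambda}(T_\lambda(R_\lambda)) \le \#\Ass_{\cO_{X_\lambda}}\bigl(T_\lambda(R_\lambda) \otimes_{\mathbf{Q}} \mathbf{C}\bigr)$. The base-changed module is a regular holonomic $\cD_{X_\lambda}$-module obtained from $\cO_{X_\lambda}$ by the functorial operations attached to the descended locally closed subsets, so by the Riemann--Hilbert correspondence for smooth complex varieties it corresponds to a perverse sheaf in $\Perv(X_\lambda^{\mathrm{an}})$ built from the constant sheaf through the operations $j_!$, $Rj_*$, restriction, and perverse truncation along those subsets. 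Fixing a Whitney stratification of $X_{\lambda_0}^{\mathrm{an}}$ adapted to the descended subsets and pulling it back along the smooth morphism $X_\lambda \to X_{\lambda_0}$, this perverse sheaf is constructible with respect to the resulting stratification of $X_\lambda^{\mathrm{an}}$; its composition factors are therefore intersection cohomology complexes of closures of strata, and a simple holonomic $\cD$-module has a single associated prime as an $\cO$-module, namely the one defining its support. Hence $\Ass_{\cO_{X_\lambda}}\bigl(T_\lambda(R_\lambda) \otimes_{\mathbf{Q}} \mathbf{C}\bigr)$ is contained in the set of prime ideals defining closures of strata; as the stratification of $X_\lambda$ is the pullback of the fixed one on $X_{\lambda_0}$, this set has cardinality at most the number $N$ of strata of the stratification of $X_{\lambda_0}$, independently of $\lambda$.

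It remains to descend the bound. Given distinct $\fq_1, \dots, \fq_m \in \Ass_R(T(R))$, choose elements of $T(R)$ realizing them as annihilators and lift these to a common finite level $\mu \ge \lambda_0$; using that $R$ is Noetherian and the $\fq_k$ finitely generated, one shows that for $\mu$ large enough the $\fq_k$ are already separated at level $\mu$ and give rise to $m$ distinct associated primes of $T_\mu(R_\mu)$, whence $m \le \#\Ass_{R_\mu}(T_\mu(R_\mu)) \le N$; so $\Ass_R(T(R))$ is finite. I expect the main obstacle to be precisely this passage to the limit — the \emph{delicate Noetherian approximation} — rather than any single finiteness statement: finiteness over each $R_\lambda$ is classical, but the presentation $R = \varinjlim_\lambda R_\lambda$ and the behaviour of associated primes under the (smooth, but substantial) transition maps must be controlled uniformly, and it is here that the Riemann--Hilbert correspondence is indispensable, since it replaces the $\cD_{R_\lambda}$-modules — whose module structures need not vary coherently across the system — by perverse sheaves that are pinned down by a single stratification descending from level $\lambda_0$.
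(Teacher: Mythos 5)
Your proposal identifies the right raw materials — N\'eron--Popescu desingularization, descent of Lyubeznik functors to a finite stage, base change to \(\CC\), Riemann--Hilbert, and finiteness of composition factors of perverse sheaves — but the way they are assembled has a genuine gap at its center. You claim that "one may further arrange (using generic smoothness in characteristic zero) that the transition maps \(R_\lambda \to R_\mu\) are smooth." N\'eron--Popescu produces a filtered colimit of \emph{smooth} \(\QQ\)-algebras, but says nothing about the transition maps, which in general are not smooth, not flat, and need not even be injective; replacing the \(R_\lambda\) by open loci to force generic smoothness would change the diagram and there is no reason the colimit would remain \(R\). Since your uniform bound \(N\) is obtained by pulling back a single Whitney stratification from \(X_{\lambda_0}\) along the maps \(X_\lambda \to X_{\lambda_0}\), the whole bound — and hence the finiteness statement — collapses without this. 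The paper handles this by first localizing at one associated point at a time (associated points and composition series of perverse sheaves both localize), so that the \(R_\lambda\) become essentially smooth \emph{local} \(\QQ\)-algebras of the \emph{same dimension} as \(R\); then the maps \(R_\lambda \to R_\mu \to R\) are flat by miracle flatness and have geometrically regular fibers by Hironaka, and it is this — not smoothness — which makes pullback \(t\)-exact for perverse sheaves via Fargues's theorem. Your final "descent of the bound" step has a similar gap: if \(\Ann_R(\xi_k) = \fq_k\), the lift \(\xi_{k,\mu}\) need not have annihilator \(\fq_k \cap R_\mu\), nor need its annihilator be prime; the paper again sidesteps this by localizing so the target prime is maximal and using Gabber's limit theorem to force the annihilator at level \(\lambda_0\) to be \(\fm_{\lambda_0}\).

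The paper's argument is also structurally different in a way worth noting: it never proves a uniform numerical bound on \(\#\Ass_{R_\lambda}(T_\lambda(R_\lambda))\) and then passes to the colimit. Instead, it defines an \'etale perverse \(\QQ_\ell\)-sheaf \(F_T\) directly on \(\Spec(R)\) (Definition \ref{def:perverselyu}) — this is possible because by Gabber, Fargues, and Morel the category \(\Perv(\Spec(R),\QQ_\ell)\) exists and is Artinian and Noetherian whenever \(R\) is excellent, regular, and of finite Krull dimension (Theorem \ref{thm:perv}). Theorem \ref{thm:assocaresimple} then shows \(\Ass_R(T(R))\) is contained in the set of generic points of supports of simple components of \(F_T\), and finiteness follows from the Artinian property alone; N\'eron--Popescu and Riemann--Hilbert are used only to prove this containment, one associated point at a time. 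Finally, your opening claim to prove a result subsuming Theorem \ref{thm:mainnew} for \emph{arbitrary} Lyubeznik functors overreaches what is in the paper: the mixed-characteristic result there is only for \(T = H^i_I\), and the reduction modulo \(p\) you describe as "routine" in fact requires a nontrivial Koszul-cohomology argument (Step 1 of the proof of Theorem \ref{thm:mainnewtext}) to see that only finitely many primes \(p\) contribute, together with Lyubeznik's \(F\)-module theory and the Blickle--B\"ockle finite-length theorem to handle each such \(p\).
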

Conjecture \ref{conj:lyubeznikfunctor} is known for local or differentiably
admissible rings over fields of characteristic zero
\citeleft\citen{Lyu93}\citemid Theorem 3.4\((c)\) and Remark
3.7\((i)\)\citepunct \citen{Lyu00}\citemid pp.\ 5880--5882\citepunct
\citen{NB13}\citemid Theorem 4.4\citeright, in equal
characteristic \(p > 0\) \cite[Corollary 2.14]{Lyu97}, and in mixed
characteristic for unramified regular local rings \cite[Theorem 1]{Lyu00} or for
compositions of local cohomology functors with closed support on
\(\ZZ[x_1,x_2,\ldots,x_n]\)
\cite[Theorem F]{HNBPW19}.\medskip
\par We prove the following new cases of Conjecture \ref{conj:lyubeznikfunctor}.
\begin{customthm}{\ref{thm:lyubeznikfunctors}}
  \label{thm:lyubeznikfunctorsnew}
  Let \(R\) be an excellent regular \(\QQ\)-algebra of finite Krull dimension.
  Let \(T(\,\cdot\,)\) be a Lyubeznik functor on \(R\).
  Then, the module \(T(R)\) has finitely many associated prime ideals.
\end{customthm}
Theorem \ref{thm:lyubeznikfunctorsnew} reproves the cases of
Conjecture \ref{conj:lyubeznikfunctor} proved in
\cite{Lyu93,NB13}.
\subsection{Outline}
In \S\ref{sect:exclyu}, we review the definitions of excellent rings
\cite{EGAIV2,Mat80} and of Lyubeznik functors \cite{Lyu93,NB13}.
In \S\ref{sect:perversesheaves}, we review
background material on
\(t\)-structures and on perverse sheaves on excellent schemes following
\cite{BBDG18,Gab04,Far09,ILO14,Mor25}.
The new result is Theorem \ref{thm:fargues}, which shows that pullback
by a regular morphism of excellent schemes is \(t\)-exact with respect to the
perverse \(t\)-structure after an appropriate shift.
In \S\ref{sect:algdmod}, we review background material on algebraic
\(\sD\)-modules and the Riemann--Hilbert correspondence.
Perverse sheaves and the Rieman--Hilbert correspondence
were previously applied to questions on local cohomology in
commutative algebra in \cite{BBLSZ}.\medskip
\par In \S\ref{sect:equichar0} and \S\ref{sect:mixedchar}, we prove Theorems
\ref{thm:lyubeznikfunctorsnew} and \ref{thm:mainnew}, respectively.
In both Theorems \ref{thm:mainnew} and \ref{thm:lyubeznikfunctorsnew},
the assumption that \(R\) is excellent and of finite Krull dimension are
used to ensure that the category of perverse sheaves on \(\Spec(R \otimes_\ZZ
\QQ)\)
is both Noetherian and Artinian.
See Theorem \ref{thm:perv}.
These finiteness properties are due to Be\u{\i}linson, Bernstein, Deligne, and
Gabber \cite{BBDG18}, Gabber \cite{Gab04}, Fargues \cite{Far09}, and Morel
\cite{Mor25} at various levels of generality and for various rings of
coefficients.
In particular, we will need Gabber's construction of the truncation functor
\(\prescript{\fp}{}{\tau}_{\le0}\) from \cite[\S6]{Gab04},
which we recall in Remark \ref{rem:gabbertruncate}.\medskip
\par In \S\ref{sect:equichar0},
we prove Theorem \ref{thm:lyubeznikfunctorsnew} by showing that the set of
associated prime ideals of \(T(R)\) is contained in the set of generic points of
locally closed subsets of \(\Spec(R)\) associated to the simple components of a
certain perverse sheaf on \(\Spec(R)\).
See Setup \ref{setup:approxperverse} for the definition of this perverse sheaf
and see Theorem \ref{thm:assocaresimple} for the exact comparison statement we
prove.
We show Theorem \ref{thm:assocaresimple} using
N\'eron--Popescu desingularization \citeleft\citen{Pop86}\citemid Theorem
2.4\citepunct \citen{Pop90}\citemid p.\ 45\citepunct\citen{Swa98}\citemid
Theorem 1.1\citeright, Noetherian approximation techniques \cite[\S8]{EGAIV3},
and the Riemann--Hilbert correspondence for smooth
complex varieties (Theorem \ref{thm:riemannhilbert}).
We use the inductive nature of the definition of Lyubeznik functors
(Definition \ref{def:lyubeznikfunctors}) in a crucial way.\medskip
\par In \S\ref{sect:mixedchar}, we prove Theorem \ref{thm:mainnew}.
Theorem \ref{thm:mainnew} follows from Theorem \ref{thm:lyubeznikfunctorsnew} by
using the fact that \(H^i_I(R/pR)\) has finite length as an
\(F\)-module for every prime number \(p\).
\(F\)-modules were introduced by Lyubeznik \cite{Lyu97}.
The finite length property we use (Theorem \ref{thm:fmodfinlen})
is due to 
Blickle and B\"ockle
\cite[Theorem 5.13]{BB11}, which is related to an earlier result due Lyubeznik
\cite[Theorem 3.2]{Lyu97}.
\subsection*{Acknowledgments}
I am grateful to
Rankeya Datta,
Daniel Le,
Yinan Nancy Wang,
Farrah Yhee,
and
Bogdan Zavyalov
for helpful
conversations.
I thank Bhargav Bhatt for insightful comments
on preliminary versions of this paper.

\subsection*{Conventions}
All rings are commutative with identity and all ring maps are unital.
A \textsl{variety} is an integral scheme that is separated and of finite type
over a field.
We will sometimes use set-theoretic notation when talking about inclusions or
intersections of categories.
\par When working with constructible complexes and perverse sheaves,
we denote by \(f_*,f^*,f_!,f^!\) the triangulated versions of the sheaf
operations.
In other words, we drop ``\(\mathrm{R}\)'' and ``\(\mathrm{L}\)'' from our
notation for these functors.
We use the \(\ell\)-adic formalism of \citeleft\citen{Eke90}\citepunct
\citen{Far09}\citemid \S5\citepunct \citen{ILO14}\citemid Expos\'e XIII,
\S4\citeright\ or \cite{BS15}, which match
in our situation by \cite[Proposition 5.5.4]{BS15}.

\section{Excellent rings and Lyubeznik functors}\label{sect:exclyu}
\subsection{Excellent rings}
We define excellent rings.
\begin{citeddef}[{\citeleft\citen{EGAIV2}\citemid D\'efinition 7.8.2\citepunct
  \citen{Mat80}\citemid (32.B), (33.A), and (34.A)\citeright}]\label{def:excellent}
  Let \(R\) be a ring.
  We say that \(R\) is \textsl{excellent} if it is Noetherian and it satisfies
  the following conditions.
  \begin{enumerate}[label=\((\roman*)\)]
    \item \(R\) is universally catenary.
    \item \(R\) is a \textsl{G-ring}, that is, for every prime ideal \(\fp \subseteq R\),
      the \(\fp R_\fp\)-adic completion map
      \(R_\fp \to \hat{R}_\fp\)
      has geometrically regular fibers.
    \item \(R\) is \textsl{J-2}, that is, for every module-finite \(R\)-algebra
    \(S\), the regular locus \(\Reg(S)\) in \(\Spec(S)\) is open.
  \end{enumerate}
\end{citeddef}
\subsection{Lyubeznik functors}
The following class of functors was introduced by Lyubeznik \cite{Lyu93} and
named in \cite{NB13}.
\begin{citeddef}[{\citeleft\citen{Lyu93}\citemid \S1\citepunct
  \citen{NB13}\citemid Definition 4.1\citeright}]\label{def:lyubeznikfunctors}
  Let \(X\) be a scheme.
  A \textsl{Lyubeznik functor \(T\) on \(X\)} is a functor of the form
  \( T = T_r \mathop{\circ} \cdots \mathop{\circ} T_2 \mathop{\circ} T_1 \)
  where each \(T_j\) is one of the following functors.
  \begin{enumerate}[label=\((\roman*)\)]
    \item \(\sH^{i_j}_{Y_j}(\,\cdot\,)\) for a locally
      closed subset \(Y_j\) of \(X\).
    \item The kernel of a morphism in the long exact sequence
      \begin{equation}\label{eq:lyubeznikfunctorles}
        \cdots \longrightarrow \sH^i_{Y'}(\,\cdot\,) \longrightarrow
        \sH^i_{Y}(\,\cdot\,)
        \longrightarrow \sH^i_{Y-Y'}(\,\cdot\,) \longrightarrow \cdots
      \end{equation}
      of local cohomology functors from \cite[Expos\'e I, Th\'eor\`eme 2.8]{SGA2}
      where \(Y\) is a locally closed subset of \(X\) and \(Y'\) is a
      closed subset of \(Y\).
  \end{enumerate}
  If \(X = \Spec(R)\) is affine, we say that \(T\) is a \textsl{Lyubeznik
  functor on \(R\)}.
  In this case, we denote by \(T(R)\) the global sections of \(T(\cO_{X})\).
  This is allowed since \(T(\cO_{X})\) is quasi-coherent
  \cite[Expos\'e II, Proposition 1]{SGA2}.
\end{citeddef}

\section{\texorpdfstring{{\normalfont\textit{t}}}{t}-structures and perverse
sheaves}\label{sect:perversesheaves}
We review preliminaries on perverse sheaves
\cite{BBDG18}.
Perverse sheaves
were previously applied
to commutative algebra, in particular to questions about local cohomology, in
\cite{BBLSZ}.
\subsection{Dimension and perversity functions}
\begin{citeddef}[{\citeleft\citen{ILO14}\citemid Expos\'e XIV, D\'efinition
  2.1.2\citeright}]
  Let \(X\) be a scheme.
  An \textsl{immediate Zariski specialization} \(x \rightsquigarrow y\) is a
  pair of points \(x,y\in X\) such that
  \(y \in \overline{\{x\}}\) and the closure of the image of \(x\) in
  \(\Spec(\cO_{X,y})\) is of dimension 1.
\end{citeddef}
\begin{citeddef}[{\cite[Expos\'e XIV, Proposition 2.1.6 and D\'efinition
  2.1.10]{ILO14}}]
  \label{def:dimfunction}
  Let \(X\) be a universally catenary Noetherian scheme.
  A \textsl{dimension function on \(X\)} is a function \(\delta\colon X \to
  \ZZ\) such that for every immediate Zariski specialization \(x \rightsquigarrow y\)
  of points in \(X\), we have \(\delta(y) = \delta(x) - 1\).
\end{citeddef}
\begin{example}\label{ex:codimdimfunction}
  Let \(X\) be an equidimensional universally catenary Noetherian scheme.
  \begin{enumerate}[label=\((\roman*)\),ref=\roman*]
    \item\label{ex:codimfunction}
      The function \(\delta(x) = -{\dim(\cO_{X,x})}\)
      is a dimension function on \(X\).
      If \(X\) is irreducible, this is proved in \cite[Expos\'e XIV, Proposition
      2.2.2]{ILO14}.
      In general, \(\delta(x) = -{\dim(\cO_{X,x})}\) restricts to a
      dimension function on each irreducible component of \(X\), and hence is a
      dimension function on all of \(X\).
      See also \cite[Proposition 5.5]{Hei17}.
    \item\label{ex:dimfunction}
      Suppose additionally that \(X\) is equicodimensional.
      The function \(\delta(x) = \dim(\overline{\{x\}})\)
      is a dimension function on \(X\) by \cite[Expos\'e XIV, Corollaire
      2.4.4]{ILO14} when \(X\) is irreducible and by \cite[Proposition 3.18]{BH22} in
      general.
  \end{enumerate}
\end{example}
\begin{citeddef}[{\cite[\S1]{Gab04}}]
  Let \(X\) be a Noetherian scheme.
  A \textsl{strong perversity function on \(X\)} is a function \(\fp\colon X \to
  \ZZ \cup \{+\infty\}\) such that for all \(x,y \in X\) with \(y \in
  \overline{\{x\}}\), we have \(\fp(y) \ge \fp(x)\).
\end{citeddef}
We recall the standard example of a strong perversity function.
\begin{example}[Middle perversity {\citeleft\citen{BBDG18}\citemid (2.1.16) and
  (2.2.11)\citeright}]\label{ex:middleperversity}
  Let \(X\) be an equidimensional universally catenary Noetherian scheme of finite Krull
  dimension.
  Setting
  \begin{align*}
    \fp_{1/2}(x) \coloneqq -\bigl(\dim(X)+\delta(x)\bigr) ={} &{\dim(\cO_{X,x})} -
    \dim(X)
  \intertext{for \(\delta\) as in Example
  \ref{ex:codimdimfunction}\((\ref{ex:codimfunction})\) yields a strong perversity
  function, which we call the \textsl{middle perversity}.
  If \(X\) is biequidimensional in the sense of \cite[Definition
  1.2\((ii)\)]{Hei17}, the function \(\fp_{1/2}\) satisfies}
    \fp_{1/2}(x) = -\delta(x) = -&{\dim\bigl(\overline{\{x\}}\bigr)}
  \end{align*}
  for \(\delta\) as in Example
  \ref{ex:codimdimfunction}\((\ref{ex:dimfunction})\) since
  the dimension formula
  holds on \(X\) \citeleft\citen{EGAIV1}\citemid Chapitre 0, Corollaire
  14.3.5\citepunct \citen{Hei17}\citemid Proposition 4.1\citeright.
\end{example}
\subsection{\emph{t}-structures}
We collect some preliminaries on \(t\)-structures following
\cite[\S\S1.2--1.3]{BBDG18} and the summary of the material in \cite{BBDG18}
from \cite[\S2.2]{Jut09}.
\begin{citeddef}[{\cite[D\'efinition 1.3.1]{BBDG18}}]
  A \textsl{\(t\)-category} is a triangulated category \(\cD\) equipped with a
  pair \((\cD^{\le0},\cD^{\ge0})\) of
  strictly full subcategories of \(\cD\) such that setting \(\cD^{\le n}
  \coloneqq \cD^{\le 0}[-n]\) and \(\cD^{\ge n} \coloneqq \cD^{\ge 0}[-n]\) for
  every integer \(n\), the following conditions hold.
  \begin{enumerate}[label=\((\roman*)\)]
    \item For every \(X\) in \(\cD^{\le 0}\) and every \(Y\) in \(\cD^{\ge 1}\),
      we have \(\Hom_\cD(X,Y) = 0\).
    \item We have \(\cD^{\le 0} \subseteq \cD^{\le 1}\) and \(\cD^{\ge 0}
      \supseteq \cD^{\ge 1}\).
    \item For every \(X\) in \(\cD\), there exists a distinguished triangle
      \begin{equation}\label{eq:bbdgdef131iii}
        A \longrightarrow X \longrightarrow B \xrightarrow{+1}
      \end{equation}
      where \(A\) is in \(\cD^{\le 0}\) and \(B\) is in \(\cD^{\ge 1}\).
  \end{enumerate}
  We say that \((\cD^{\le0},\cD^{\ge0})\) is a \textsl{\(t\)-structure} on
  \(\cD\).
  The \textsl{heart} of the \(t\)-structure \((\cD^{\le0},\cD^{\ge0})\) is the
  full subcategory
  \[
    \cD^\heartsuit \coloneqq \cD^{\le 0} \cap \cD^{\ge 0} \subseteq \cD.
  \]
\end{citeddef}
\begin{citedprop}[{\cite[Proposition 1.3.3 and Proposition 1.3.5]{BBDG18}}]
  Let \(\cD\) be a \(t\)-category.
  \begin{enumerate}[label=\((\roman*)\)]
    \item The inclusion of \(\cD^{\le n}\) (resp.\ \(\cD^{\ge n}\)) in \(\cD\)
      admits a right adjoint \(\tau_{\le n}\) (resp.\ a left adjoint \(\tau_{\ge
      n}\)).
    \item For all \(X\) in \(\cD\), there exists a unique morphism
      \(
        d \in \Hom_\cD(\tau_{\ge1}X,\tau_{\le0}X[1])
      \)
      such that
      \[
        \tau_{\le 0} X \longrightarrow X \longrightarrow \tau_{\ge 1} X
        \xrightarrow[\raisebox{3pt}{\(\scriptstyle+1\)}]{d}
      \]
      is a distinguished triangle.
      Up to unique isomorphism, this is the unique distinguished triangle
      \eqref{eq:bbdgdef131iii} with \(A\) in \(\cD^{\le 0}\) and \(B\) in
      \(\cD^{\ge1}\).
    \item Let \(a \le b\).
      For every \(X\) in \(\cD\), there exists a unique morphism
      \begin{equation}\label{eq:tauabxtaubax}
        \tau_{\ge a}\,\tau_{\le b}X \longrightarrow \tau_{\le b}\,\tau_{\ge a}X
      \end{equation}
      making the diagram
      \[
        \begin{tikzcd}[column sep=small]
          \tau_{\le b}X \rar \dar & X \rar & \tau_{\ge a}X\\
          \tau_{\ge a}\,\tau_{\le b}X \arrow{rr}{\sim}[swap]{\eqref{eq:tauabxtaubax}}
          & & \tau_{\le b}\,\tau_{\ge a}X \uar
        \end{tikzcd}
      \]
      commute.
      Moreover, the morphism \eqref{eq:tauabxtaubax} is an isomorphism.
  \end{enumerate}
\end{citedprop}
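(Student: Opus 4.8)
The plan is to derive all three parts from the three \(t\)-structure axioms, in the order \((i)\Rightarrow(ii)\Rightarrow(iii)\). For \((i)\): fix \(X\) and use axiom \((iii)\) to pick a distinguished triangle \(A \to X \to B \xrightarrow{+1}\) with \(A \in \cD^{\le 0}\) and \(B \in \cD^{\ge 1}\). The key point is that applying \(\Hom_\cD(Y,-)\) for an arbitrary \(Y \in \cD^{\le 0}\) and using axiom \((i)\) to kill \(\Hom_\cD(Y,B)\) and \(\Hom_\cD(Y,B[-1])\) --- both zero since \(B,B[-1]\in\cD^{\ge 1}\) --- gives an isomorphism \(\Hom_\cD(Y,A)\xrightarrow{\sim}\Hom_\cD(Y,X)\) induced by \(A\to X\) and natural in \(Y\). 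Thus \(A\to X\) is a coreflection of \(X\) into \(\cD^{\le 0}\); by the standard formalism of coreflective subcategories these coreflections assemble into a functor \(\tau_{\le 0}\) right adjoint to \(\cD^{\le 0}\hookrightarrow\cD\), with \(\tau_{\le 0}X\) determined up to a unique isomorphism compatible with its map to \(X\). Dually, applying \(\Hom_\cD(-,Y)\) for \(Y\in\cD^{\ge 1}\) and killing \(\Hom_\cD(A,Y)\) and \(\Hom_\cD(A[1],Y)\) by axiom \((i)\) exhibits \(X\to B\) as a reflection of \(X\) into \(\cD^{\ge 1}\), giving the left adjoint \(\tau_{\ge 1}\). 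The general \(\tau_{\le n},\tau_{\ge n}\) follow by shifting, since \(\cD^{\le n}=\cD^{\le 0}[-n]\) and \(\cD^{\ge n}=\cD^{\ge 0}[-n]\).

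For \((ii)\): by \((i)\) the chosen triangle is \(\tau_{\le 0}X\to X\to\tau_{\ge 1}X\xrightarrow{d}\) for some \(d\in\Hom_\cD(\tau_{\ge 1}X,\tau_{\le 0}X[1])\). If \(d'\) also completes the triangle, the axiom on morphisms of distinguished triangles (applied to the identity on \(\tau_{\le 0}X\) and on \(X\)) yields an automorphism \(\varphi\) of \(\tau_{\ge 1}X\) with \(d'\varphi=d\) and \(\varphi\circ(X\to\tau_{\ge 1}X)=(X\to\tau_{\ge 1}X)\); the latter forces \(\varphi-\id\) to factor through \(d\) followed by a map \(\tau_{\le 0}X[1]\to\tau_{\ge 1}X\), and \(\Hom_\cD(\tau_{\le 0}X[1],\tau_{\ge 1}X)=0\) by axiom \((i)\) since \(\tau_{\le 0}X[1]\in\cD^{\le -1}\subseteq\cD^{\le 0}\), so \(\varphi=\id\) and \(d=d'\). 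For the ``moreover'' clause, any distinguished triangle \eqref{eq:bbdgdef131iii} with \(A\in\cD^{\le 0}\), \(B\in\cD^{\ge 1}\) has \(A\cong\tau_{\le 0}X\) and \(B\cong\tau_{\ge 1}X\) canonically by the universal properties in \((i)\), after which the connecting map is pinned down by the uniqueness just shown.

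For \((iii)\): one first checks that \(\tau_{\ge a}\tau_{\le b}X\) and \(\tau_{\le b}\tau_{\ge a}X\) both lie in \(\cD^{\ge a}\cap\cD^{\le b}\); e.g.\ \(\tau_{\ge a}\) preserves \(\cD^{\le b}\) because in the canonical triangle \(\tau_{\le a-1}\tau_{\le b}X\to\tau_{\le b}X\to\tau_{\ge a}\tau_{\le b}X\xrightarrow{+1}\) the first two terms lie in \(\cD^{\le b}\) (using \(a\le b\), stability of \(\cD^{\le b}\) under \([-1]\) and extensions, and \(\tau_{\le a-1}\tau_{\le b}X=\tau_{\le a-1}X\)), with a dual argument for \(\tau_{\le b}\tau_{\ge a}X\). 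Next, \(\tau_{\le b}X\to X\to\tau_{\ge a}X\) has target in \(\cD^{\ge a}\), so by the reflection property of \(\tau_{\ge a}\) it factors uniquely as \(\tau_{\le b}X\to\tau_{\ge a}\tau_{\le b}X\xrightarrow{\alpha}\tau_{\ge a}X\); since \(\tau_{\ge a}\tau_{\le b}X\in\cD^{\le b}\), the coreflection property of \(\tau_{\le b}\) factors \(\alpha\) uniquely as \(\tau_{\ge a}\tau_{\le b}X\to\tau_{\le b}\tau_{\ge a}X\to\tau_{\ge a}X\), and the first map is \eqref{eq:tauabxtaubax}; unraveling definitions, the square in the statement commutes precisely for this map and no other. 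It remains to prove \eqref{eq:tauabxtaubax} is an isomorphism: applying the octahedral axiom to the composable pair \(\tau_{\le a-1}X\to\tau_{\le b}X\to X\) gives a distinguished triangle on the three cofibers identifying \(\tau_{\ge a}\tau_{\le b}X\) with the fiber of a canonical map \(\tau_{\ge a}X\to\tau_{\ge b+1}X\), while the canonical triangle \(\tau_{\le b}\tau_{\ge a}X\to\tau_{\ge a}X\to\tau_{\ge b+1}\tau_{\ge a}X\xrightarrow{+1}\) (with \(\tau_{\ge b+1}\tau_{\ge a}X=\tau_{\ge b+1}X\)) identifies \(\tau_{\le b}\tau_{\ge a}X\) with the fiber of the same map; the comparison isomorphism between these fibers must agree with \eqref{eq:tauabxtaubax} by the uniqueness just established. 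The main obstacle is the diagram-chasing in this last step: one must check that the octahedron's base map \(\tau_{\ge a}X\to\tau_{\ge b+1}X\) is literally the canonical one in the second triangle, and that the induced comparison is compatible with the commuting square --- that is where I would concentrate the bookkeeping.
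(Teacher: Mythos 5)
This proposition is a background result that the paper cites from \cite[Propositions 1.3.3 and 1.3.5]{BBDG18} without reproving it; there is no ``paper's own proof'' to compare against. Your proof sketch is essentially the standard argument from \cite{BBDG18}: for \((i)\) you construct \(\tau_{\le 0}\) and \(\tau_{\ge 1}\) as (co)reflections from axiom \((iii)\) plus the long exact \(\Hom\)-sequences killed by axiom \((i)\), and pass to general \(n\) by shifting; for \((ii)\) you use TR3 and the vanishing \(\Hom(\cD^{\le 0},\cD^{\ge 1})=0\) to pin down \(d\); for \((iii)\) you use the universal properties to produce the unique comparison map and then the octahedral axiom on \(\tau_{\le a-1}X \to \tau_{\le b}X \to X\) to show it is an isomorphism. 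All of this is correct. Two small points worth making explicit if you write this out in full: in part \((iii)\), the claim that \(\tau_{\ge a}\tau_{\le b}X \in \cD^{\le b}\) uses that \(\cD^{\le b}\) is closed under extensions and under retracts, which is not literally an axiom --- it follows from the characterization \(X \in \cD^{\le b} \Leftrightarrow \Hom(X,Y)=0\) for all \(Y \in \cD^{\ge b+1}\), which in turn is proved by applying \(\Hom(-,\tau_{\ge b+1}X)\) to the canonical triangle to force \(\id_{\tau_{\ge b+1}X}=0\); and the identification \(\tau_{\ge b+1}\tau_{\ge a}X \cong \tau_{\ge b+1}X\) (which you invoke in passing) needs a similar one-line argument. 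You already flag the remaining octahedron bookkeeping as the place to concentrate, which is the right call: one must check the base map of the octahedron agrees with the canonical map \(\tau_{\ge a}X \to \tau_{\ge b+1}X\), after which uniqueness from the universal properties forces the comparison to agree with \eqref{eq:tauabxtaubax}.
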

\begin{citedprop}[{\cite[Proposition 1.2.2]{BBDG18}}]
  Let \(\cD\) be a triangulated category and let \(\cC\) be a full subcategory
  of \(\cD\) such that \(\Hom(X,Y[i]) = 0\) for all \(i < 0\) and \(X,Y\) in
  \(\cC\).
  Let \(f\colon X \to Y\) be a morphism of \(\cC\) and consider a distinguished
  triangle
  \[
    X \overset{f}{\longrightarrow} Y \longrightarrow S \xrightarrow{+1}
  \]
  containing \(f\).
  Suppose that \(S\) fits into a distinguished triangle
  \[
    N[1] \longrightarrow S \longrightarrow C \xrightarrow{+1}
  \]
  where \(N\) and \(C\) are in \(\cC\).
  We then obtain the diagram
  \begin{equation}\label{eq:bbdg1221}
    \begin{tikzcd}[column sep={between origins,4em},row
      sep=scriptsize,baseline=(S.base)]
      N[1]
      \arrow[dd,"\vphantom{\beta}\alpha"',""{name=leftcomm}] \arrow[dr] & & C
      \arrow[ll,"+1"'{pos=0.545},""{name=top1,pos=0.545}]\\
      &|[alias=S]| S \arrow[ur] \arrow[dl,"+1"'{sloped}]\\
      X \arrow[rr,"f"',""{name=bottom1}]
      & & Y \arrow[ul]
      \arrow[uu,"\beta"',""{name=rightcomm,yshift=-1.25pt}]
      \arrow[phantom,from={S.north},to=top1,"\scriptstyle \bigstar"{description,pos=0.55}]
      \arrow[phantom,from={S.south},to=bottom1,"\scriptstyle \bigstar"{description,pos=0.55}]
      \arrow[phantom,from=S,to=leftcomm,"\textstyle \circlearrowright"{description,pos=0.6}]
      \arrow[phantom,from=S,to=rightcomm,"\textstyle \circlearrowright"{description,pos=0.6}]
    \end{tikzcd}
  \end{equation}
  where the triangles marked with \(\circlearrowright\) commute and the triangles
  marked with \(\bigstar\) are distinguished.
  Then, \(\alpha[-1]\colon N \to X\) and \(\beta\colon Y \to C\) are
  respectively a kernel and a cokernel for \(f\) in \(\cC\).
\end{citedprop}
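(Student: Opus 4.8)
The plan is to check the kernel and cokernel universal properties directly, by testing against an arbitrary object $W$ of $\cC$ and combining the long exact $\Hom$-sequences of the two given distinguished triangles with the vanishing condition that defines $\cC$. Because $\cC$ is a full subcategory of $\cD$, the maps $\alpha[-1]\colon N \to X$ and $\beta\colon Y \to C$ are automatically morphisms of $\cC$, so only the universal properties need to be verified; and since the hypothesis ``$\Hom(X,Y[i]) = 0$ for all $i<0$ and $X,Y$ in $\cC$'' is self-dual, the cokernel statement will follow from the kernel statement by the evident dual argument (replace $\Hom_\cD(W,-)$ by $\Hom_\cD(-,W)$ throughout). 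So it suffices to show that for every $W$ in $\cC$ the map $(\alpha[-1])_*\colon \Hom_\cD(W,N) \to \Hom_\cD(W,X)$ is injective with image $\ker\bigl(f_*\colon \Hom_\cD(W,X) \to \Hom_\cD(W,Y)\bigr)$, which is precisely what it means for $N \xrightarrow{\alpha[-1]} X$ to be a kernel of $f$ in $\cC$.

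Now fix $W$ in $\cC$. First I would apply $\Hom_\cD(W,-)$ to the triangle obtained by shifting $N[1] \to S \to C \xrightarrow{+1}$ by $[-1]$: since $W$ and $C$ lie in $\cC$, the terms $\Hom_\cD(W,C[-1])$ and $\Hom_\cD(W,C[-2])$ both vanish, so the long exact sequence forces the map $N[1] \to S$ of the second triangle to induce an isomorphism $\Hom_\cD(W,N) \xrightarrow{\sim} \Hom_\cD(W,S[-1])$. Next I would apply $\Hom_\cD(W,-)$ to the triangle $X \xrightarrow{f} Y \to S \xrightarrow{+1} X[1]$, obtaining the exact sequence
\[ \Hom_\cD(W,Y[-1]) \longrightarrow \Hom_\cD(W,S[-1]) \xrightarrow{\partial} \Hom_\cD(W,X) \xrightarrow{f_*} \Hom_\cD(W,Y), \]
in which $\partial$ is induced by the shift $S[-1] \to X$ of the map $S \to X[1]$ of the first triangle. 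Here $\Hom_\cD(W,Y[-1]) = 0$ because $W$ and $Y$ lie in $\cC$, so $\partial$ is injective; and by the way the diagram \eqref{eq:bbdg1221} is built, the isomorphism $\Hom_\cD(W,N) \xrightarrow{\sim} \Hom_\cD(W,S[-1])$ followed by $\partial$ is exactly $(\alpha[-1])_*$, because $\alpha$ is the composite $N[1] \to S \to X[1]$. It follows that $(\alpha[-1])_*$ is injective with image $\ker(f_*)$, as needed; the dual argument then gives that $\beta\colon Y \to C$ is a cokernel of $f$ in $\cC$. In particular, taking $W = N$ and evaluating on $\id_N$ recovers $f \circ \alpha[-1] = 0$, and likewise $\beta \circ f = 0$.

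The point requiring care is the bookkeeping inside the octahedral diagram \eqref{eq:bbdg1221}: one must confirm that $\alpha$ is the composite $N[1] \to S \to X[1]$ and $\beta$ the composite $Y \to S \to C$, with the correct signs, so that the abstract exactness conclusions above really are statements about the specific morphisms $\alpha[-1]$ and $\beta$ named in the proposition, rather than merely about some kernel and cokernel of $f$. I expect this shift- and sign-tracking, and not any of the homological algebra, to be the main though routine obstacle; once it is settled, everything else is a formal consequence of the exactness of $\Hom_\cD(W,-)$ and $\Hom_\cD(-,W)$ along distinguished triangles.
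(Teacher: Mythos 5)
Your argument is correct, and it is essentially the standard proof of \cite[Proposition 1.2.2]{BBDG18}; the paper simply cites the result from BBDG18 without reproducing a proof, so there is nothing in the paper to diverge from. You reduce the kernel universal property to showing that, for every $W$ in $\cC$, the map $(\alpha[-1])_*\colon \Hom(W,N) \to \Hom(W,X)$ is injective with image $\ker(f_*)$; the two long exact $\Hom$-sequences combined with the vanishing $\Hom(W,C[-1]) = \Hom(W,C[-2]) = \Hom(W,Y[-1]) = 0$ give exactly this, and the identification of the composite $\Hom(W,N) \xrightarrow{\sim} \Hom(W,S[-1]) \xrightarrow{\partial} \Hom(W,X)$ with $(\alpha[-1])_*$ is correct because $\alpha$ is, by the commutativity of the left triangle in \eqref{eq:bbdg1221}, the composite $N[1] \to S \to X[1]$. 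The dual argument with $\Hom(-,W)$ handles $\beta$, and fullness of $\cC$ lets you pass freely between $\Hom_\cC$ and $\Hom_\cD$. This is the intended argument.
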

\begin{citeddef}[{\cite[(1.2.3)]{BBDG18}}]
  Let \(\cD\) be a triangulated category and let \(\cC\) be a full subcategory
  of \(\cD\) such that \(\Hom(X,Y[i]) = 0\) for all \(i < 0\) and \(X,Y\) in
  \(\cC\).
  A morphism \(f\colon X \to Y\) of \(\cC\) is \textsl{\(\cC\)-admissible} or
  simply \textsl{admissible} if it appears as the bottom horizontal morphism
  in a diagram of the form
  \eqref{eq:bbdg1221}.
  For every distinguished triangle
  \[
    X \overset{f}{\longrightarrow} Y \overset{g}{\longrightarrow} Z
    \xrightarrow[\raisebox{3pt}{\(\scriptstyle+1\)}]{d}
  \]
  such that \(X,Y,Z\) are in \(\cC\), the morphisms \(f,g\) are
  admissible, \(f\) is a kernel for \(g\), \(g\) is a cokernel for \(f\), and
  \(d\) is uniquely determined by \(f\) and \(g\).
  \par A sequence \(X \to Y \to Z\) in \(\cC\) is an \textsl{admissible short
  exact sequence} if it can be obtained from a distinguished triangle by
  suppressing the degree \(1\) morphism \(d\colon Z \to X[1]\).
\end{citeddef}
\begin{citedprop}[{\cite[Proposition 1.2.4]{BBDG18}}]\label{prop:bbdg124}
  Let \(\cD\) be a triangulated category and let \(\cC\) be a full subcategory
  of \(\cD\) such that \(\Hom(X,Y[i]) = 0\) for all \(i < 0\) and \(X,Y\) in
  \(\cC\).
  Suppose that \(\cC\) is stable under finite direct sums.
  Then, the following conditions are equivalent.
  \begin{enumerate}[label=\((\roman*)\)]
    \item \(\cC\) is Abelian and its short exact sequences are admissible.
    \item Every morphism of \(\cC\) is \(\cC\)-admissible.
  \end{enumerate}
\end{citedprop}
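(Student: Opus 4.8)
The plan is to prove the two implications separately, using in both directions the kernel--cokernel construction of \cite[Proposition~1.2.2]{BBDG18} together with the octahedral axiom in \(\cD\). First I would record that \(\cC\) is additive: it is a full subcategory of the additive category \(\cD\), it is stable under finite direct sums, and in particular it contains a zero object, so the abelian-group structure on \(\Hom\)-sets and the biadditivity of composition are inherited from \(\cD\). Both implications then reduce to manipulations of distinguished triangles all of whose vertices lie in \(\cC\), for which \cite[Proposition~1.2.2]{BBDG18} and the facts stated in the definition of \(\cC\)-admissible morphisms above do all the work.

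For \((i)\Rightarrow(ii)\), let \(f\colon X\to Y\) be a morphism of \(\cC\). Since \(\cC\) is Abelian, \(f\) factors as \(X\xrightarrow{q}\im(f)\xrightarrow{j}Y\) with \(q\) an epimorphism and \(j\) a monomorphism, and the short exact sequences \(0\to\ker(f)\to X\xrightarrow{q}\im(f)\to0\) and \(0\to\im(f)\xrightarrow{j}Y\to\operatorname{coker}(f)\to0\) are, by hypothesis, admissible; hence they come from distinguished triangles in \(\cD\), so a cone of \(q\) is \(\ker(f)[1]\) and a cone of \(j\) is \(\operatorname{coker}(f)\). Applying the octahedral axiom to the composable pair \(X\xrightarrow{q}\im(f)\xrightarrow{j}Y\), whose composite is \(f\), produces a distinguished triangle \(\ker(f)[1]\to\operatorname{cone}(f)\to\operatorname{coker}(f)\xrightarrow{+1}\). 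Thus \(\operatorname{cone}(f)\) fits into a distinguished triangle \(N[1]\to\operatorname{cone}(f)\to C\xrightarrow{+1}\) with \(N=\ker(f)\) and \(C=\operatorname{coker}(f)\) in \(\cC\), which is exactly the input of \cite[Proposition~1.2.2]{BBDG18}; the diagram \eqref{eq:bbdg1221} it produces has \(f\) as its bottom horizontal arrow, so \(f\) is \(\cC\)-admissible.

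For \((ii)\Rightarrow(i)\), every morphism of \(\cC\) is admissible, so by \cite[Proposition~1.2.2]{BBDG18} it has a kernel and a cokernel computed in \(\cC\); it remains to show that monomorphisms are kernels, that epimorphisms are cokernels, and that short exact sequences are admissible. If \(f\colon X\to Y\) is a monomorphism, then \cite[Proposition~1.2.2]{BBDG18} identifies the object \(N\) in the triangle \(N[1]\to\operatorname{cone}(f)\to C\xrightarrow{+1}\) coming from a diagram \eqref{eq:bbdg1221} for \(f\) with \(\ker(f)\); since a kernel is a monomorphism and a morphism that is both a monomorphism and zero has a zero source, \(f\) monic forces \(\ker(f)=0\), whence \(N=0\) and \(\operatorname{cone}(f)\cong C\in\cC\). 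Then \(X\xrightarrow{f}Y\to\operatorname{cone}(f)\xrightarrow{+1}\) is a distinguished triangle with all vertices in \(\cC\), so by the properties of admissible morphisms stated in the definition above, \(f\) is a kernel of \(Y\to\operatorname{cone}(f)\); dually, rotating the triangle so that \(\ker(g)\) appears alongside \(Y\) and \(Z\), every epimorphism \(g\colon Y\to Z\) is a cokernel, and \(\cC\) is Abelian. Finally, for a short exact sequence \(0\to X\xrightarrow{f}Y\xrightarrow{g}Z\to0\) in \(\cC\), the distinguished triangle \(X\xrightarrow{f}Y\to\operatorname{cone}(f)\xrightarrow{+1}\) just produced has \(\operatorname{cone}(f)\) a cokernel of \(f\); as \(g\) is also a cokernel of \(f\), the unique compatible isomorphism \(\operatorname{cone}(f)\xrightarrow{\sim}Z\) turns this into \(X\xrightarrow{f}Y\xrightarrow{g}Z\xrightarrow{+1}\), exhibiting the sequence as admissible.

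The main obstacle I anticipate is purely bookkeeping inside \(\cD\): in \((i)\Rightarrow(ii)\) one must check that the triangle delivered by the octahedral axiom is genuinely the one required by \cite[Proposition~1.2.2]{BBDG18} (matching the cones \emph{and} the maps among them up to the relevant isomorphisms), and in \((ii)\Rightarrow(i)\) one must verify that ``\(f\) monic'' really forces \(\ker(f)\) to be a zero object and that the cokernel identifications glue coherently with the data of the given short exact sequence. No ingredient beyond \cite[Proposition~1.2.2]{BBDG18}, the octahedral axiom, and elementary facts about Abelian categories enters, so once these identifications are in place the argument is formal.
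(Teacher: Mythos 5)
This statement is a \emph{cited} proposition, labeled \texttt{citedprop} and attributed to \cite[Proposition~1.2.4]{BBDG18}; the present paper reproduces the statement but supplies no proof of its own, so there is no in-paper argument to compare against. Your proposal is, on its own merits, a correct proof, and it follows the route one would expect from BBDG: for \((i)\Rightarrow(ii)\) you factor \(f\) through its image, use admissibility of the two short exact sequences to realize the cones of \(q\) and \(j\) as \(\ker(f)[1]\) and \(\operatorname{coker}(f)\), and feed the octahedron for \(X\to\im(f)\to Y\) into \cite[Proposition~1.2.2]{BBDG18}; for \((ii)\Rightarrow(i)\) you extract kernels and cokernels from \cite[Proposition~1.2.2]{BBDG18}, argue that a monic \(f\) forces \(N\cong 0\) (a zero monomorphism has zero source) so that the triangle on \(f\) lands entirely in \(\cC\), and then dualize and identify \(\operatorname{cone}(f)\) with \(Z\) to make a given short exact sequence admissible. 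The only small point worth tightening is the claim that stability under finite direct sums already yields the zero object; either read the empty direct sum as included, or, in direction \((ii)\Rightarrow(i)\), observe directly that \(0=\ker(\id_X)\in\cC\). Apart from that and the octahedron bookkeeping you yourself flag, the argument is complete.
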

\begin{citeddef}[{\cite[D\'efinition 1.2.5]{BBDG18}}]
  Let \(\cD\) be a triangulated category and let \(\cC\) be a full subcategory
  of \(\cD\).
  We say that \(\cC\) is an \textsl{admissible Abelian subcategory} of \(\cD\)
  if the following conditions hold.
  \begin{enumerate}[label=\((\roman*)\)]
    \item \(\Hom(X,Y[i]) = 0\) for all \(i < 0\) and \(X,Y\) in \(\cC\).
    \item \(\cC\) is stable under finite direct sums.
    \item \(\cC\) satisfies one of the equivalent conditions in Proposition
      \ref{prop:bbdg124}.
  \end{enumerate}
\end{citeddef}
\begin{citedthm}[{\cite[Th\'eor\`eme 1.3.6]{BBDG18}}]\label{thm:bbdg136}
  Let \(\cD\) be a \(t\)-category.
  The heart \(\cD^\heartsuit\) of \(\cD\) is an admissible Abelian subcategory
  of \(\cD\) that is stable under extensions.
  The functor
  \[
    \cH^0 \coloneqq \tau_{\ge0}\,\tau_{\le0} \colon \cD \longrightarrow
    \cD^\heartsuit
  \]
  is a cohomological functor.
\end{citedthm}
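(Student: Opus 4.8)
The plan is to verify in turn the three axioms of an admissible Abelian subcategory (Definition 1.2.5 of \cite{BBDG18}), then stability under extensions, and finally the cohomological property of $\cH^0$; the last of these will be the main obstacle.

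First I would record two standard consequences of the $t$-category axioms for use throughout. For every integer $n$, the subcategory $\cD^{\le n}$ (resp.\ $\cD^{\ge n}$) is stable under finite direct sums and under extensions in distinguished triangles. For direct sums, using the characterization $X \in \cD^{\le 0}$ if and only if $\tau_{\ge 1}X = 0$, which follows from the uniqueness assertion in \cite[Proposition 1.3.3]{BBDG18}, together with additivity of the left adjoint $\tau_{\ge 1}$, one gets $\tau_{\ge 1}(X \oplus Y) \cong \tau_{\ge 1}X \oplus \tau_{\ge 1}Y$, which vanishes when $X,Y \in \cD^{\le 0}$. For extensions, applying $\Hom_\cD(-,W)$ to a distinguished triangle two of whose vertices lie in $\cD^{\le n}$, with $W \in \cD^{\ge n+1}$, and invoking axiom (i) of a $t$-category after shifting, shows the remaining vertex lies in $\cD^{\le n}$ as well; the argument for $\cD^{\ge n}$ is dual. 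I would also use that for $a \le b$ the functor $\tau_{\le b}$ preserves $\cD^{\ge a}$ and $\tau_{\ge a}$ preserves $\cD^{\le b}$, which is immediate from the isomorphism \eqref{eq:tauabxtaubax}.

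Granting these, the vanishing $\Hom_\cD(X,Y[i]) = 0$ for $i < 0$ and $X,Y \in \cD^\heartsuit$ is immediate, since $Y[i] \in \cD^{\ge -i} \subseteq \cD^{\ge 1}$ while $X \in \cD^{\le 0}$; and stability of $\cD^\heartsuit = \cD^{\le 0} \cap \cD^{\ge 0}$ under finite direct sums and under extensions is inherited from $\cD^{\le 0}$ and $\cD^{\ge 0}$. By Proposition~\ref{prop:bbdg124}, it then remains to show that every morphism $f\colon X \to Y$ of $\cD^\heartsuit$ is $\cD^\heartsuit$-admissible. Complete $f$ to a distinguished triangle $X \xrightarrow{f} Y \to S \xrightarrow{+1}$. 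Feeding the triangles $X \to Y \to S$ and $Y \to S \to X[1]$ into $\Hom_\cD(-,W)$ with $W \in \cD^{\ge 1}$, and into $\Hom_\cD(Z,-)$ with $Z \in \cD^{\le -2}$, and using the $\Hom$-vanishing axiom after appropriate shifts, one finds $S \in \cD^{\le 0} \cap \cD^{\ge -1}$. Then in the truncation triangle $\tau_{\le -1}S \to S \to \tau_{\ge 0}S \xrightarrow{+1}$ we have $\tau_{\ge 0}S \in \cD^{\le 0} \cap \cD^{\ge 0} = \cD^\heartsuit$ and $\tau_{\le -1}S \in \cD^{\le -1} \cap \cD^{\ge -1} = \cD^\heartsuit[1]$, so with $C \coloneqq \tau_{\ge 0}S$ and $N \coloneqq (\tau_{\le -1}S)[-1]$ we have exhibited $S$ in a distinguished triangle $N[1] \to S \to C \xrightarrow{+1}$ with $N, C \in \cD^\heartsuit$. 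This is exactly the hypothesis of \cite[Proposition 1.2.2]{BBDG18}, so $f$ fits into a diagram of the form \eqref{eq:bbdg1221} and is therefore admissible; Proposition~\ref{prop:bbdg124} then shows that $\cD^\heartsuit$ is Abelian with admissible short exact sequences, so $\cD^\heartsuit$ is an admissible Abelian subcategory of $\cD$ that is stable under extensions.

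Finally, $\cH^0 = \tau_{\ge 0}\tau_{\le 0} \cong \tau_{\le 0}\tau_{\ge 0}$ by \eqref{eq:tauabxtaubax}, so it takes values in $\cD^{\le 0} \cap \cD^{\ge 0} = \cD^\heartsuit$, and by the definition of a cohomological functor it suffices to show that for every distinguished triangle $X \to Y \to Z \xrightarrow{+1}$ the sequence $\cH^0(X) \to \cH^0(Y) \to \cH^0(Z)$ is exact, the full long exact cohomology sequence with $\cH^n \coloneqq \cH^0 \circ [n]$ then following by rotating the triangle. I expect this to be the delicate part: one first notes that $\cH^0$ annihilates $\cD^{\le -1}$ and $\cD^{\ge 1}$, passes to the truncations $\tau_{\le 0}$ and $\tau_{\ge 0}$ of the terms to reduce to a bounded range of degrees, and then runs a diagram chase built from the octahedral axiom together with the functoriality and uniqueness of the truncation triangles of \cite[Propositions 1.3.3 and 1.3.5]{BBDG18}, using that kernels and cokernels in $\cD^\heartsuit$ are computed from the admissible short exact sequences produced above. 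This completes the proof.
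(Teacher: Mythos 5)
This is a cited theorem (\cite[Th\'eor\`eme 1.3.6]{BBDG18}) that the paper states without proof, so there is no in-paper argument to compare against; what can be judged is whether your reconstruction is complete. The abelian-subcategory part is correct and follows the standard BBDG route: the preliminary stability observations for $\cD^{\le n}$ and $\cD^{\ge n}$, the orthogonality computation placing the cone $S$ of $f$ in $\cD^{\le 0} \cap \cD^{\ge -1}$, the identification of $\tau_{\ge 0}S$ as an object of $\cD^\heartsuit$ and of $\tau_{\le -1}S$ as an object of $\cD^\heartsuit[1]$, and the final appeal to Propositions 1.2.2 and 1.2.4 are exactly what is needed, as is the observation that stability of the heart under extensions is inherited from $\cD^{\le 0}$ and $\cD^{\ge 0}$.

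The cohomological-functor assertion, however, is not proved; you write that you \emph{expect} a ``diagram chase built from the octahedral axiom'' to work, but you do not carry it out, and this is the most delicate part of Th\'eor\`eme 1.3.6. As stated the sketch has no force, because the truncation functors $\tau_{\le 0}$ and $\tau_{\ge 0}$ do not send distinguished triangles to distinguished triangles, so there is no long exact sequence in $\cD^\heartsuit$ to chase a priori. What is missing is the pivotal exactness lemma around which the BBDG argument is organized: for a distinguished triangle $A \to B \to C \xrightarrow{+1}$ with $A,B,C \in \cD^{\ge 0}$, the sequence $0 \to \cH^0(A) \to \cH^0(B) \to \cH^0(C)$ is exact in $\cD^\heartsuit$. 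One proves this by applying $\Hom_\cD(W,-)$ for $W \in \cD^\heartsuit$, using the vanishing $\Hom_\cD(W,C[-1])=0$ (since $C[-1] \in \cD^{\ge 1}$), the adjunction isomorphism $\Hom_\cD(W,M) \cong \Hom_{\cD^\heartsuit}(W,\cH^0 M)$ valid for $W \in \cD^\heartsuit$ and $M \in \cD^{\ge 0}$, and the fact that exactness in the now-known-to-be-abelian heart can be tested against $\Hom_{\cD^\heartsuit}(W,-)$; the dual lemma handles triangles in $\cD^{\le 0}$. Only then does one reduce the general triangle to these cases by two applications of the octahedral axiom, replacing $X$ by $\tau_{\ge 0}X$ and $Z$ by $\tau_{\le 0}Z$ and splicing the resulting short exact sequences. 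Without this lemma and the explicit reduction, the proposal does not establish that $\cH^0$ is cohomological.
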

We also define \(t\)-exact functors.
\begin{citeddef}[{\cite[(1.3.16)]{BBDG18}}]
  Let \(\cD_1,\cD_2\) be two \(t\)-categories.
  Let \(T\colon \cD_1 \to \cD_2\) be an exact functor of the underlying
  triangulated categories.
  \begin{enumerate}[label=\((\roman*)\)]
    \item We say that \(T\) is \textsl{right \(t\)-exact} if \(T(\cD_1^{\le0}) \subseteq
      \cD_2^{\le 0}\).
    \item We say that \(T\) is \textsl{left \(t\)-exact} if \(T(\cD_1^{\ge0}) \subseteq
      \cD_2^{\ge 0}\).
    \item We say that \(T\) is \textsl{\(t\)-exact} if \(T\) is both right
      \(t\)-exact and left \(t\)-exact.
  \end{enumerate}
\end{citeddef}
\begin{citedprop}[{\cite[Proposition 1.3.17\((i)\)]{BBDG18}}]
  Let \(\cD_1,\cD_2\) be two \(t\)-categories.
  Denote the inclusion functors by \(\varepsilon\colon \cD_i^\heartsuit
  \hookrightarrow \cD_i\).
  Let \(T\colon \cD_1 \to \cD_2\) be an exact functor of the underlying
  triangulated categories.
  If \(T\) is left (resp.\ right) \(t\)-exact, then the additive functor
  \[
    \prescript{\fp}{}{T} \coloneqq \cH^0 \mathop{\circ} T \mathop{\circ} \varepsilon\colon
    \cD_1^\heartsuit \longrightarrow \cD_2^\heartsuit
  \]
  is left (resp.\ right) exact.
\end{citedprop}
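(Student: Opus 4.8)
The plan is to deduce left (resp.\ right) exactness of \(\prescript{\fp}{}{T}\) from the long exact cohomology sequence attached to a distinguished triangle, using Theorem~\ref{thm:bbdg136}. Suppose first that \(T\) is left \(t\)-exact, and let \(0 \to A \to B \to C \to 0\) be a short exact sequence in \(\cD_1^\heartsuit\). By Theorem~\ref{thm:bbdg136} the heart \(\cD_1^\heartsuit\) is an admissible Abelian subcategory of \(\cD_1\), so this is an admissible short exact sequence and hence arises from a distinguished triangle
\[
  A \longrightarrow B \longrightarrow C \xrightarrow{+1}
\]
in \(\cD_1\) by suppressing the degree \(1\) morphism.

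Next I would apply \(T\). Since \(T\) is an exact functor of the underlying triangulated categories, \(T(A) \to T(B) \to T(C) \xrightarrow{+1}\) is a distinguished triangle in \(\cD_2\). As \(A\), \(B\), \(C\) lie in \(\cD_1^\heartsuit \subseteq \cD_1^{\ge 0}\) and \(T\) is left \(t\)-exact, the objects \(T(A)\), \(T(B)\), \(T(C)\) all lie in \(\cD_2^{\ge 0}\). The key elementary observation is that \(\cH^j(M) = 0\) for every \(M\) in \(\cD_2^{\ge 0}\) and every \(j < 0\): indeed then \(M[j]\) lies in \(\cD_2^{\ge 1}\), so the uniqueness of the truncation triangle forces \(\tau_{\le 0}(M[j]) = 0\) and therefore \(\cH^0(M[j]) = 0\).

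Finally I would apply the cohomological functor \(\cH^0\) (Theorem~\ref{thm:bbdg136}) to the triangle \(T(A) \to T(B) \to T(C) \xrightarrow{+1}\) to obtain the long exact sequence
\[
  \cdots \longrightarrow \cH^{-1}\bigl(T(C)\bigr) \longrightarrow \cH^0\bigl(T(A)\bigr) \longrightarrow \cH^0\bigl(T(B)\bigr) \longrightarrow \cH^0\bigl(T(C)\bigr) \longrightarrow \cdots.
\]
Since \(\cH^{-1}(T(C)) = 0\) by the observation above and \(\prescript{\fp}{}{T} = \cH^0 \circ T \circ \varepsilon\), the sequence \(0 \to \prescript{\fp}{}{T}(A) \to \prescript{\fp}{}{T}(B) \to \prescript{\fp}{}{T}(C)\) is exact, which is precisely left exactness of \(\prescript{\fp}{}{T}\); additivity is immediate since \(\prescript{\fp}{}{T}\) is a composite of additive functors. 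The right \(t\)-exact case is formally dual: then \(T(A)\), \(T(B)\), \(T(C)\) lie in \(\cD_2^{\le 0}\), so \(\cH^1(T(A)) = 0\) and the same long exact sequence shows \(\prescript{\fp}{}{T}(A) \to \prescript{\fp}{}{T}(B) \to \prescript{\fp}{}{T}(C) \to 0\) is exact. I do not expect a genuine obstacle: the only points requiring care are the bookkeeping of the truncation functors \(\tau_{\le 0}\), \(\tau_{\ge 0}\) and the sign convention \(\cH^j(-) = \cH^0\bigl((-)[j]\bigr)\) in the vanishing statement, together with the observation that it is exactly the admissibility of short exact sequences in \(\cD_1^\heartsuit\) that licenses the passage to a distinguished triangle.
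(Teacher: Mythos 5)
Your argument is correct and is the standard proof: realize the short exact sequence in the heart as a distinguished triangle (using admissibility from Theorem~\ref{thm:bbdg136}), apply the triangulated-exact functor \(T\), note that left (resp.\ right) \(t\)-exactness forces the outer cohomology term \(\cH^{-1}(T(C))\) (resp.\ \(\cH^{1}(T(A))\)) to vanish, and read off left (resp.\ right) exactness from the long exact sequence of the cohomological functor \(\cH^0\). The paper cites this statement from \cite[Proposition 1.3.17\((i)\)]{BBDG18} without reproducing a proof, and your argument is precisely the one given there.
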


\subsection{Perverse sheaves}
We now define perverse sheaves with coefficients in rings such as \(\ZZ_\ell\)
or \(\QQ_\ell\) using the formalism of
\(\ell\)-adic complexes from \citeleft\citen{Eke90}\citepunct
\citen{Far09}\citemid \S5\citepunct \citen{ILO14}\citemid Expos\'e XIII,
\S4\citeright\ or \cite{BS15}, which match in our context by \cite[Proposition
5.5.4]{BS15}.
As noted in \cite[Remark 2.1.1]{Mor25}, this adic formalism extends to all
morphisms of finite type between excellent Noetherian
\(\ZZ[\frac{1}{\ell}]\)-schemes that are separated and of
finite Krull dimension.
In this situation, we have four functors \(f^*,f_*,f_!,f^!\) defined on
\(\DD^b_c\) satisfying the adjunctions
\[
  f^* \dashv f_* \quad\quad\text{and}\quad\quad f_! \dashv f^!.
\]
When the hypotheses of \cite[Expos\'e XVII, Th\'eor\`eme 0.2]{ILO14} hold, we
also have a dualizing functor \(\mathbf{D}_X\) interchanging \(f^*,f^!\) and
\(f_*,f_!\) by \cite[Expos\'e I, Proposition 1.12]{SGA5}.\medskip
\par See \cite[p.\ 198]{Eke90} for the definition of essentially zero \(\ell\)-adic
complexes used below.
\begin{citeddef}[{\citeleft\citen{Gab04}\citemid \S2\citepunct
  \citen{Far09}\citemid \S5.11\citepunct
  \citen{ILO14}\citemid Expos\'e XIII, \S4.1\citepunct
  \citen{Mor25}\citemid \S2.2\citeright}]
  Let \(\ell > 0 \) be a prime number.
  Let \(E\) be a finite extension of \(\QQ_\ell\) and let
  \(\cO\) be its ring of integers with uniformizer \(\varpi\).
  Let \(X\) be a Noetherian separated
  scheme and consider a strong perversity function
  \(\fp\) on \(X\).
  For every integer \(n\),
  we define strictly full subcategories \(\Dp^{\le n}\) and \(\Dp^{\ge n}\)
  of \(\DD^b_c(X,\cO)\) (resp.\ \(\DD^b_c(X,E) \coloneqq
  \DD^b_c(X,\cO) \otimes_{\cO} E\)) as follows.
  \begin{enumerate}[label=\((\roman*)\)]
    \item \(\Dp^{\le n}\)
      consists of all objects \(F^\bullet\) of
      \(\DD^b_c(X,\cO)\) (resp.\ \(\DD^b_c(X,E)\))
      such that for every \(x \in X\), 
      \(\cH^i(i_{\bar{x}}^*F^\bullet)\) is essentially zero (resp.\ zero)
      for all \(i > \fp(x) + n\).
    \item \(\Dp^{\ge n}\)
      consists of all objects \(F^\bullet\) of \(\DD^b_c(X,\cO)\)
      (resp.\ \(\DD^b_c(X,E)\)) such
      that for every \(x \in X\), \(\cH^i(i_{\bar{x}}^!F^\bullet)\) is
      essentially zero (resp.\ zero)
      for all \(i < \fp(x) + n\).
  \end{enumerate}
\end{citeddef}
We state the following result which says that in many cases, \((\Dp^{\le
0},\Dp^{\ge 0})\) defines a \(t\)-structure.
The statement in \cite{Far09} assumes the existence of a dualizing complex
for \(\DD^b_c(X,\cO/\varpi)\) in the sense of
\citeleft\citen{SGA5}\citemid Expos\'e I, D\'efinition 1.7\citepunct
\citen{ILO14}\citemid Expos\'e XVII, D\'efinition 0.1\citeright.
This assumption on \(\DD^b_c(X,\cO/\varpi)\) holds automatically for \(X\) as in
the statement below by \cite[Expos\'e XVII, Th\'eor\`eme 0.2]{ILO14}.
\par Below and in the rest of this paper, we set
\[
  \prescript{\fp}{}{\mathcal{H}}^i(\,\cdot\,) \coloneqq
  \mathcal{H}^0\bigl(\,\cdot\,[i]\bigr)
\]
where \(\mathcal{H}^0\) is computed in terms of
the \(t\)-structure defined by \(\fp\) (see Theorem \ref{thm:bbdg136}).
The superscript \(\fp\) on \(\prescript{\fp}{}{\mathcal{H}}^i\) is written to
differentiate this cohomology sheaf from the usual cohomology sheaf computed
using the ordinary \(t\)-structure.
\begin{citedthm}[{\citeleft\citen{Gab04}\citemid
  \S6 and Theorem 8.3\citepunct
  \citen{Far09}\citemid Th\'eor\`eme 5.29\citepunct
  \citen{Mor25}\citemid \S2.2 and Theorem 2.2.3\citeright\ (cf.\
  \citeleft\citen{BBDG18}\citemid \S2.2, Th\'eor\`eme 4.3.1\((i)\),
  and \((b)\) on p.\ 101\citeright)}]\label{thm:perv}
  Let \(\ell > 0 \) be a prime number.
  Let \(E\) be a finite extension of \(\QQ_\ell\) and let
  \(\cO\) be its ring of integers with uniformizer \(\varpi\).
  Let \(X\) be an equidimensional excellent Noetherian
  \(\ZZ[\frac{1}{\ell}]\)-scheme that is separated and of
  finite Krull dimension and consider the middle perversity
  \(\fp \coloneqq \fp_{1/2}\) on \(X\).
  Then, \((\Dp^{\le 0},\Dp^{\ge 0})\) defines a t-structure on
  \(\DD^b_c(X,\cO)\) (resp.\ \(\DD^b_c(X,E)\)).
  The heart
  \begin{align*}
    \Perv(X,\cO) &\coloneqq \Dp^{\le 0} \cap \Dp^{\ge0}
    \intertext{of the \(t\)-structure on \(\DD^b_c(X,\cO)\) is
    Noetherian.
    The heart}
    \Perv(X,E) &\coloneqq \Dp^{\le 0} \cap \Dp^{\ge0}
  \end{align*}
  of the \(t\)-structure on \(\DD^b_c(X,E)\) is both Artinian and
  Noetherian with simple objects of the form
  \[
    j_{!*}L[d] \coloneqq \im\Bigl(\prescript{\fp}{}{\cH}^0j_!L[d] \longrightarrow
    \prescript{\fp}{}{\cH}^0j_*L[d]\Bigr)
  \]
  where
  \(j\colon Z \hookrightarrow X\) is a locally closed immersion of a connected
  regular subscheme \(Z\) of dimension \(d\) and \(L\) is a lisse sheaf on
  \(Z\) corresponding to an irreducible representation of \(\pi_1^\et(Z)\).
\end{citedthm}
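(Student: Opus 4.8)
The plan is to follow the strategy of Be\u{\i}linson--Bernstein--Deligne--Gabber \cite{BBDG18}, as extended to excellent schemes by Gabber \cite{Gab04}, Fargues \cite{Far09}, and Morel \cite{Mor25}. All three assertions---that \((\Dp^{\le 0},\Dp^{\ge 0})\) is a \(t\)-structure, that \(\Perv(X,\cO)\) is Noetherian, and that \(\Perv(X,E)\) has finite length with the indicated simple objects---are proved by d\'evissage along a stratification of \(X\) into connected regular locally closed subschemes, using the recollement formalism for gluing \(t\)-structures from \cite[\S1.4]{BBDG18}. The inputs beyond \cite{BBDG18} are Gabber's finiteness and constructibility theorems for \(\ell\)-adic complexes on excellent Noetherian schemes collected in \cite[Expos\'es XIII and XVII]{ILO14}---in particular the existence of a dualizing complex for \(\DD^b_c(X,\cO/\varpi)\) \cite[Expos\'e XVII, Th\'eor\`eme 0.2]{ILO14}, which makes the costalk functors \(i_{\bar{x}}^!\) well-behaved and hence the pair \(\Dp^{\ge n}\) sensibly defined.

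First I would verify the \(t\)-structure axioms. The Hom-vanishing and the inclusions \(\Dp^{\le 0}\subseteq\Dp^{\le 1}\), \(\Dp^{\ge 1}\subseteq\Dp^{\ge 0}\) are formal consequences of the pointwise definitions via stalks and costalks together with the monotonicity of the middle perversity \(\fp_{1/2}\). For the existence of the truncation triangle I would induct on \(\dim(X)\): pick a dense open regular subscheme \(j\colon U \hookrightarrow X\) over which every ordinary cohomology sheaf \(\cH^i(F^\bullet)\) is lisse, so that on \(U\) the perverse \(t\)-structure is a shift of the standard one (for which truncation exists), let \(i\colon Z \hookrightarrow X\) be the closed complement with \(\dim(Z)<\dim(X)\), and glue the \(t\)-structures on \(U\) and \(Z\) via the recollement triple \((i^*,i_*,i^!)\), \((j_!,j^*,j_*)\). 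Finiteness of the Krull dimension guarantees \(\fp_{1/2}\) takes finitely many values, so the induction terminates and the glued truncation functors are defined on all of \(\DD^b_c(X,\cO)\) and \(\DD^b_c(X,E)\); this is \cite[Th\'eor\`eme 1.4.10]{BBDG18} combined with the convergence statements of \cite[\S6 and Theorem 8.3]{Gab04} and \cite[Th\'eor\`eme 5.29]{Far09}.

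For Noetherianness of \(\Perv(X,\cO)\), I would use that every perverse sheaf is constructible, hence determined by a finite stratification together with lisse sheaves of finitely generated \(\cO\)-modules on the strata; since \(\cO\) is a complete discrete valuation ring it is Noetherian, and an ascending chain of perverse subobjects of a fixed object induces ascending chains of closed support loci and, at the generic point of each stratum, of \(\cO\)-submodules of a finitely generated stalk module, both of which stabilize. For \(\Perv(X,E)\), a lisse \(E\)-sheaf on a connected regular \(Z\) is a finite-dimensional continuous representation of \(\pi_1^\et(Z)\), hence of finite length with irreducible constituents; inducting on the dimension of the support and using the recollement exact sequences shows that every object of \(\Perv(X,E)\) has finite length, giving both chain conditions. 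The classification of simple objects then follows from \cite[Th\'eor\`eme 4.3.1]{BBDG18}: every simple perverse sheaf is the intermediate extension \(j_{!*}\) of a simple perverse sheaf supported on one stratum, and on a connected regular \(Z\) of dimension \(d\) these are exactly the shifts \(L[d]\) of irreducible lisse sheaves---the shift being forced by the middle perversity and the regularity of \(Z\)---so that each simple object is \(j_{!*}L[d]=\im(\prescript{\fp}{}{\cH}^0 j_! L[d] \to \prescript{\fp}{}{\cH}^0 j_* L[d])\).

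The main obstacle is the existence of the truncation triangle outside the setting of schemes of finite type over a field or Dedekind ring: running the recollement argument requires that constructible \(\ell\)-adic complexes on an arbitrary excellent Noetherian scheme of finite Krull dimension be ``stratifiable'', that the six operations preserve constructibility, and that enough generic base change and uniform local acyclicity hold---these are precisely Gabber's deep finiteness theorems \cite[Expos\'e XIII]{ILO14}, and their incorporation into the perverse formalism is the technical core of \cite{Gab04,Far09,Mor25}. A secondary subtlety is that the simple objects are intermediate extensions from \emph{regular} strata: this relies on a regular scheme being normal, hence geometrically unibranch, so that intermediate extension behaves as in the classical theory and no monodromy-invariants correction is needed.
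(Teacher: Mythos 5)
This is a \texttt{citedthm}: the paper states Theorem~\ref{thm:perv} as imported from Gabber, Fargues, and Morel (with cross-reference to \cite{BBDG18}) and supplies no internal proof, so there is no argument in the paper to compare against. Your sketch correctly identifies and reproduces the strategy of those sources: construct the \(t\)-structure by recollement along a stratification into regular locally closed pieces (\cite[\S1.4, Th\'eor\`eme 1.4.10]{BBDG18}), with the six-functor formalism, generic base change, stratifiability, and dualizing-complex existence on excellent Noetherian schemes of finite Krull dimension furnished by Gabber's finiteness theorems in \cite[Expos\'es XIII and XVII]{ILO14}; then obtain Noetherianness of \(\Perv(X,\cO)\), finite length of \(\Perv(X,E)\), and the description of simple objects by the d\'evissage of \cite[\S4.3, Th\'eor\`eme 4.3.1]{BBDG18} adapted to this setting. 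Your identification of the genuine technical obstacle (the finiteness results underlying constructibility on arbitrary excellent Noetherian bases) and the secondary subtlety (regularity implies geometric unibranchedness, so intermediate extension behaves classically) are both on point.

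One small imprecision worth flagging: your Noetherianness argument for \(\Perv(X,\cO)\) — stabilization of support loci plus stabilization of \(\cO\)-submodules at generic points of strata — is a heuristic rather than a proof. An ascending chain of perverse subobjects need not have its restriction to each stratum determined solely by generic-stalk data, and the correct argument (as in \cite[4.3.1]{BBDG18} and its adaptations in \cite{Gab04,Mor25}) proceeds by Noetherian induction on the support: one splits off the contribution from a dense open stratum (where a perverse subobject is a shifted lisse sub-\(\cO\)-sheaf, which stabilizes since \(\cO\) is Noetherian) and then invokes the inductive hypothesis on the strictly smaller closed complement using the recollement exact sequences. Likewise, the finite-length proof for \(\Perv(X,E)\) requires that the intermediate extension of a simple object stays simple and that this exhausts the simple objects; you cite the right result (\cite[Th\'eor\`eme 4.3.1]{BBDG18}), but in the excellent-scheme setting this relies on the lisse/constructibility dichotomy on regular strata being stable under the six operations, which is again Gabber's input rather than formal. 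These are refinements, not gaps in the underlying plan.
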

We can now define perverse sheaves as the objects living in
the categories \(\Perv(X,\cO)\) and \(\Perv(X,E)\) appearing in
Theorem \ref{thm:perv}.
\begin{citeddef}[{\citeleft\citen{BBDG18}\citemid p.\ 102\citepunct
  \citen{Gab04}\citemid \S7\citepunct
  \citen{Mor25}\citemid \S2.2\citeright}]
  Let \(\ell > 0 \) be a prime number.
  Let \(E\) be a finite extension of \(\QQ_\ell\) and let
  \(\cO\) be its ring of integers.
  Let \(X\) be an equidimensional excellent Noetherian
  \(\ZZ[\frac{1}{\ell}]\)-scheme that is separated and of
  finite Krull dimension and consider the middle perversity
  \(\fp \coloneqq \fp_{1/2}\) on \(X\).
  The hearts \(\Perv(X,\cO)\) and \(\Perv(X,E)\) of the
  \(t\)-structures
  appearing in Theorem
  \ref{thm:perv} are called the categories of 
  \textsl{perverse sheaves} with coefficients in \(\cO\) and \(E\),
  respectively.
\end{citeddef}
\begin{remark}\label{rem:gabbertruncate}
  For future reference, we recall Gabber's construction of the truncation
  functors \(\prescript{\fp}{}{\tau}_{\le0}\) from \cite[p.\ 712 and
  \S6]{Gab04}, which he
  used to prove Theorem \ref{thm:perv}.
  \begin{enumerate}[label=\((\arabic*)\),
    ref={\ref*{rem:gabbertruncate}.\arabic*}]
    \item Let \(F^\bullet\) be a complex that is bounded below.
      We can then construct the Godement resolution \(C(F^\bullet)\)
      of \(F^\bullet\) by taking the total complex
      \[
        C(F^\bullet) \coloneqq \Tot\bigl(C^\bullet(F^\bullet)\bigr)
      \]
      as in
      \citeleft\citen{SGA43}\citemid Expos\'e XVII, (4.2.9)\citepunct
      \citen{Gab04}\citemid p.\ 712\citeright,
      which works for \(\ell\)-adic complexes by
      \cite[pp.\ 129--130]{FK88}.
    \item We iterate this construction.
      Denote by \(\epsilon_{F^\bullet}\colon F^\bullet \to C(F^\bullet)\) the
      augmentation map.
      For every integer \(n \ge 1\), we then inductively define the augmentation
      maps
      \[
        \epsilon_{C^{(n)}(F^\bullet)}\colon
        C^{(n)}(F^\bullet) \longrightarrow C^{(n+1)}(F^\bullet).
      \]
      We can also iterate over ordinals by setting
      \begin{align*}
        C^{(\alpha+1)}(F^\bullet) &\coloneqq
        C\bigl(C^{(\alpha)}(F^\bullet)\bigr)
        \intertext{for successor ordinals \(\alpha+1\) and}
        C^{(\lambda)}(F^\bullet) &\coloneqq \varinjlim_{\alpha <\lambda}
        C^{(\alpha)}(F^\bullet)
      \end{align*}
      for limit ordinals \(\lambda\).
      The only infinite ordinal we will use in this construction is
      the first limit ordinal \(\omega\).
      Note that all terms in these iterated Godement resolutions are flasque.
    \item Let \(c \coloneqq -{\dim(X)}\), which is a lower bound for
      \(\fp_{1/2}\).
      For every \(d \ge c\), consider the strong perversity function
      \(\fp_d\coloneqq\min\{d,\fp\}\).
      We inductively define
      \(\tau_{\le \fp_d}F^\bullet\) as a
      subcomplex of \(C^{(d-c)}(F^\bullet)\) as follows.
      For \(d = c\), we have \(\fp_c = c\), and set
      \[
        \tau_{\le \fp_c}F^\bullet \coloneqq \tau_{\le
        c}F^\bullet.
      \]
      For the inductive step, we need to see that if
      \(\tau_{\le \fp_d}F^\bullet\) is a subcomplex of
      \(F^\bullet\), then we
      can define \(\tau_{\le \fp_{d+1}}F^\bullet\) as a
      subcomplex of \(C(F^\bullet)\) containing the image of
      \(C(\tau_{\le \fp_d}F^\bullet)\) in the commutative
      diagram
      \[
        \begin{tikzcd}
          0 \rar
          & \tau_{\le \fp_d}F^\bullet \rar \dar[hook]
          & F^\bullet \rar\dar[hook]
          & F^\bullet/\tau_{\le \fp_d}F^\bullet
          \rar
          & 0\\
          0 \rar
          & C(\tau_{\le \fp_d}F^\bullet) \rar
          & C(F^\bullet)
        \end{tikzcd}
      \]
      of complexes.
      It suffices to construct \(\tau_{\le \fp_{d+1}}\) for
      \(F^\bullet/\tau_{\le \fp_d}F^\bullet\).
      We therefore assume \(F^\bullet\) lies in
      \(\prescript{\fp_d}{}{\cD}^{\ge1} = \cD^{\ge
      \fp_d+1}\).
      Consider the ind-constructible set
      \[
        \Phi\coloneqq \Set[\big]{x \in X \given \fp_{d+1}(x) = d+1}.
      \]
      We then set
      \[
        \tau_{\fp_{d+1}}F^\bullet \coloneqq
        \tau_{d+1}
        \underline{\Gamma}_\Phi\bigl(C(F^\bullet)\bigr) \in \cD^{\le \fp_{d+1}},
      \]
      which is quasi-isomorphic to \(\sH_\Phi^{d+1}(F^\bullet)[-d-1]\) in the
      derived category.
    \item We set
      \[
        \prescript{\fp}{}{\tau}_{\le0} F^\bullet =
        \tau_{\le \fp} F^\bullet \coloneqq \varinjlim_{d \ge c}
        \tau_{\le \fp_{d}} F^\bullet
      \]
      which is a subcomplex of \(C^{(\omega)}(F^\bullet)\) by construction.
    \item Suppose\label{rem:gabbertruncatemodgodement} that \(F^\bullet\) is the
      direct limit of constructible complexes \(F^\bullet_\alpha\).
      Then, we can use the modified Godement resolution
      \[
        C_\ell(F^\bullet) \coloneqq \varinjlim_\alpha C(F^\bullet_\alpha)
      \]
      from \cite[Expos\'e XVIII, \S3.1]{SGA43} instead of \(C(F^\bullet)\) to
      make this construction compatible with direct limits.
  \end{enumerate}
\end{remark}

\subsection{Pullback along a regular morphism}
Pullback along a regular morphism, in particular, base change to the completion
on a locally excellent scheme, is \(t\)-exact with respect to the perverse
\(t\)-structure after taking an appropriate shift.
The case of base change to the completion is due to Fargues \cite[Th\'eor\`eme
6.1]{Far09}.
\begin{theorem}\label{thm:fargues}
  Let \(\ell > 0\) be a prime number.
  Let \(E\) be a finite extension of \(\QQ_\ell\) and let
  \(\cO\) be its ring of integers.
  Consider a regular morphism \(f\colon Y \to X\) between
  equidimensional excellent Noetherian
  \(\ZZ[\frac{1}{\ell}]\)-schemes that are separated and
  of finite Krull dimension.
  Then, the functors
  \begin{alignat*}{3}
    f^*\bigl[\dim(Y)-\dim(X)\bigr] &\colon \DD^b_c(X,\cO) &{}\longrightarrow{}&
    \DD^+_c(X,\cO)\\
    f^*\bigl[\dim(Y)-\dim(X)\bigr] &\colon \DD^b_c(X,E) &{}\longrightarrow{}&
    \DD^+_c(X,E)
  \end{alignat*}
  are \(t\)-exact with respect to the perverse \(t\)-structure.
\end{theorem}
\begin{proof}
  Since the question is local,
  we can replace \(X\) and \(Y\) by affine open subsets to assume that both
  \(X = \Spec(A)\) and \(Y = \Spec(B)\) are affine.
  \par For every \(y \in Y\), we have
  \begin{align*}
    \fp_{1/2}\bigl(f(y)\bigr) &= \dim(\cO_{X,f(y)}) - \dim(X)\\
    &\le \dim(\cO_{Y,y}) - \dim(Y) + \bigl(\dim(Y) - \dim(X)\bigr)\\
    &= \fp_{1/2}(y) + \bigl(\dim(Y) - \dim(X)\bigr).
  \end{align*}
  Since \(f^*\) is exact with respect to the ordinary \(t\)-structure,
  we see that
  \[
    f^*\bigl(\prescript{\fp}{}{\mathcal{D}}^{\le0}\bigr) \subseteq
    \prescript{\fp}{}{\mathcal{D}}^{\le{\dim(Y)-\dim(X)}}
  \]
  that is, \(f^*[\dim(Y)-\dim(X)]\) is right \(t\)-exact with respect to the
  perverse \(t\)-structure.\smallskip
  \par It remains to show that
  \(f^*[\dim(Y)-\dim(X)]\) is left \(t\)-exact with respect to the
  perverse \(t\)-structure.
  Let \(F^\bullet\) lie in \(\prescript{\fp}{}{\mathcal{D}}^{\ge0}\).
  By \cite[Lemma 3.1]{Gab04}, it suffices to show that
  \[
    \sH^i_{\overline{\{\fq\}}}\bigl(f^*F^\bullet\bigr) = 0
  \]
  for all \(i \le {\dim(Y)-\dim(X)}\) and for all \(\fq \in Y\).
  Since this vanishing is a local condition, it suffices by \cite[Proposition
  5.2]{Gab04} (see also \cite[p.\ 286]{Far09}) that
  \[
    H^i_{a^{-1}(\overline{\{\fq\}})}\bigl(a^*f^*F^\bullet\bigr) = 0
  \]
  for all \(i \le {\dim(Y)-\dim(X)}\), for all \(\fq \in Y\), and for all \(y
  \in U\) where \(a\colon U \to Y\) is \'etale and \(a(y) \in
  \overline{\{\fq\}}\).
  By N\'eron--Popescu desingularization
  \citeleft\citen{Pop86}\citemid Theorem 2.4\citepunct \citen{Pop90}\citemid p.\
  45\citepunct\citen{Swa98}\citemid Theorem 1.1\citeright,
  there exists a direct system \(\{B_\lambda\}_{\lambda \in \Lambda}\)
  of smooth \(A\)-algebras such that
  \[
    B \cong \varinjlim_{\lambda \in \Lambda} B_\lambda.
  \]
  Setting \(W_\lambda = \Spec(B_\lambda)\), we have the factorization
  \[
    \begin{tikzcd}[column sep=tiny]
      Y \arrow{rr}{f} \arrow{dr}[swap]{h_\lambda} & & X\\
      & W_\lambda\mathrlap{.} \arrow{ur}[swap]{\vphantom{h}g_\lambda}
    \end{tikzcd}
  \]
  We denote by \(d_\lambda=\dim(B_\lambda)-\dim(A)\) the relative dimension of \(g_\lambda\).
  \par Let \(\fq \in Y\), let \(a\colon U \to Y\) be an \'etale morphism, and let \(y
  \in U\) such that \(a(y) \in \overline{\{\fq\}}\).
  There exists \(\lambda_0\) such that setting \(\fp_\lambda = h_\lambda(\fq)\), we have
  \(\fp_\lambda B = \fq\) for all \(\lambda \ge \lambda_0\).
  By \cite[Expos\'e VII, Lemme 5.6]{SGA43}, there exists \(\lambda_1 \ge \lambda_0\) and an
  \'etale morphism \(a_{\lambda_1}\colon U_{\lambda_1} \to W_{\lambda_1}\)
  fitting into the Cartesian
  diagram
  \[
    \begin{tikzcd}
      U \rar{a}\dar & Y\dar{h_\lambda}\\
      U_{\lambda_1} \rar{a_{\lambda_1}} & W_{\lambda_1}\mathrlap{.}
    \end{tikzcd}
  \]
  Denote by \(U_\lambda \to W_\lambda\) the base change of \(a_{\lambda_1}\)
  along \(W_\lambda \to W_{\lambda_1}\).
  By Gabber's version of Grothendieck's limit theorem for local cohomology
  \cite[Proposition 5.2]{Gab04}, we then have
  \[
    H^i_{a^{-1}(\overline{\{\fq\}})}\bigl(a^*f^*F^\bullet\bigr)
    = \varinjlim_{\lambda \ge \lambda_1} H^i_{a_\lambda^{-1}(\overline{\{\fp_\lambda\}})}
    \bigl(a_\lambda^*\,g_\lambda^*\,F^\bullet\bigr).
  \]
  Note that
  \begin{align*}
    H^i_{a_\lambda^{-1}(\overline{\{\fp_\lambda\}})}
    \bigl(a_\lambda^*\,g_\lambda^*\,F^\bullet\bigr) &= 0
    \intertext{for all}
    i < \fp_{1/2}(\fp_\lambda) + d_\lambda
    = \dim\bigl((B_{\lambda})_{\fp_\lambda}\bigr) &- \dim(B_\lambda) + d_\lambda
  \end{align*}
  since \(g_\lambda^*[d_\lambda]\) is
  \(t\)-exact \citeleft\citen{BBDG18}\citemid Proposition 4.2.5\citepunct
  \citen{Mor25}\citemid Proposition 2.2.2\((v)\)\citeright.
  Thus, it suffices to show there exists \(\lambda_2 \ge \lambda_1\) such that
  \begin{align*}
    \MoveEqLeft[3]
    \dim\bigl((B_{\lambda})_{\fp_\lambda}\bigr) - \dim(B_\lambda) + d_\lambda\\
    &\ge \fp_{1/2}(\fq) + \bigl(\dim(B) - \dim(A)\bigr)\\\
    &= \bigl(\dim(B_\fq) - \dim(B)\bigr) + \bigl(\dim(B) - \dim(A)\bigr)
  \end{align*}
  for all \(\lambda \ge \lambda_2\).
  Since \(d_\lambda = \dim(B_\lambda) - \dim(A)\), it suffices to show there
  exists \(\lambda_2 \ge \lambda_1\) that
  \begin{equation}\label{eq:dimapproxbigger}
    \dim\bigl((B_{\lambda})_{\fp_\lambda}\bigr) \ge \dim(B_\fq)
  \end{equation}
  for all \(\lambda \ge \lambda_2\).
  Consider a strictly ascending chain of prime ideals in \(B_\fq\) of
  maximal length.
  Then, there exists \(\lambda_2 \ge \lambda_1\) for which this chain contracts
  to a strictly ascending chain of prime ideals in \((B_{\lambda})_{\fp_\lambda}\).
  For such a choice of \(\lambda_2\), the inequality \eqref{eq:dimapproxbigger}
  holds for all \(\lambda \ge \lambda_2\).
\end{proof}

\subsection{Exceptional pullback for morphisms between regular schemes}
We define exceptional pullbacks for morphisms betweeen equidimensional excellent
regular Noetherian \(\ZZ[\frac{1}{\ell}]\)-schemes.
\begin{definition}\label{def:cfpullback}
  Let \(\ell > 0\) be a prime number and
  let \(E\) be a finite extension of \(\QQ_\ell\).
  Let \(f\colon Y \to X\) be a morphism between equidimensional excellent
  regular Noetherian \(\ZZ[\frac{1}{\ell}]\)-schemes that are separated and of
  finite Krull dimension.
  We define the functor
  \[
    f^! \coloneqq \mathbf{D}_Y \mathop{\circ} f^* \mathop{\circ} \mathbf{D}_X
  \]
  using the dualizing complexes \(E[2\dim(X)]({\dim(X)})\)
  and \(E[2\dim(Y)]({\dim(Y)})\) on \(X\) and
  \(Y\), respectively.
  Note that \(E\) and its shifts are dualizing complexes by
  \citeleft\citen{ILO14}\citemid
  Expos\'e XVII, Th\'eor\`eme 0.2\citepunct \citen{BS15}\citemid Lemma
  6.7.20\citeright\ since \(X\) and \(Y\) are regular.
  This definition matches the usual definition for finite type morphisms by
  \cite[Expos\'e I, Proposition 1.12\((b)\)]{SGA5}.
\end{definition}
\begin{remark}\label{rem:cfpullbackql}
  \par The definition in Definition \ref{def:cfpullback} is chosen so that
  setting \(d_X \coloneqq \dim(X)\) and \(d_Y \coloneqq \dim(Y)\), we have
  \begin{align*}
    \MoveEqLeft[3]
    f^!
    [d_X-d_Y](d_X-d_Y)\,E[d_X]\\
    &= \RRHHom\Bigl(f^*\RRHHom\bigl(E[d_X],
    E[2d_X](d_X)\bigr),
    E[2d_Y](d_Y)\Bigr)
    [d_X-d_Y](d_X-d_Y)\\
    &\cong E[d_Y].
  \end{align*}
\end{remark}

\section{Algebraic \texorpdfstring{\(\mathscr{D}\)}{D}-modules and
the Riemann--Hilbert
correspondence}\label{sect:algdmod}
We review some aspects of the theory of \(\mathscr{D}\)-modules from an
algebraic point of view, mostly following \cite{Ber83,BGKHME87,HTT08}.
\subsection{Differential operators and \texorpdfstring{\(\mathscr{D}\)}{D}-modules}
\par For a scheme morphism \(X \to S\), we denote by \(\sD_{X/S}\) the
\textsl{sheaf of \(S\)-differential operators on \(X\)} as defined in
\citeleft\citen{EGAIV4}\citemid D\'efinition 16.8.1 and Corollaire
16.8.10\citepunct
\citen{SGA3}\citemid Expos\'e VII\textsubscript{A}, D\'efinition
1.4\citeright.
The sheaf \(\sD_{X/S}\) is a sheaf of (non-commutative) rings.
By a \textsl{\(\sD_{X/S}\)-module}, we will mean a left module over \(\sD_{X/S}\).
\par When \(X\) is a smooth variety over a field \(k\) of characteristic zero,
this definition matches
the definitions in \citeleft\citen{Ber83}\citemid Lecture 1, \S2\citepunct
\citen{BGKHME87}\citemid Chapter VI, \S1\citepunct \citen{HTT08}\citemid
\S1.1\citeright, and we set
\[
  \sD_X \coloneqq \sD_{X/{\Spec(k)}}.
\]
We will use the notions of \textsl{holonomic} and \textsl{regular holonomic}
\(\sD\)-modules.
In the analytic context, these notions were defined in \cite{Kas75,KK81}.
Algebraic versions of these definitions appear in \cite{Ber83,BGKHME87,HTT08}.
The analytic and algebraic definitions are compatible when \(k = \CC\)
by \cite[\S VII]{Bry86}.
\par For smooth varieties over a field \(k\) of characteristic zero,
we use the notation and conventions for
shifts from \cite{HTT08}.
For morphisms \(f\colon Y \to X\) between smooth varieties over \(k\), we have four
functors \(f^\bigstar,\int_f,\int_{f!},f^\dagger\) satisfying the adjunctions
\[
  f^\bigstar \dashv \int_f \qquad \text{and} \qquad \int_{f!} \dashv f^\dagger
\]
on categories of holonomic or regular holonomic
\(\sD\)-modules \citeleft\citen{Ber83}\citemid Lecture 3, \S9\citepunct
\citen{BGKHME87}\citemid Chapter VII, Theorem 10.2\citepunct
\citen{HTT08}\citemid Theorems 3.2.14 and
6.1.5\citeright.
For example, the (shifted) inverse image functor is
\begin{equation}\label{eq:deffdagger}
  f^\dagger\coloneqq \LL f^*\bigl[\dim(Y)-\dim(X)\bigr]
\end{equation}
where \(\LL f^*\) is the derived inverse image functor as \(\cO\)-modules \cite[p.\
33]{HTT08}.\medskip
\par Local cohomology is an important example of a \(\sD\)-module.
\begin{example}[Local cohomology {\citeleft\citen{Kas70}\citemid
  \S2.1\citeright}]\label{ex:lcisrh}
  Let \(X\) be a smooth variety over a field of characteristic zero
  and let \(Z \subseteq X\) be a locally
  closed subset.
  For every \(i\), the (algebraic) local cohomology functor \(\sH^i_Z(\,\cdot\,)\)
  sends \(\sD_X\)-modules to \(\sD_X\)-modules.
  Its associated derived functor \(\RRGGamma_Z\) is a functor on the bounded derived
  category of \(\sD_X\)-modules.
  These functors preserve strong finiteness properties: They preserve
  holonomicity \citeleft\citen{Meb77}\citemid Theorem 1.1\((i)\)\citepunct
  \citen{Kas78}\citemid \S1\citeright\ and regular holonomicity \cite[Theorem
  5.4.1]{KK81}.
  See \citeleft\citen{Ber83}\citemid Lecture 2, \S2\citepunct
  \citen{BGKHME87}\citemid Chapter VI, (7.9)\citepunct \citen{HTT08}\citemid
  \S1.7\citeright\ for some discussion in the algebraic case.
  The algebraic versions of the analytic results cited above also follow from the
  analytic versions \cite[\S VII]{Bry86}.
  \par Since \(\cO_X\) is regular holonomic \cite[Chapter VII, Corollary
  11.8]{BGKHME87} and the subcategory of regular holonomic \(\sD_X\)-modules is
  stable under subobjects, quotients, and extensions, we see that
  local cohomology sheaves \(\sH^i_Z(\cO_X)\) and the sheaves \(T(\cO_X)\)
  obtained by applying a Lyubeznik functor to \(\cO_X\) are
  regular holonomic \(\sD_X\)-modules for closed supports \(Z \subseteq X\).
\end{example}
\subsection{The Riemann--Hilbert correspondence}
Let \(X\) be a complex manifold.
The Riemann--Hilbert correspondence, due to Kashiwara
\cite{Kas75,Kas80,Kas84} and Mebkhout \cite{Meb80,Meb82,Meb84a,Meb84b}, states that the
de Rham functor
\[
  \begin{tikzcd}[row sep=0,column sep=1.475em]
    \DD^b_{\rh}(\sD_X) \rar & \DD^b_c(X,\CC)\\
    M^\bullet \rar[mapsto] &
    \Omega_X \otimes^{\mathrm{L}}_{\sD_X} M^\bullet
  \end{tikzcd}
\]
is an equivalence of categories that is \(t\)-exact with respect to the ordinary
\(t\)-structure on the left and the perverse \(t\)-structure on the right.
Here, \(\DD^b_{\rh}(\sD_X)\) is the bounded derived category of
\(\sD_X\)-modules with regular holonomic cohomology sheaves and
\(\DD^b_c(X,\CC)\) is the bounded derived category of sheaves of \(\CC\)-vector
spaces with constructible cohomology sheaves.\medskip
\par The algebraic version of this correspondence is the following result due to
Be\u{\i}linson and Bernstein \citeleft\citen{Ber83}\citemid Lecture 5\citepunct
\citen{BGKHME87}\citemid Chapter VIII\citeright, which can also
be deduced from the analytic version \cite[\S VII]{Bry86} (see also
\citeleft\citen{Meb89}\citemid Chapitre II, \S8\citepunct
\citen{HTT08}\citemid \S4.7\citeright).
Again, \(\DD^b_{\rh}(\sD_X)\) is the bounded derived category of
\(\sD_X\)-modules with regular holonomic cohomology sheaves and
\(\DD^b_c(X^\an,\CC)\) is the bounded derived category of sheaves of \(\CC\)-vector
spaces with constructible cohomology sheaves on the analytification \(X^\an\) of
\(X\).
\begin{theorem}[The Riemann--Hilbert correspondence for algebraic \(\sD\)-modules]
  \label{thm:riemannhilbert}
  Let \(X\) be a smooth complex variety.
  Then, there is a equivalence of categories
  \[
    \begin{tikzcd}[row sep=0,column sep=1.475em]
      \DR_X\colon &[-2.35em] \DD^b_{\rh}(\sD_X) \rar & \DD^b_c(X^\an,\CC)\\
      & M^\bullet \rar[mapsto] &
      \Omega_{X^\an} \otimes^{\mathrm{L}}_{\sD_{X^\an}} (M^\bullet)^\an
    \end{tikzcd}
  \]
  that is \(t\)-exact with respect to the ordinary
  \(t\)-structure on the left and the perverse \(t\)-structure on the right.
  This functor is compatible with the four functors
  \(f^\bigstar,\int_f,\int_{f!},f^\dagger\) and \(f^*,f_*,f_!,f^!\) and with
  \(\otimes^\LL\), \(\RRHHom\), and duality.
\end{theorem}
Note that \(\DD^b_c(X^\an,\CC)\) is defined with respect to the Euclidean
topology on the complex manifold \(X^\an\) and not for the \'etale topology on
the variety \(X\).
To move between the two topologies, we have the following.
\begin{citedthm}[{\cite[(6.1.2)]{BBDG18}}]\label{thm:bbdg612}
  Let \(X\) be a complex variety.
  Consider the morphism
  \begin{align*}
    \varepsilon\colon X^\an &\longrightarrow X_\et
  \intertext{of topoi.
  For every prime number \(\ell > 0\), the pullback functor}
    \varepsilon^*\colon \DD^b_c(X,\ZZ_\ell) &\overset{\sim}{\longrightarrow}
    \DD^b_c(X^\an,\ZZ_\ell)
  \end{align*}
  is an equivalence of categories that is \(t\)-exact with respect to the
  ordinary \(t\)-structure and the perverse \(t\)-structures.
  This equivalence commutes with the four functors
  \(f^*,f_*,f_!,f^!\) and with \(\otimes^\LL\) and \(\RRHHom\).
  The functor \(\varepsilon^*\) induces a fully faithful functor for \(\QQ_\ell\)
  and \(\bar{\QQ}_\ell\) coefficients.
\end{citedthm}

\section{Lyubeznik functors of excellent regular
\texorpdfstring{\(\QQ\)}{Q}-algebras}\label{sect:equichar0}
\par While there is no version of the Riemann--Hilbert correspondence available
in our generality, even for complete regular local \(\QQ\)-algebras, we show
that there exists a perverse sheaf encoding the data of the associated
primes of \(\sH^i_Z(\cO_X)\).
In fact, for smooth complex varieties, the perverse sheaf we use (after
extension of scalars) coincides with the
image of \(\sH^i_Z(\cO_X)\) under the Riemann--Hilbert correspondence (Theorem
\ref{thm:riemannhilbert}).
When \(X\) is a smooth complex variety, our main result below (Theorem
\ref{thm:assocaresimple}) says that the associated primes
of \(\sH^i_Z(\cO_X)\) are contained in the set of generic points of supports of
simple components of
\(\DR_X(\sH^i_Z(\cO_X))\).\medskip
\par Since we cannot apply the Riemann--Hilbert correspondence directly, we will
approximate \(X\) by smooth varieties using N\'eron--Popescu desingularization
\citeleft\citen{Pop86}\citemid Theorem 2.4\citepunct \citen{Pop90}\citemid p.\
45\citepunct\citen{Swa98}\citemid Theorem 1.1\citeright\ 
(proved earlier in \cite[Corollary 5.4]{Pop85} for \(\QQ\)-algebras).
\subsection{Approximation}
We will use the following construction throughout this section.
\begin{setup}\label{setup:approx}
  Let \(R\) be an excellent regular \(\QQ\)-algebra of finite Krull dimension
  \(d\) and set \(X \coloneqq \Spec(R)\).
  Let
  \[
    T = T_r \mathop{\circ} \cdots \mathop{\circ} T_2 \mathop{\circ} T_1
  \]
  be
  a Lyubeznik functor on \(X\).
  We approximate \(R\) and \(T\) as follows.
  \begin{enumerate}[label=\((\arabic*)\),ref={\ref*{setup:approx}.\arabic*}]
    \item By N\'eron--Popescu desingularization \citeleft\citen{Pop86}\citemid Theorem
      2.4\citepunct \citen{Pop90}\citemid p.\ 45\citepunct\citen{Swa98}\citemid
      Theorem 1.1\citeright\ (proved earlier in \cite[Corollary 5.4]{Pop85} for
      \(\QQ\)-algebras), we can write
      \[
        R = \varinjlim_{\lambda \in \Lambda} R_\lambda
      \]
      as a direct limit of integral smooth \(\QQ\)-algebras with transition and
      insertion maps
      \begin{align*}
        \varphi_{\lambda\mu}\colon R_\lambda &\longrightarrow R_\mu,\\
        \varphi_\lambda \colon R_\lambda &\longrightarrow R.
      \end{align*}
      We set
      \(X_\lambda \coloneqq \Spec(R_\lambda)\) and
      \(d_\lambda \coloneqq \dim(R_\lambda)\)
      for every \(\lambda \in \Lambda\).
    \item We\label{setup:approx2}
      find \(\lambda_0 \in \Lambda\) and approximate \(T_\lambda\) by Lyubeznik
      functors on \(R_\lambda\) for \(\lambda \ge \lambda_0\) as follows.
      If \(r = 0\), we set \(T_\lambda = \id\) as a functor on the category of
      regular holonomic \(\sD_{X_\lambda}\)-modules or quasi-coherent
      \(\cO_{X_\lambda}\)-modules
      and choose \(\lambda_0 \in \Lambda\)
      arbitrarily.
      Suppose \(r > 0\) and set
      \[
        T' \coloneqq T_{r-1} \mathop{\circ} \cdots \mathop{\circ} T_2
        \mathop{\circ} T_1.
      \]
      \begin{enumerate}[label=\((\roman*)\),ref=\roman*]
        \item
          Suppose\label{setup:approx2i} \(T_r = \sH^{i}_{Y}(\,\cdot\,)\) for a locally
          closed subset \(Y\) of \(X\).
          Decompose the inclusion \(Y
          \hookrightarrow X\) as the composition
          \[
            Y \overset{i_Y}{\hooklongrightarrow} U
            \overset{j_U}{\hooklongrightarrow} X
          \]
          of a closed immersion followed by an open immersion.
          By \cite[Proposition 8.6.3]{EGAIV3},
          after possibly replacing \(\lambda_0\) by a larger element in
          \(\Lambda\), we can find an open subset \(j_{U_{\lambda_0}}\colon U_{\lambda_0}
          \hookrightarrow
          X_{\lambda_0}\) such that
          \begin{align*}
            U &\cong U_{\lambda_0} \times_{X_{\lambda_0}} X.
          \intertext{After possibly replacing \(\lambda_0\) again by a larger element in
          \(\Lambda\), we can find a closed subset \(i_{Y_{\lambda_0}}\colon Y_{\lambda_0}
          \hookrightarrow
          U_{\lambda_0}\) such that}
            Y &\cong Y_{\lambda_0} \times_{U_{\lambda_0}} U.
          \end{align*}
          For every \(\lambda \ge \lambda_0\), denote by
          \(j_{U_\lambda}\colon U_\lambda \hookrightarrow X_\lambda\) and
          \(i_{Y_\lambda}\colon Y_\lambda \hookrightarrow U_\lambda\)
          the pullbacks of \(j_{\lambda_0}\) and \(i_{\lambda_0}\) to
          \(X_\lambda\) and \(U_\lambda\), respectively.
          We set
          \[
            T_{\lambda,r} \coloneqq \cH^i \mathop{\circ}
            \RRGGamma_{Y_\lambda}
          \]
          as a functor on the category of regular holonomic
          \(\sD_{X_\lambda}\)-modules.
          As a functor
          on the category of quasi-coherent \(\cO_{X_\lambda}\)-modules,
          we have \(T_{\lambda,r} = \sH^i_{Y_\lambda}(\,\cdot\,)\).
        \item
          Suppose\label{setup:approx2ii} \(T_r\) is the kernel of a morphism in the long exact sequence
          \[
            \cdots \longrightarrow \sH^i_{Y'}(\,\cdot\,) \longrightarrow
            \sH^i_{Y}(\,\cdot\,)
            \longrightarrow \sH^i_{Y-Y'}(\,\cdot\,) \longrightarrow \cdots
          \]
          from \citeleft\citen{SGA2}\citemid Expos\'e I, Th\'eor\`eme
          2.8\citepunct \citen{Kas78}\citemid (1.2.6)\citeright\ 
          where \(Y\) is a locally closed subset of \(X\) and \(Y'\) is a
          closed subset of \(Y\).
          With notation as in \((\ref{setup:approx2i})\), we also denote by
          \(i\colon Y' \hookrightarrow Y\) the closed inclusion and \(j\colon Y
          - Y' \hookrightarrow Y\) the open inclusion of its complement in \(Y\).
          After possibly replacing \(\lambda_0\) by a larger element in
          \(\Lambda\), we can find a closed subset \(i_{\lambda_0}\colon Y'_{\lambda_0}
          \hookrightarrow Y_{\lambda_0}\) with complement \(j_{\lambda_0}\colon
          Y_{\lambda_0} - Y'_{\lambda_0} \hookrightarrow Y_{\lambda_0}\) such that
          \begin{align*}
            Y' &\cong Y'_{\lambda_0} \times_{Y_{\lambda_0}} Y,\\
            Y - Y' &\cong \bigl(Y_{\lambda_0} - Y'_{\lambda_0}\bigr)
            \times_{Y_{\lambda_0}} Y.
          \end{align*}
          For every \(\lambda \ge \lambda_0\), denote by
          \(i_\lambda\colon Y'_\lambda \hookrightarrow Y_\lambda\) and
          \(j_\lambda\colon Y_\lambda -Y'_\lambda \hookrightarrow Y_\lambda\)
          the pullbacks of \(i_{\lambda_0}\) and \(j_{\lambda_0}\) to
          \(Y_\lambda\), respectively.
          We then set \(T_{\lambda,r}\) to be
          the kernel of the corresponding morphism in the long exact sequence
          \begin{align}
            \cdots \longrightarrow
            \cH^i\mathop{\circ}\RRGGamma_{Y'_\lambda}
            &\longrightarrow
            \cH^i\mathop{\circ}\RRGGamma_{Y_\lambda}
            \longrightarrow
            \cH^i\mathop{\circ}\RRGGamma_{Y_\lambda - Y'_\lambda}
            \longrightarrow \cdots\nonumber
          \intertext{of functors on the category of regular holonomic
          \(\sD_{X_\lambda}\)-modules.
          As a functor
          on the category of quasi-coherent \(\cO_{X_\lambda}\)-modules,
          \(T_{\lambda,r}\) is the
          kernel of the corresponding morphism in the long exact sequence}
            \cdots \longrightarrow \sH^i_{Y'_\lambda}(\,\cdot\,) &\longrightarrow
            \sH^i_{Y_\lambda}(\,\cdot\,) \longrightarrow \sH^i_{Y_\lambda -
            Y'_\lambda}(\,\cdot\,) \longrightarrow \cdots\label{eq:leswherekercomesfrom}
          \end{align}
      \end{enumerate}
      Finally, we set
      \(T_\lambda \coloneqq T_{r_\lambda} \mathop{\circ} T'_\lambda\).
  \end{enumerate}
\end{setup}
We show that the pullback maps connecting the \(T_\lambda(\cO_{X_\lambda})\)
are morphisms of \(\sD\)-modules, and the direct limit of these pullback
maps is \(T(\cO_X)\).
\begin{proposition}\label{prop:perversecohunderlimdxmod}
  Fix notation as in Setup \ref{setup:approx}.
  For every \(\mu \ge \lambda \ge \lambda_0\),
  the pullback maps
  \begin{equation}\label{eq:pullbackmaprh}
    \prescript{a}{}{\varphi}_{\lambda\mu}^*\,T_\lambda(\cO_{X_\lambda}) \longrightarrow
    T_\mu(\cO_{X_\mu})
  \end{equation}
  are morphisms of \(\sD_{X_\mu}\)-modules.
  Moreover, we have the isomorphisms
  \begin{align}
    T(\cO_X) &\overset{\sim}{\longleftarrow} \varinjlim_{\lambda \ge \lambda_0}
    \Bigl\{
      \Bigl(\prescript{a}{}{\varphi}_\lambda^{-1}
      \,T_\lambda(\cO_{X_\lambda}),
      \prescript{a}{}{\varphi}_\lambda^{-1}
      \,T_\lambda(\cO_{X_\lambda}) \longrightarrow
      \prescript{a}{}{\varphi}_\mu^{-1}
      \,T_\mu(\cO_{X_\mu})\Bigr)
    \Bigr\}\nonumber\\
    &\overset{\sim}{\longrightarrow} \varinjlim_{\lambda \ge \lambda_0}
    \varinjlim_{\mu \ge \lambda}
    \Bigl\{
      \Bigl(\prescript{a}{}{\varphi}_\mu^{-1}
      \prescript{a}{}{\varphi}_{\lambda\mu}^*
      \,T_\lambda(\cO_{X_\lambda}),
      \prescript{a}{}{\varphi}_\mu^{-1}
      \prescript{a}{}{\varphi}_{\lambda\mu}^*
      \,T_\lambda(\cO_{X_\lambda}) \longrightarrow
      \prescript{a}{}{\varphi}_\mu^{-1}
      \,T_\mu(\cO_{X_\mu})\Bigr)
    \Bigr\}\label{eq:toxdirlim}\\
    &\overset{\sim}{\longrightarrow} \varinjlim_{\lambda \ge \lambda_0}
    \Bigl\{
      \Bigl(\prescript{a}{}{\varphi}_{\lambda}^*
      \,T_\lambda(\cO_{X_\lambda}),
      \prescript{a}{}{\varphi}_{\lambda}^*
      \,T_\lambda(\cO_{X_\lambda}) \longrightarrow
      \prescript{a}{}{\varphi}_\mu^{*}
      \,T_\mu(\cO_{X_\mu})\Bigr)
    \Bigr\}\nonumber
  \end{align}
  as Abelian sheaves, whose composition is an isomorphism of
  \(\cO_X\)-modules.
\end{proposition}
\begin{proof}
  The isomorphisms in \eqref{eq:toxdirlim} follow from
  Gabber's version of Grothendieck's limit theorem for local cohomology
  \cite[Proposition 5.2]{Gab04} and by \cite[Chapter 0, Lemma 4.2.7]{FK18} and
  its proof.
  The composition is an isomorphism of \(\cO_X\)-modules by \cite[Theorem
  3.13]{Mur25}.\smallskip
  \par It remains to show that \eqref{eq:pullbackmaprh} is a morphism of \(\sD\)-modules.
  Fix notation as in Setup
  \ref{setup:approx}, where in particular, we denote
  \(T = T_r \mathop{\circ} \cdots
  \mathop{\circ} T_2 \mathop{\circ} T_1\).
  We induce on \(r\).
  If \(r = 0\), the map \eqref{eq:pullbackmaprh} is the identity map on
  \(\cO_{X_\mu}\), and hence there is nothing to show.
  Now suppose that \(r > 0\).
  Factoring \eqref{eq:pullbackmaprh} as
  \[
    \prescript{a}{}{\varphi}_{\lambda\mu}^*\,T_{\lambda,r}
    \bigl(T'_\lambda(\cO_{X_\lambda})\bigr)
    \longrightarrow
    T_{\mu,r}\bigl(\prescript{a}{}{\varphi}_{\lambda\mu}^*\,
    T'_\lambda(\cO_{X_\lambda})\bigr)
    \longrightarrow
    T_{\mu,r}\bigl(
    T'_\mu(\cO_{X_\mu})\bigr)
  \]
  the second map is a morphism of \(\sD_{X_\mu}\)-modules by the inductive
  hypothesis.
  It therefore suffices to show that the first map in this factorization is
  a morphism of \(\sD_{X_\mu}\)-modules.
  We will show that
  \begin{equation}\label{eq:firstmap}
    \prescript{a}{}{\varphi}_{\lambda\mu}^*\mathop{\circ}T_{\lambda,r}
    \longrightarrow
    T_{\mu,r} \mathop{\circ} \prescript{a}{}{\varphi}_{\lambda\mu}^*
  \end{equation}
  is a morphism of functors on \(\DD^b_\rh(\sD_{X_\lambda})\).
  For the proof below, we recall that
  \[
    f^\dagger \coloneqq \LL f^*\bigl[\dim(Y) - \dim(X)\bigr]
  \]
  for morphisms \(f\colon Y \to X\) by definition
  (see \eqref{eq:deffdagger}).
  With this notation, \eqref{eq:pullbackmaprh} is written as
  \[
    \cH^{\dim(X) - \dim(Y)}\prescript{a}{}{\varphi}_{\lambda\mu}^\dagger
    \,T_\lambda(\cO_{X_\lambda}) \longrightarrow
    T_\mu(\cO_{X_\mu}).\smallskip
  \]
  \par We first consider the case when \(T_{\lambda,r}\) is a local cohomology
  functor with support in a closed set \(Y'_\lambda\) or an open subset
  \(X_\lambda - Y'_\lambda\), using notation from Definition
  \ref{def:lyubeznikfunctors} and Setup
  \ref{setup:approx}.
  Consider the isomorphism of distinguished triangles
  \[
    \begin{tikzcd}
      \prescript{a}{}{\varphi}_{\lambda\mu}^\dagger \mathop{\circ}
      \dar[sloped]{\sim}
      \RRGGamma_{Y_\lambda'} \rar
      & \prescript{a}{}{\varphi}_{\lambda\mu}^\dagger \rar \dar[equal]
      & \prescript{a}{}{\varphi}_{\lambda\mu}^\dagger \mathop{\circ}
      \RRGGamma_{X_\lambda - Y_\lambda'}
      \rar{+1} \dar[sloped]{\sim}
      & {}\\
      \RRGGamma_{Y_\mu'} \mathop{\circ}
      \prescript{a}{}{\varphi}_{\lambda\mu}^\dagger \rar
      & \prescript{a}{}{\varphi}_{\lambda\mu}^\dagger \rar
      & \RRGGamma_{X_\mu - Y_\mu'} \mathop{\circ}
      \prescript{a}{}{\varphi}_{\lambda\mu}^\dagger \rar{+1}
      & {}
    \end{tikzcd}
  \]
  of functors on \(\DD^b_\rh(\sD_{X_\lambda})\) from \cite[Chapitre I,
  Proposition 6.3.5]{Meb89}.
  Pre- and post-composing by the edge maps for the \(E_2\) spectral sequence
  from \cite[Chapitre III, Proposition 4.4.6]{Ver67}, we see that
  \eqref{eq:firstmap} is a morphism of functors on
  \(\DD^b_\rh(\sD_{X_\lambda})\).\smallskip
  \par Next, we consider the case when \(T_{\lambda,r}\) is a local cohomology
  functor with support in a locally closed subset \(Y_\lambda - Y'_\lambda\) for
  \(Y_\lambda,Y'_\lambda\) closed in \(X_\lambda\),
  using notation from Definition \ref{def:lyubeznikfunctors} and Setup
  \ref{setup:approx}.
  We have the commutative diagram of distinguished triangles
  \[
    \begin{tikzcd}[baseline=(midsim.base)]
      \prescript{a}{}{\varphi}_{\lambda\mu}^\dagger \mathop{\circ}
      \arrow[d,"\sim"{sloped,name=midsim}]
      \RRGGamma_{Y_\lambda'} \rar
      & \prescript{a}{}{\varphi}_{\lambda\mu}^\dagger \mathop{\circ}
      \RRGGamma_{Y_\lambda} \rar \dar[sloped]{\sim}
      & \prescript{a}{}{\varphi}_{\lambda\mu}^\dagger \mathop{\circ}
      \RRGGamma_{Y_\lambda - Y_\lambda'}
      \rar{+1} \dar
      & {}\\
      \RRGGamma_{Y_\mu'} \mathop{\circ}
      \prescript{a}{}{\varphi}_{\lambda\mu}^\dagger \rar
      & \RRGGamma_{Y_\mu} \mathop{\circ}
      \prescript{a}{}{\varphi}_{\lambda\mu}^\dagger \rar
      & \RRGGamma_{Y_\mu - Y_\mu'} \mathop{\circ}
      \prescript{a}{}{\varphi}_{\lambda\mu}^\dagger \rar{+1}
      & {}
    \end{tikzcd}
  \]
  of functors on \(\DD^b_\rh(\sD_{X_\lambda})\), where the left and middle
  vertical maps are quasi-isomorphisms by \cite[Chapitre I, Proposition
  6.3.5]{Meb89}.
  Thus, the right vertical map is a quasi-isomorphism by \cite[Chapitre II,
  Corollaire 1.2.3]{Ver67}.
  Finally, the same argument as before using the \(E_2\) spectral sequence
  from \cite[Chapitre III, Proposition 4.4.6]{Ver67} shows that
  \eqref{eq:firstmap} is a morphism of functors on
  \(\DD^b_\rh(\sD_{X_\lambda})\).\smallskip
  \par Finally, we consider the case when \(T_{\lambda,r}\) is the kernel of a
  map in the long exact sequence \eqref{eq:leswherekercomesfrom}.
  Applying the functor \(\prescript{a}{}{\varphi}_{\lambda\mu}^*\), we then have
  the commutative diagram
  \[
    \begin{tikzcd}
      \cdots \rar
      & \prescript{a}{}{\varphi}_{\lambda\mu}^*\,
      \sH^i_{Y'_\lambda}(\,\cdot\,) \rar\dar
      & \prescript{a}{}{\varphi}_{\lambda\mu}^*\,
      \sH^i_{Y_\lambda}(\,\cdot\,) \rar\dar
      & \prescript{a}{}{\varphi}_{\lambda\mu}^*\,
      \sH^i_{Y_\lambda - Y'_\lambda}(\,\cdot\,) \rar\dar
      & \cdots\\
      \cdots \rar
      & \sH^i_{Y'_\mu}\bigl(\prescript{a}{}{\varphi}_{\lambda\mu}^*
      (\,\cdot\,)\bigr) \rar
      & \sH^i_{Y_\mu}\bigl(\prescript{a}{}{\varphi}_{\lambda\mu}^*
      (\,\cdot\,)\bigr) \rar
      & \sH^i_{Y_\mu - Y'_\mu}\bigl(\prescript{a}{}{\varphi}_{\lambda\mu}^*
      (\,\cdot\,)\bigr) \rar
      & \cdots
    \end{tikzcd}
  \]
  of \(\sD_{X_\mu}\)-modules 
  where the top row is a complex that is
  not necessarily exact and the bottom row is exact.
  Here, we use the fact that the \(E_2\) spectral sequence from \cite[Chapitre
  III, Proposition 4.4.6]{Ver67} is functorial to ensure that the vertical maps
  (i.e., the maps \eqref{eq:pullbackmaprh}) make the diagram commute.
  We can then construct the map \eqref{eq:pullbackmaprh} for kernels of maps
  apppearing in \eqref{eq:leswherekercomesfrom} by the universal property of
  kernels.
\end{proof}
We now construct the perverse sheaves corresponding to the \(T_\lambda\).
\begin{setup}\label{setup:approxperverse}
  We fix notation as in Setup \ref{setup:approx}.
  Let \(\ell > 0\) be a prime number.
  The perverse sheaves \(F_{T_\lambda}\) on \(X_\lambda\)
  are constructed for every \(\lambda \ge \lambda_0\) as follows.
  If \(r = 0\), then
  \[
    F_{T_\lambda} = F_\id \coloneqq \QQ_\ell[d_\lambda],
  \]
  Now suppose \(r > 0\).
  \begin{enumerate}[label=\((\roman*)\),ref=\roman*]
    \item Suppose\label{setup:approxperverse2i} \(T_r = \sH^{i}_{Y}(\,\cdot\,)\)
      for a locally closed subset \(Y\) of \(X\).
      With notation as in Setup
      \ref{setup:approx2}\((\ref{setup:approx2i})\),
      we set
      \[
        F_{T_\lambda} \coloneqq \prescript{\fp}{}{\mathcal{H}}^{i}
        j_{U_\lambda*}i_{Y_\lambda*}i_{Y_\lambda}^!j_{U_\lambda}^*
        \,F_{T'_\lambda}.
      \]
    \item
      Suppose\label{setup:approxperverse2ii} \(T_r\) is the kernel of a morphism in the long exact sequence
      \begin{align*}
        \cdots \longrightarrow \sH^i_{Y'}(\,\cdot\,) &\longrightarrow
        \sH^i_{Y}(\,\cdot\,)
        \longrightarrow \sH^i_{Y-Y'}(\,\cdot\,) \longrightarrow \cdots
      \intertext{with notation as in Setup
      \ref{setup:approx2}\((\ref{setup:approx2ii})\).
      We then set \(F_{T_\lambda}\) to be the kernel of
      the corresponding morphism in the long exact sequence}
        \cdots \longrightarrow
        \prescript{\fp}{}{\mathcal{H}}^{i}
        j_{U_\lambda*}(i_{Y_\lambda} \mathop{\circ} i_{\lambda})_*
        (i_{Y_\lambda} \mathop{\circ} i_\lambda)^!
        j_{U_\lambda}^*
        &\longrightarrow
        \prescript{\fp}{}{\mathcal{H}}^{i}
        j_{U_\lambda*}i_{Y_\lambda*}
        i_{Y_\lambda}^!
        j_{U_\lambda}^*
        \longrightarrow
        \prescript{\fp}{}{\mathcal{H}}^{i}
        j_{U_\lambda*}i_{Y_\lambda*}
        j_{\lambda*}j_\lambda^*
        i_{Y_\lambda}^!
        j_{U_\lambda}^*
        \longrightarrow \cdots
      \end{align*}
      of functors applied to \(F_{T_\lambda'}\).
  \end{enumerate}
\end{setup}
\par
By
the Riemann--Hilbert correspondence (Theorem \ref{thm:riemannhilbert})
and the description of local cohomology functors in
\cite[Chapter VI, Theorem 7.13\((ii)\)]{BGKHME87}, we have
\[
  \varepsilon^*\bigl(F_{T_\lambda} \otimes_{\QQ_\ell} \CC\bigr) \cong
  \DR_{X_\lambda}\bigl(T_\lambda(\cO_{X_\lambda})\bigr)
\]
where \(\varepsilon\colon X_\lambda^\an \to X_{\lambda,\et}\) is the morphism of topoi
from Theorem \ref{thm:bbdg612}.
\begin{theorem}\label{thm:perversecohunderlimperv}
  Fix notation as in Setup \ref{setup:approx} and Setup \ref{setup:approxperverse}.
  \begin{enumerate}[label=\((\roman*)\),ref=\roman*]
    \item For\label{thm:perversecohunderlimpervpullbacks}
      every \(\mu \ge \lambda \ge \lambda_0\),
      there are pullback maps
      \begin{equation}
          \begin{aligned}
            \prescript{a}{}{\varphi}_{\lambda\mu}^![d_\lambda-d_\mu]
            (d_\lambda-d_\mu)\,F_{T_\lambda}
            &\longrightarrow F_{T_\mu}\\
            \prescript{\fp}{}{\cH}^{d_\lambda-d_\mu}
            \prescript{a}{}{\varphi}_{\lambda\mu}^!(d_\lambda-d_\mu)\,F_{T_\lambda}
            &\longrightarrow F_{T_\mu}
          \end{aligned}
        \label{eq:pullbackafterrh}
      \end{equation}
      compatible with the pullback maps \eqref{eq:pullbackmaprh} under
      the Riemann--Hilbert correspondence (Theorem
      \ref{thm:riemannhilbert}) after
      analytification (Theorem \ref{thm:bbdg612}) and extending to scalars to
      \(\CC\).
      The maps \eqref{eq:pullbackafterrh} are isomorphic to maps
      \begin{equation}
        \begin{aligned}
          \prescript{a}{}{\varphi}_{\lambda\mu}^*[d_\mu-d_\lambda]
          \,F_{T_\lambda}
          &\longrightarrow F_{T_\mu}\\
          \prescript{\fp}{}{\cH}^{d_\mu-d_\lambda}
          \prescript{a}{}{\varphi}_{\lambda\mu}^*\,F_{T_\lambda}
          &\longrightarrow F_{T_\mu}\mathrlap{.}
        \end{aligned}\label{eq:pullbackafterrh2}
      \end{equation}
    \item The\label{thm:perversecohunderlimpervdirsys}
      pullback maps \eqref{eq:pullbackafterrh} and \eqref{eq:pullbackafterrh2}
      fit into commutative diagrams of the form
      \begin{equation}
        \begin{tikzcd}[column sep=-9.5em,
          ampersand replacement=\&]
          \prescript{\fp}{}{\cH}^{d_\lambda-d}
          \prescript{a}{}{\varphi}_{\lambda}^{!}(d_\lambda-d)\,
          F_{T_\lambda}
          \arrow{rr}
          \& \&
          \prescript{\fp}{}{\cH}^{d_\mu-d}
          \prescript{a}{}{\varphi}_{\mu}^{!}(d_\mu-d)\,
          F_{T_\mu}\\
          \& \prescript{\fp}{}{\cH}^{d_\mu-d}
          \prescript{a}{}{\varphi}_{\mu}^{!}(d_\mu-d)\,
          \prescript{\fp}{}{\cH}^{d_{\lambda}-d_\mu}
          \prescript{a}{}{\varphi}_{\lambda\mu}^!(d_{\lambda}-d_\mu)
          \,F_{T_\lambda}
          \arrow[dash]{ul}[sloped]{\sim}
          \arrow{ur}
          \\[-1.8em]          
          \prescript{\fp}{}{\cH}^{d-d_\lambda}
          \prescript{a}{}{\varphi}_{\lambda}^{*}\,
          F_{T_\lambda}
          \arrow{rr}
          \& \&
          \prescript{\fp}{}{\cH}^{d-d_\mu}
          \prescript{a}{}{\varphi}_{\mu}^{*}\,
          F_{T_\mu}\\
          \& \prescript{\fp}{}{\cH}^{d-d_\mu}
          \prescript{a}{}{\varphi}_{\mu}^{*}\,
          \prescript{\fp}{}{\cH}^{d_\mu-d_{\lambda}}
          \prescript{a}{}{\varphi}_{\lambda\mu}^*
          \,F_{T_\lambda}
          \arrow[dash]{ul}[sloped]{\sim}
          \arrow{ur}
        \end{tikzcd}
        \label{eq:dirsysperv}
      \end{equation}
      where the right diagonal maps are the results of applying
      \(\prescript{\fp}{}{\cH}^{d_\mu-d}
      \prescript{a}{}{\varphi}_{\lambda}^{!}(d_\mu-d)\)
      and
      \(\prescript{\fp}{}{\cH}^{d-d_\mu}
      \prescript{a}{}{\varphi}_{\mu}^{*}\)
      to the pullback maps in
      \eqref{eq:pullbackafterrh} and \eqref{eq:pullbackafterrh2}, respectively.
  \end{enumerate}
\end{theorem}
\begin{proof}
  We prove \((\ref{thm:perversecohunderlimpervpullbacks})\) and
  \((\ref{thm:perversecohunderlimpervdirsys})\)
  by induction on \(r\).
  If \(r = 0\), then the maps \eqref{eq:pullbackafterrh} and
  \eqref{eq:pullbackafterrh2} are the identity maps
  on \(\QQ_\ell[d_\mu]\)
  by \citeleft\citen{ILO14}\citemid
  Expos\'e XVI, Corollaire 3.1.2\citeright, proving
  \((\ref{thm:perversecohunderlimpervpullbacks})\).
  Moreover, the maps in \eqref{eq:dirsysperv} are the identity map on
  \(\QQ_\ell[d]\)
  by Remark \ref{rem:cfpullbackql},
  proving
  \((\ref{thm:perversecohunderlimpervdirsys})\).\smallskip
  \par For \(r > 0\), we first note that it suffices to construct the maps and
  diagrams involving \((\,\cdot\,)^!\).
  This is because
  \begin{align*}
    \prescript{a}{}{\varphi}_{\lambda\mu}^![d_\lambda-d_\mu] (d_\lambda-d_\mu)
    &=
    \RRHHom\Bigl(\prescript{a}{}{\varphi}_{\lambda\mu}^*\RRHHom\bigl(\,\cdot\,,
    E[2d_\lambda](d_\lambda)
    \bigr),E[2d_\mu](d_\mu)\Bigr)[d_\lambda-d_\mu] (d_\lambda-d_\mu)\\
    &\cong
    \RRHHom\Bigl(\RRHHom\bigl(\prescript{a}{}{\varphi}_{\lambda\mu}^*(\,\cdot\,),
    E[2d_\lambda]
    \bigr),E[2d_\mu]\Bigr)[d_\lambda-d_\mu]\\
    &\cong
    \RRHHom\Bigl(\RRHHom\bigl(\prescript{a}{}{\varphi}_{\lambda\mu}^*(\,\cdot\,),
    E[2d_\mu]
    \bigr),E[2d_\mu]\Bigr)[d_\mu-d_\lambda]\\
    &\cong \prescript{a}{}{\varphi}_{\lambda\mu}^*[d_\mu-d_\lambda]\\
    \prescript{a}{}{\varphi}_{\lambda}^![d_\lambda-d] (d_\lambda-d)
    &=
    \RRHHom\Bigl(\prescript{a}{}{\varphi}_{\lambda}^*\RRHHom\bigl(\,\cdot\,,
    E[2d_\lambda](d_\lambda)
    \bigr),E[2d](d)\Bigr)[d_\lambda-d] (d_\lambda-d)\\
    &\cong
    \RRHHom\Bigl(\RRHHom\bigl(\prescript{a}{}{\varphi}_{\lambda}^*(\,\cdot\,),
    E[2d_\lambda]
    \bigr),E[2d]\Bigr)[d_\lambda-d]\\
    &\cong
    \RRHHom\Bigl(\RRHHom\bigl(\prescript{a}{}{\varphi}_{\lambda}^*(\,\cdot\,),
    E[2d]
    \bigr),E[2d]\Bigr)[d-d_\lambda]\\
    &\cong \prescript{a}{}{\varphi}_{\lambda}^*[d-d_\lambda]
  \end{align*}
  where the first isomorphisms hold by \cite[Theorem 6.3\((iii)\)]{Eke90}
  and the last isomorphisms hold by duality \cite[Expos\'e XVII, Th\'eor\`eme
  0.2]{ILO14}.
  We construct the maps \eqref{eq:pullbackafterrh}
  as follows.
  We first prove \((\ref{thm:perversecohunderlimpervpullbacks})\) when
  \(T_{\lambda,r}\) is a local cohomology
  functor with support in a closed set \(i_\lambda\colon Y'_\lambda
  \hookrightarrow X_\lambda\) or an open subset
  \(j_\lambda\colon X_\lambda - Y'_\lambda \hookrightarrow
  X_\lambda\),
  using notation from Definition 
  \ref{def:lyubeznikfunctors}, Setup
  \ref{setup:approx2}\((\ref{setup:approx2i})\),
  and Setup
  \ref{setup:approxperverse}\((\ref{setup:approxperverse2i})\).
  We have the isomorphism of distinguished triangles of triangulated functors
  \begin{equation}\label{eq:lcclosedpervtri}
    \begin{tikzcd}[baseline=(isos.base)]
      \prescript{a}{}{\varphi}_{\lambda\mu}^!
      \mathop{\circ}
      i_{\lambda*} i_\lambda^{!}
      \dar[sloped]{\sim}
      \rar
      & \prescript{a}{}{\varphi}_{\lambda\mu}^!
      \rar \dar[equal]
      & \prescript{a}{}{\varphi}_{\lambda\mu}^!
      \mathop{\circ}
      j_{\lambda*} j_\lambda^{*}
      \rar{+1} \arrow[d,"\sim"{sloped,name=isos}]
      & {}\\
      i_{\mu*} i_\mu^{!} \mathop{\circ}
      \prescript{a}{}{\varphi}_{\lambda\mu}^!
      \rar
      & \prescript{a}{}{\varphi}_{\lambda\mu}^!
      \rar
      & j_{\mu*} j_\mu^{*} \mathop{\circ}
      \prescript{a}{}{\varphi}_{\lambda\mu}^!
      \rar{+1}
      & {}
    \end{tikzcd}
  \end{equation}
  by base change
  \citeleft\citen{SGA43}\citemid Expos\'e XVI, Th\'eor\`eme 1.1, Expos\'e XVII,
  Proposition 6.1.4\((iii)\), and
  Expos\'e XVIII, (3.1.14)\((a)\)\citepunct
  \citen{Eke90}\citemid Theorem 6.3\((iii)\)\citeright\ and
  \cite[Expos\'e I, Proposition 1.12]{SGA5}.
  Similarly to Proposition \ref{prop:perversecohunderlimdxmod}
  but using the cohomological functor associated to the perverse \(t\)-structure
  instead of the ordinary \(t\)-structure in the spectral sequence
  \cite[Chapitre III, Proposition 4.4.6]{Ver67} (see also
  \citeleft\citen{Del94}\citemid \S1\citepunct \citen{dCM10}\citemid
  Definition 3.6.1\citeright),
  the edge maps for the \(E_2\) spectral sequence yield the pullback maps
  \begin{alignat*}{5}
    &\prescript{\fp}{}{\cH}^{d_\lambda-d_{\mu}}
    \prescript{a}{}{\varphi}_{\lambda\mu}^!
    &{}\mathop{\circ}{}&
    \prescript{\fp}{}{\cH}^i
    i_{\lambda*} i_\lambda^{!}
    &{}\longrightarrow{}&& 
    \prescript{\fp}{}{\cH}^i
    i_{\mu*} i_\mu^{!} &{}\mathop{\circ}{}&
    \prescript{\fp}{}{\cH}^{d_\lambda-d_{\mu}}
    \prescript{a}{}{\varphi}_{\lambda\mu}^!
    ,\\
    &\prescript{\fp}{}{\cH}^{d_\lambda-d_{\mu}}
    \prescript{a}{}{\varphi}_{\lambda\mu}^!
    &{}\mathop{\circ}{}&
    \prescript{\fp}{}{\cH}^i
    j_{\lambda*} j_\lambda^{*}
    &{}\longrightarrow{}&& 
    \prescript{\fp}{}{\cH}^i
    j_{\mu*} j_\mu^{*} &{}\mathop{\circ}{}&
    \prescript{\fp}{}{\cH}^{d_\lambda-d_{\mu}}
    \prescript{a}{}{\varphi}_{\lambda\mu}^!.
  \end{alignat*}
  Twisting by \((d_\lambda-d_\mu)\),
  applying these maps to \(F_{T'_\lambda}\), and post-composing with the
  pullback map for \(F_{T'_\lambda}\), which exists by inductive hypothesis,
  we obtain the pullback maps \eqref{eq:pullbackafterrh} when \(T_{\lambda,r}\)
  is a local cohomology functor with closed or open support.\smallskip
  \par Next, we prove \((\ref{thm:perversecohunderlimpervpullbacks})\)
  when \(T_{\lambda,r}\) is a local cohomology
  functor with support in a locally closed subset \(Y_\lambda - Y'_\lambda\)
  using notation from Definition \ref{def:lyubeznikfunctors}, Setup
  \ref{setup:approx2}\((\ref{setup:approx2i})\),
  and Setup
  \ref{setup:approxperverse}\((\ref{setup:approxperverse2i})\).
  We then have the commutative diagram of distinguished triangles
  \begin{equation}\label{eq:locclosedlcperv}
    \begin{tikzcd}[baseline=(midsim.base)]
      \prescript{a}{}{\varphi}_{\lambda\mu}^!
      \mathop{\circ}
      (i_{Y_\lambda} \mathop{\circ} i_{\lambda})_*
      (i_{Y_\lambda} \mathop{\circ} i_{\lambda})^{!}
      \rar \dar[sloped]{\sim}
      & \prescript{a}{}{\varphi}_{\lambda\mu}^!
      \mathop{\circ}
      i_{Y_\lambda*}
      i_{Y_\lambda}^{!}
      \rar \arrow[d,"\sim"{sloped,name=midsim}]
      & \prescript{a}{}{\varphi}_{\lambda\mu}^!
      \mathop{\circ}
      i_{Y_\lambda*}
      j_{\lambda*}
      j_\lambda^*
      i_{Y_\lambda}^{!}
      \rar{+1} \dar
      & {}\\
      (i_{Y_\mu} \mathop{\circ} i_{\mu})_*
      (i_{Y_\mu} \mathop{\circ} i_{\mu})^{!}
      \mathop{\circ}
      \prescript{a}{}{\varphi}_{\lambda\mu}^!
      \rar
      & i_{Y_\mu*}
      i_{Y_\mu}^{!}
      \mathop{\circ}
      \prescript{a}{}{\varphi}_{\lambda\mu}^!
      \rar
      & i_{Y_\mu*}
      j_{\mu*}
      j_\mu^*
      i_{Y_\mu}^{!} \mathop{\circ}
      \prescript{a}{}{\varphi}_{\lambda\mu}^!
      \rar{+1}
      & {}
    \end{tikzcd}
  \end{equation}
  of functors on \(\DD^b_c(X_\lambda,\QQ_\ell)\), where the left and middle
  vertical maps are quasi-isomorphisms by the closed support case in the
  previous paragraph, and the right vertical map is constructed in the same
  manner by base change
  \citeleft\citen{SGA43}\citemid Expos\'e XVII, Proposition 2.1.3 and
  Expos\'e XVIII, (3.1.14)\((a)\)\citepunct
  \citen{Eke90}\citemid Theorem 6.3\((iii)\)\citeright.
  Thus, the right vertical map is a quasi-isomorphism by
  \cite[Chapitre II, Corollaire 1.2.3]{Ver67}.
  Finally, using the \(E_2\) spectral sequence as before
  from \cite[Chapitre III, Proposition 4.4.6]{Ver67}
  yields the pullback maps \eqref{eq:pullbackafterrh} when \(T_{\lambda,r}\) is
  a local cohomology functor with locally closed support.\smallskip
  \par We now show \((\ref{thm:perversecohunderlimpervdirsys})\)
  when \(T_{\lambda,r}\) is a local
  cohomology functor with closed, open, or locally closed support.
  Applying
  \(\prescript{a}{}{\varphi}_{\mu}^{*}[d_\mu-d]\) and
  \(\prescript{a}{}{\varphi}_{\mu}^{!}[d_\mu-d]\)
  to the right vertical maps in
  \eqref{eq:locclosedlcperv}, we obtain the
  right diagonal map in the diagram
  \begin{equation}\label{eq:dirsyspervfunctors}
    \begin{tikzcd}[column sep=-9em,row sep=large]
      \prescript{a}{}{\varphi}_{\lambda}^{!}[d_\lambda-d]
      \mathop{\circ}
      i_{Y_\lambda*}
      j_{\lambda*}
      j_\lambda^*
      i_{Y_\lambda}^{!}F_{T'_\lambda}
      \arrow{rr}
      & & \prescript{a}{}{\varphi}_{\mu}^{!}[d_\mu-d]
      \mathop{\circ}
      i_{Y_\mu*}
      j_{\mu*}
      j_\mu^*
      i_{Y_\mu}^{!} \mathop{\circ}
      \prescript{a}{}{\varphi}_{\lambda\mu}^!
      [d_\lambda-d_\mu]\,F_{T'_\lambda}\\
      & \prescript{a}{}{\varphi}_{\mu}^{!}[d_\mu-d] \mathop{\circ}
      \prescript{a}{}{\varphi}_{\lambda\mu}^![d_\lambda-d_\mu] \mathop{\circ}
      i_{Y_\lambda*}
      j_{\lambda*}
      j_\lambda^*
      i_{Y_\lambda}^{!}F_{T'_\lambda}
      \arrow[ul,dash,"\sim"{sloped}]
      \arrow[end anchor={[xshift=-2em]}]{ur}
    \end{tikzcd}
  \end{equation}
  The left diagonal map
  is obtained from the isomorphism
  \begin{align}
    \prescript{a}{}{\varphi}_{\mu}^{!} \mathop{\circ}
    \prescript{a}{}{\varphi}_{\lambda\mu}^!
    &= \mathbf{D}_X \mathop{\circ} \prescript{a}{}{\varphi}_{\mu}^*
    \mathop{\circ} \mathbf{D}_{X_\mu}
    \mathop{\circ} \prescript{a}{}{\varphi}_{\lambda\mu}^!\nonumber\\
    &\cong \mathbf{D}_X \mathop{\circ} \prescript{a}{}{\varphi}_{\mu}^*
    \mathop{\circ} \prescript{a}{}{\varphi}_{\lambda\mu}^*
    \mathop{\circ} \mathbf{D}_{X_\lambda}\nonumber\\
    &= \mathbf{D}_X \mathop{\circ} \prescript{a}{}{\varphi}_{\lambda}^*
    \mathop{\circ} \mathbf{D}_{X_\lambda}\nonumber\\
    &= \prescript{a}{}{\varphi}_{\lambda}^{!}\nonumber
  \intertext{where the middle isomorphism holds by \cite[Expos\'e I, Proposition
  1.12\((b)\)]{SGA5}.
  The left diagonal map in \eqref{eq:dirsysperv} is an isomorphism since the
  bottom term in \eqref{eq:dirsysperv} appears in the top right corner of the 
  \(E_2\) spectral sequence from \cite[Chapitre III, Proposition
  4.4.6]{Ver67}.
  The horizontal maps in \eqref{eq:dirsyspervfunctors} are constructed using
  base change isomorphisms as before and
  the isomorphisms}
  \begin{split}
    \prescript{a}{}{\varphi}_{\lambda}^{!}[d_\lambda-d]
    \mathop{\circ}
    i_{Y_\lambda*}
    j_{\lambda*}
    j_\lambda^*
    i_{Y_\lambda}^{!}
    &= \mathbf{D}_X \mathop{\circ} \prescript{a}{}{\varphi}_{\lambda}^*
    \mathop{\circ} \mathbf{D}_{X_\lambda}
    [d_\lambda-d]
    \mathop{\circ} i_{Y_\lambda*}
    j_{\lambda*}
    j_\lambda^*
    i_{Y_\lambda}^{!}\\
    &\cong \mathbf{D}_X \mathop{\circ} \prescript{a}{}{\varphi}_{\lambda}^*
    \mathop{\circ} i_{Y_\lambda*}
    j_{\lambda!}
    j_\lambda^*
    i_{Y_\lambda}^{*}
    \mathop{\circ} \mathbf{D}_{X_\lambda}
    [d_\lambda-d]\\
    &\cong \mathbf{D}_X \mathop{\circ} \prescript{a}{}{\varphi}_{\lambda}^*
    \mathop{\circ} i_{Y_\lambda*}
    j_{\lambda!}
    j_\lambda^*
    i_{Y_\lambda}^{*}
    \mathop{\circ} \mathbf{D}_{X_\lambda}
    [d_\lambda-d]\\
    &\cong \mathbf{D}_X
    \mathop{\circ} i_{Y*}
    j_{!}
    j^*
    i_{Y}^{*} \mathop{\circ} \prescript{a}{}{\varphi}_{\lambda}^*
    \mathop{\circ} \mathbf{D}_{X_\lambda}
    [d_\lambda-d]\\
    &\cong i_{Y*}j_*j_*i_Y^! \mathop{\circ} \mathbf{D}_X \mathop{\circ}
    \prescript{a}{}{\varphi}_{\lambda}^* \mathop{\circ}
    \mathbf{D}_{X_\lambda}[d_\lambda-d]\\
    &= i_{Y*}j_*j_*i_Y^! \mathop{\circ}
    \prescript{a}{}{\varphi}_{\lambda}^{!}[d_\lambda-d]
  \end{split}\label{eq:basechangevarphi}
  \end{align}
  where the isomorphism in the fourth row holds by
  proper base change \citeleft\citen{SGA43}\citemid Expos\'e XII,
  Th\'eor\`eme 5.1 and Expos\'e XVII, Th\'eor\`eme 5.2.6\citepunct
  \citen{Eke90}\citemid Theorem 6.3\((iii)\)\citeright\ 
  and the other
  isomorphisms hold by \cite[Expos\'e I, Proposition 1.12]{SGA5}.
  The diagram \eqref{eq:dirsyspervfunctors} commutes since these base change
  isomorphisms are compatible with composition \cite[Expos\'e XII, Proposition
  4.4 and p.\ 578]{SGA43}.
  Finally, we construct the commutative diagram \eqref{eq:dirsysperv}.
  Compose the
  maps in \eqref{eq:dirsyspervfunctors} with the pullback maps
  \eqref{eq:pullbackafterrh} and the pullback maps for \(T'\), which exist by
  induction on \(r\).
  Using the spectral
  sequence \cite[Chapitre III, Proposition 4.4.6]{Ver67} as before, we obtain
  \eqref{eq:dirsysperv}.\smallskip
  \par Next, we prove \((\ref{thm:perversecohunderlimpervdirsys})\)
  when \(T_{\lambda,r}\) is the kernel of a
  map in the long exact sequence \eqref{eq:leswherekercomesfrom}.
  Fix notation as in Definition \ref{def:lyubeznikfunctors}, Setup
  \ref{setup:approx2}\((\ref{setup:approx2ii})\),
  and Setup
  \ref{setup:approxperverse}\((\ref{setup:approxperverse2ii})\).
  We have the distinguished triangle
  \[
    j_{U_\lambda*}
    (i_{Y_\lambda} \mathop{\circ} i_{\lambda})_*
    (i_{Y_\lambda} \mathop{\circ} i_{\lambda})^{!} j_{U_\lambda}^*
    \longrightarrow
    j_{U_\lambda*}
    i_{Y_\lambda*}
    i_{Y_\lambda}^{!} j_{U_\lambda}^*
    \longrightarrow
    j_{U_\lambda*}
    i_{Y_\lambda*}
    j_{\lambda*}
    j_\lambda^*
    i_{Y_\lambda}^{!}j_{U_\lambda}^*
    \xrightarrow{+1}
  \]
  which induces the long exact sequence
  \begin{align}
    \cdots
    &\longrightarrow
    \prescript{\fp}{}{\cH}^i j_{U_\lambda*}
    (i_{Y_\lambda} \mathop{\circ} i_{\lambda})_*
    (i_{Y_\lambda} \mathop{\circ} i_{\lambda})^{!}
    j_{U_\lambda}^*\nonumber\\
    &\longrightarrow
    \prescript{\fp}{}{\cH}^i j_{U_\lambda*}
    i_{Y_\lambda*}
    i_{Y_\lambda}^{!} j_{U_\lambda}^*\label{eq:perverseleslc}\\
    &\longrightarrow
    \prescript{\fp}{}{\cH}^i j_{U_\lambda*}
    i_{Y_\lambda*}
    j_{\lambda*}
    j_\lambda^*
    i_{Y_\lambda}^{!}j_{U_\lambda}^*
    \longrightarrow \cdots.\nonumber
  \end{align}
  Applying the functor \(\prescript{\fp}{}{\cH}^{d_{\lambda}-d_\mu}
  \prescript{a}{}{\varphi}_{\lambda\mu}^!\),
  we then have the commutative diagram
  \[
    \mathclap{\begin{tikzcd}[ampersand replacement=\&,column sep=scriptsize]
      \vdots \dar \& \vdots \dar\\
      \prescript{\fp}{}{\cH}^{d_{\lambda}-d_\mu}
      \prescript{a}{}{\varphi}_{\lambda\mu}^!
      \mathop{\circ}
      \prescript{\fp}{}{\cH}^i j_{U_\lambda*}
      (i_{Y_\lambda} \mathop{\circ} i_{\lambda})_*
      (i_{Y_\lambda} \mathop{\circ} i_{\lambda})^{!} j_{U_\lambda}^*
      \dar \rar
      \& \prescript{\fp}{}{\cH}^i j_{U_\mu*}
      (i_{Y_\mu} \mathop{\circ} i_{\mu})_*
      (i_{Y_\mu} \mathop{\circ} i_{\mu})^{!} j_{U_\mu}^*
      \mathop{\circ} \prescript{\fp}{}{\cH}^{d_{\lambda}-d_\mu}
      \prescript{a}{}{\varphi}_{\lambda\mu}^! \dar\\
      \prescript{\fp}{}{\cH}^{d_{\lambda}-d_\mu}
      \prescript{a}{}{\varphi}_{\lambda\mu}^!
      \mathop{\circ}
      \prescript{\fp}{}{\cH}^i j_{U_\lambda*}
      i_{Y_\lambda*}
      i_{Y_\lambda}^{!} j_{U_\lambda}^* \dar \rar
      \& \prescript{\fp}{}{\cH}^i j_{U_\mu*}
      i_{Y_\mu*}
      i_{Y_\mu}^{!} j_{U_\mu}^*
      \mathop{\circ}
      \prescript{\fp}{}{\cH}^{d_{\lambda}-d_\mu}
      \prescript{a}{}{\varphi}_{\lambda\mu}^! \dar
      \\
      \prescript{\fp}{}{\cH}^{d_{\lambda}-d_\mu}
      \prescript{a}{}{\varphi}_{\lambda\mu}^!
      \mathop{\circ}
      \prescript{\fp}{}{\cH}^i j_{U_\lambda*}
      i_{Y_\lambda*}
      j_{\lambda*}
      j_\lambda^*
      i_{Y_\lambda}^{!}j_{U_\lambda}^* \dar \rar
      \& \prescript{\fp}{}{\cH}^i j_{U_\mu*}
      i_{Y_\mu*}
      j_{\mu*}
      j_\mu^*
      i_{Y_\mu}^{!}j_{U_\mu}^*
      \mathop{\circ}
      \prescript{\fp}{}{\cH}^{d_{\lambda}-d_\mu}
      \prescript{a}{}{\varphi}_{\lambda\mu}^!
      \dar\\
      \vdots \& \vdots
    \end{tikzcd}}
  \]
  where the left column is a complex that is
  not necessarily exact and the right column is exact.
  Here, we use the fact that the \(E_2\) spectral sequence from \cite[Chapitre
  III, Proposition 4.4.6]{Ver67} is functorial to ensure that the horizontal maps
  (i.e., the maps \eqref{eq:pullbackafterrh}) make the diagram commute.
  We can then construct \eqref{eq:pullbackafterrh} for kernels of maps
  apppearing in \eqref{eq:perverseleslc} by the universal property of
  kernels and twisting appropriately by \((d_\lambda-d_\mu)\).\smallskip
  \par For \((\ref{thm:perversecohunderlimpervdirsys})\), it remains to show
  \eqref{eq:dirsysperv} when \(T_{\lambda,r}\) is the kernel of a
  map in the long exact sequence \eqref{eq:leswherekercomesfrom}.
  For this, it suffices to take two copies of the commutative diagram
  \eqref{eq:dirsysperv} using local cohomology functors with different
  supports connected by the maps appearing in the long exact
  sequence \eqref{eq:perverseleslc} appropriately pulled back to \(X\).
  The statement \eqref{eq:dirsysperv}
  now follow by the universal property of
  kernels.\smallskip
  \par To finish the proof of \((\ref{thm:perversecohunderlimpervpullbacks})\),
  we note that the compatibility of the maps \eqref{eq:pullbackmaprh} and
  \(\varepsilon^*\eqref{eq:pullbackafterrh} \otimes_{\QQ_\ell} \CC\) under the
  Riemann--Hilbert correspondence (Theorem \ref{thm:riemannhilbert}, after
  extending the ground field to \(\CC\)) and the
  equivalence of categories in Theorem \ref{thm:bbdg612} follows by
  comparing the two constructions in
  Proposition \ref{prop:perversecohunderlimdxmod} and
  Theorem \ref{thm:perversecohunderlimperv}
  and the compatibility of the de Rham functor and
  \(\varepsilon^*\) with the sheaf operations.
\end{proof}

\subsection{The main comparison theorem}
\par We state our main theorem comparing associated points of local cohomology
modules to the simple components of an associated perverse sheaf.
When \(X\) is a smooth complex variety,
this comparison follows from \citeleft\citen{BBLSZ14}\citemid p.\ 516\citepunct
\citen{NB14}\citemid Remark 2.3\citeright\ together with the Riemann--Hilbert
correspondence (Theorem \ref{thm:riemannhilbert})
and
the comparison theorem for perverse sheaves in the analytic vs.\ \'etale
topologies (Theorem \ref{thm:bbdg612}).
\begin{theorem}\label{thm:assocaresimple}
  Fix notation as in Setup \ref{setup:approx} and Setup
  \ref{setup:approxperverse}.
  By Theorem
  \ref{thm:perversecohunderlimperv}\kern1pt\((\ref{thm:perversecohunderlimpervdirsys})\),
  we can consider the direct
  limit
  \[
    F_T \coloneqq \varinjlim_{\lambda \ge \lambda_0} 
    \prescript{\fp}{}{\cH}^{d_\lambda-d}
    \prescript{a}{}{\varphi}_{\lambda}^!(d_\lambda-d)\,
    F_{T_\lambda}.
  \]
  Then, we have
  \begin{equation}\label{eq:associnsimple}
    \Ass_{\cO_X}\bigl(T(\cO_X)\bigr) \subseteq
    \Set*{\eta \in X \given
      \begin{tabular}{@{}c@{}}
        \(\eta\) is the generic point of a locally closed\\
        connected regular subscheme \(j\colon Y \hookrightarrow X\) such that\\
        \(j_{!*}L[{\dim(Y)}]\)
        is a simple component of
        \(F_T\)\\
        for some lisse \(\QQ_\ell\)-sheaf \(L\) on \(Y\)
    \end{tabular}}.
  \end{equation}
\end{theorem}
Note that the category \(\Perv(X,\QQ_\ell)\) has all (small)
direct limits \cite[Proposition 7.1(2)]{Gab04}, and hence it makes sense to take
direct limits of perverse sheaves to define \(F_T\).\medskip
\par Theorem \ref{thm:assocaresimple} immediately implies Theorem
\ref{thm:lyubeznikfunctorsnew}.
\begin{customthm}{B}
  \label{thm:lyubeznikfunctors}
  Let \(R\) be an excellent regular \(\QQ\)-algebra of finite Krull dimension.
  Let \(T(\,\cdot\,)\) be a Lyubeznik functor on \(R\).
  Then, the module \(T(R)\) has finitely many associated prime ideals.
\end{customthm}
\begin{proof}
  Set \(X = \Spec(R)\).
  The quasi-coherence of \(T(\cO_X)\) \cite[Expos\'e II, Proposition 1]{SGA2}
  implies
  \[
    \Ass_R\bigl(T(R)\bigr) = \Ass_{\cO_X}\bigl(T(\cO_X)\bigr).
  \]
  The right-hand side
  of \eqref{eq:associnsimple} is finite by Theorem \ref{thm:perv}.
  We therefore see that
  Theorem \ref{thm:assocaresimple} implies Theorem \ref{thm:lyubeznikfunctors}.
\end{proof}
We now prove Theorem \ref{thm:assocaresimple}.
\begin{proof}[Proof of Theorem \ref{thm:assocaresimple}]
  Since associated points \cite[Chapter IV, \S1, no.\ 4, Proposition 5]{Bou72}
  and composition series for perverse sheaves (by \cite[Proposition
  2.2.1]{Mor25} and Theorem \ref{thm:fargues})
  are compatible with localization,
  we may work with one point \(x \in X\) at a
  time and replace \(X\) by the local scheme \(\Spec(\cO_{X,x})\), in which case
  it moreover suffices to show that if \(X\) is local with unique closed point
  \(x\) and
  \[
    x \in \Ass_{\cO_X}\bigl(T(\cO_X)\bigr),
  \]
  then
  \(x\) lies in the set on the right-hand side of
  \eqref{eq:associnsimple}.
  \par Set \(R = \cO_{X,x}\) with maximal ideal \(\fm = \fm_x\).
  We will denote the global sections of
  \(T(\cO_X)\) by \(T(R)\) since
  \(T(\cO_X)\)
  is quasi-coherent \cite[Expos\'e II, Proposition 1]{SGA2}.
  We replace
  \(R_\lambda\) by \((R_\lambda)_{\fm \cap R_\lambda}\) to assume that the
  \(R_\lambda\) are local with maximal ideals
  \[
    \fm_\lambda \coloneqq (\fm \cap R_\lambda) \cdot (R_\lambda)_{\fm \cap R_\lambda}.
  \]
  Note that the construction of \(F_T\) is compatible with this base change by
  regular base change \citeleft\citen{Fuj95}\citemid
  Corollary 7.1.6\citepunct \citen{ILO14}\citemid Expos\'e XIV, Lemme
  2.5.3\citeright\ and the \(t\)-exactness of \'etale base change (up to a
  shift) \cite[Proposition 2.2.1]{Mor25}.\smallskip
  \par Let \(x_1,x_2,\ldots,x_d \in R\) be a set of generators for \(\fm\).
  \begin{step}\label{step:xilambda0}
    Finding an element \(\xi_{\lambda_0} \in T_{\lambda_0}(R_{\lambda_0})\) 
    annihilated by \(x_1,x_2,\ldots,x_d\).
  \end{step}
  \par Suppose
  \(\xi \in T(R)\)
  is an element such that \(\Ann_R(\xi) = \fm\),
  which exists by our hypothesis that \(\fm \in \Ass_R(T(R))\).
  Note that \(\xi \ne 0\) since \(1 \notin \Ann_R(\xi)\).
  By the isomorphism \eqref{eq:toxdirlim} in Proposition
  \ref{prop:perversecohunderlimdxmod},
  after possibly replacing \(\lambda_0\) by a
  larger element in \(\Lambda\), \(\xi\) is the image of an element
  \[
    \xi_{\lambda_0} \in T_{\lambda_0}\bigl(R_{\lambda_0}\bigr).
  \]
  By the isomorphism \eqref{eq:toxdirlim} in Proposition
  \ref{prop:perversecohunderlimdxmod} again,
  after
  possibly replacing \(\lambda_0\), we may assume that \(x_1,x_2,\ldots,x_d \in
  R_{\lambda_0}\) and
  \[
    x_1 \cdot \xi_{\lambda_0}
    = x_2 \cdot \xi_{\lambda_0}
    = \cdots
    = x_d \cdot \xi_{\lambda_0}
    = 0.
  \]
  Setting \(\xi_\lambda \in T_\lambda(R_\lambda)\)
  to be the image of \(\xi_{\lambda_0}\) under the
  pullback maps \eqref{eq:pullbackmaprh}
  along \(\prescript{a}{}{\varphi}_{\lambda_0\lambda}\), we have
  \[
    \Ann_{R_\lambda}(\xi_{\lambda}) \cdot R =
    (x_1x_2,\ldots,x_d) \cdot R = \fm
  \]
  for all \(\lambda \ge \lambda_0\).
  Set \(I_\lambda \coloneqq (x_1,x_2,\ldots,x_d)R_\lambda\) and fix the notation
  \[
    \begin{tikzcd}[ampersand replacement=\&]
      \{\fm\} \rar[hook]{i_\fm} \dar[swap]{\prescript{a}{}{\bar{\varphi}}_\lambda}
      \& X \dar{\prescript{a}{}{\varphi}_\lambda}\\
      V(I_\lambda) \rar[hook]{i_{I_\lambda}} \& X_\lambda
    \end{tikzcd}
  \]
  where the square is Cartesian.
  We note that \(x_1,x_2,\ldots,x_d\) forms part of a regular system of
  parameters in \(R_\lambda\) since if their images were not linearly
  independent in \(\fm_\lambda/\fm_\lambda^2\), then their images in
  \(\fm/\fm^2\) would not be linearly independent either.
  Thus, \(R_\lambda/I_\lambda\) is regular for every \(\lambda \ge
  \lambda_0\).\smallskip
  \begin{step}\label{step:imagetodirlimnonzero}
    The map
    \[
      \prescript{\fp}{}{\cH}^{d_{\lambda_0}-d}
      \prescript{a}{}{\varphi}_{\lambda_0}^!(d_{\lambda_0}-d)\,
      \prescript{\fp}{}{\cH}^0
      i_{I_{\lambda_0}*}
      i_{I_{\lambda_0}}^!
      F_{T_{\lambda_0}} \longrightarrow
      \varinjlim_{\lambda \ge \lambda_0}
      \prescript{\fp}{}{\cH}^{d_\lambda-d}
      \prescript{a}{}{\varphi}_\lambda^!(d_\lambda-d)\,
      \prescript{\fp}{}{\cH}^0
      i_{I_\lambda*}
      i_{I_\lambda}^!
      F_{T_\lambda}
    \]
    is nonzero.
  \end{step}
  Suppose that the map is the \(0\) map.
  Since the category of perverse sheaves is Noetherian (Theorem \ref{thm:perv}),
  we see that there exists \(\mu_0 \ge \lambda_0\) such that
  \begin{align*}
    \prescript{\fp}{}{\cH}^{d_{\lambda_0}-d}
    \prescript{a}{}{\varphi}_{\lambda_0}^!(d_{\lambda_0}-d)\,
    \prescript{\fp}{}{\cH}^0
    i_{I_{\lambda_0}*}
    i_{I_{\lambda_0}}^!
    F_{T_{\lambda_0}} &\overset{0}{\longrightarrow}
    \prescript{\fp}{}{\cH}^{d_{\mu_0}-d}
    \prescript{a}{}{\varphi}_{\mu_0}^!(d_{\mu_0}-d)\,
    \prescript{\fp}{}{\cH}^0
    i_{I_{\mu_0}*}
    i_{I_{\mu_0}}^!
    F_{T_{\mu_0}}
  \intertext{is the \(0\) map.
  By the \(t\)-exactness of \(i_{I_\lambda*}\) and
  base change
  \citeleft\citen{SGA43}\citemid Expos\'e XVIII, (3.1.14)\((a)\)\citepunct
  \citen{Eke90}\citemid Theorem 6.3\((iii)\)\citeright, the map}
    i_{\fm*}
    \prescript{\fp}{}{\cH}^{d-d_{\lambda_0}}
    \prescript{a}{}{\bar{\varphi}}_{\lambda_0}^!(d_{\lambda_0}-d)\,
    \prescript{\fp}{}{\cH}^0
    i_{I_{\lambda_0}}^!
    F_{T_{\lambda_0}} &\overset{0}{\longrightarrow}
    i_{\fm*}
    \prescript{\fp}{}{\cH}^{d-d_{\mu_0}}
    \prescript{a}{}{\bar{\varphi}}_{\mu_0}^!(d_{\mu_0}-d)\,
    \prescript{\fp}{}{\cH}^0
    i_{I_{\mu_0}}^!
    F_{T_{\mu_0}}
  \intertext{is also the \(0\) map.
  Under the equivalence of categories between perverse sheaves
  on \(\{\fm\}\) and perverse sheaves on \(X\) supported on \(\{\fm\}\),
  analytification (Theorem \ref{thm:bbdg612}) after changing ground fields, and
  the Riemann--Hilbert correspondence (Theorem \ref{thm:riemannhilbert}),
  the map above corresponds to the map of the form}
    \cH^{d_\lambda-d}
    \bar{\varphi}_{\lambda_0}^\dagger\,
    \cH^0
    i_{I_{\lambda_0}}^\dagger
    T_{\lambda_0}(\cO_{X_{\lambda_0}})
    &\overset{0}{\longrightarrow}
    \cH^{d_{\mu_0}-d}
    \bar{\varphi}_{\mu_0}^\dagger\,
    \cH^0
    i_{I_{\mu_0}}^\dagger
    T_{\mu_0}(\cO_{X_{\mu_0}}),
  \intertext{where we omit the change of ground field from our notation.
  Since changing ground fields is faithfully flat, the map}
    \Hom_{R_{\lambda_0}}\bigl(R_{\lambda_0}/I_{\lambda_0},
    T_{\lambda_0}(R_{\lambda_0})\bigr) \otimes_{R_{\lambda_0}} R
    &\overset{0}{\longrightarrow}
    \Hom_{R_{\mu_0}}\bigl(R_{\mu_0}/I_{\mu_0},
    T_{\mu_0}(R_{\mu_0})\bigr) \otimes_{R_{\mu_0}} R
  \end{align*}
  of \(R\)-modules is the \(0\) map.
  Here, we use the fact that the rings \(R/I_\lambda\) are regular for every
  \(\lambda \ge \lambda_0\) (proved in Step \ref{step:xilambda0})
  and the description of restriction of
  \(\sD\)-modules in \citeleft\citen{Kas70}\citemid Theorem 2.3.4\citepunct
  \citen{BGKHME87}\citemid Chapter VI, Theorem 7.4\((ii)\)\citeright.
  This map fits into the commutative diagram
  \[
    \begin{tikzcd}
      \Hom_{R_{\lambda_0}}\bigl(R_{\lambda_0}/I_{\lambda_0},
    T_{\lambda_0}(R_{\lambda_0})\bigr) \otimes_{R_{\lambda_0}} R
      \rar{0} \dar[hook]
      & \prescript{a}{}{\bar{\varphi}}^*_{\mu_0}
      \Hom_{R_{\mu_0}}\bigl(R_{\mu_0}/I_{\mu_0},
      T_{\mu_0}(R_{\mu_0})\bigr) \otimes_{R_{\mu_0}} R \dar\\
      T_{\lambda_0}(R_{\lambda_0}) \otimes_{R_{\lambda_0}} R
      \rar
      & T(R)
    \end{tikzcd}
  \]
  where \(\xi_{\lambda_0} \otimes 1\) lies in
  the top left module by the choice of \(\lambda_0\) in Step
  \ref{step:xilambda0}.
  By the commutativity of the diagram, the image of
  \(\xi_{\lambda_0} \otimes 1\) in
  \(T(R)\) is \(0\), contradicting Step
  \ref{step:xilambda0}.\smallskip
  \setcounter{step}{2}
  \begin{step}\label{step:gtftmap}
    We have an isomorphism
    \[
      G_T \coloneqq \varinjlim_{\lambda \ge \lambda_0}
      \prescript{\fp}{}{\cH}^{d_\lambda-d}
      \prescript{a}{}{\varphi}_\lambda^!(d_\lambda-d)\,
      \prescript{\fp}{}{\cH}^0
      i_{I_\lambda*}
      i_{I_\lambda}^!\,
      F_{T_\lambda} \cong
      \prescript{\fp}{}{\cH}^0 i_{\fm*}i_{\fm}^!\,F_T.
    \]
  \end{step}
  We have the chain of isomorphisms
  \begin{align*}
    G_{T} &\cong \varinjlim_{\lambda \ge \lambda_0}
    \prescript{\fp}{}{\cH}^{d-d_\lambda}
    \prescript{a}{}{\varphi}_\lambda^*\,
    \prescript{\fp}{}{\cH}^0
    i_{I_\lambda*}
    i_{I_\lambda}^!\,
    F_{T_\lambda}\\
    &\cong \varinjlim_{\lambda \ge \lambda_0}
    \prescript{\fp}{}{\cH}^{0}
    \prescript{a}{}{\varphi}_\lambda^*\,
    \prescript{\fp}{}{\cH}^0
    i_{I_\lambda*}
    i_{I_\lambda}^!\,
    F_{T_\lambda}[d-d_\lambda]\\
    &\cong \prescript{\fp}{}{\cH}^{0}
    \varinjlim_{\lambda \ge \lambda_0}
    \prescript{a}{}{\varphi}_\lambda^*\,
    \prescript{\fp}{}{\cH}^0
    i_{I_\lambda*}
    i_{I_\lambda}^!\,
    F_{T_\lambda}[d-d_\lambda]\\
    &\cong \prescript{\fp}{}{\cH}^{0}
    i_{\fm*}
    i_{\fm}^!\,
    F_{T}.
  \end{align*}
  The first isomorphism holds by Theorem
  \ref{thm:perversecohunderlimperv}\((\ref{thm:perversecohunderlimpervpullbacks})\),
  and the second isomorphism holds by definition of perverse cohomology.
  The third isomorphism holds since
  perverse truncations commute with direct limits by the fact that direct
  systems of perverse sheaves lift to direct systems of actual complexes
  \cite[Proposition 7.1]{Gab04} and by the construction of perverse truncations
  using modified Godement resolutions in Remark \ref{rem:gabbertruncate}.
  Finally, the last isomorphism holds by Gabber's version of Grothendieck's
  limit theorem for local cohomology \cite[Proposition 5.2]{Gab04}.\smallskip
  \setcounter{step}{3}
  \begin{step}
    Conclusion of proof.
  \end{step}
  By Steps \ref{step:imagetodirlimnonzero} and \ref{step:gtftmap}, we see that
  the image of
  \[
    \prescript{\fp}{}{\cH}^{d_{\lambda_0}-d}
    \prescript{a}{}{\varphi}_{\lambda_0}^!(d_{\lambda_0}-d)\,
    \prescript{\fp}{}{\cH}^0
    i_{I_{\lambda_0}*}
    i_{I_{\lambda_0}}^!
    F_{T_{\lambda_0}}
    \longrightarrow
    i_{\fm*}i_{\fm}^! F_T
  \]
  is nonzero.
  Thus, there exists a simple component of \(F_T\) with support on \(\{\fm\}\).
\end{proof}

\section{Local cohomology modules of\texorpdfstring{\except{toc}{\\}{}}{}
excellent locally unramified regular rings}\label{sect:mixedchar}
In this section, we prove Theorem \ref{thm:mainnew}.
Since Theorem \ref{thm:lyubeznikfunctors} says that \(H^i_I(R) \otimes_\ZZ \QQ\)
has finitely many associated prime ideals, what remains is to show that
\[
  \bigcup_{p\ \text{prime}}
  \Ass_R\Bigl(\ker\bigl(H^i_I(R) \overset{p}{\longrightarrow}
  H^i_I(R)\bigr)\Bigr)
\]
is finite.
To do so, we will use Lyubeznik's theory of \(F\)-modules \cite{Lyu97}, the fact
that local cohomology modules are \(F\)-finite \(F\)-modules \cite[Example
2.2\((b)\)]{Lyu97}, and the fact that \(F\)-finite \(F\)-modules have finite
length in the category of \(F\)-modules.
This last finite length property is where the hypothesis on the quotients
\(R/pR\) is used, and is due to
Blickle and B\"ockle
\cite[Theorem 5.13]{BB11}.
See Theorem \ref{thm:fmodfinlen}, where we also state an earlier finite length
result for rings of
finite type over regular local rings of prime characteristic \(p > 0\)
due to Lyubeznik \cite[Theorem
3.2]{Lyu97}.
\subsection{\emph{F}-modules}
We recall the definition of \(F\)-modules due to Lyubeznik \cite{Lyu97}.
This notion relies on the following functor introduced by Peskine and Szpiro
\cite{PS73}.
\begin{citeddef}[{\cite[Chapitre I, D\'efinition 1.2]{PS73}}]
  Let \(R\) be a ring of prime characteristic \(p > 0\).
  Denote by
  \[
    \begin{tikzcd}[row sep=0,column sep=1.475em]
      F\colon &[-2.35em]  R \rar & F_*R\\
      & r \rar[mapsto] & r^p
    \end{tikzcd}
  \]
  the Frobenius map.
  The \textsl{Frobenius functor of Peskine--Szpiro} is the extension of scalars functor
  \[
    \begin{tikzcd}[row sep=0,column sep=1.475em]
      \mathsf{F}\colon
      &[-3.5em]
      \mathsf{Mod}_R \rar
      & \mathsf{Mod}_R\\
      & M \rar[mapsto] & M \otimes_R F_*R\\
      & \Bigl(M \overset{h}{\longrightarrow} N\Bigr) \rar[mapsto]
      & \Bigl(M \otimes_R F_*R \xrightarrow{\,h \otimes \id\,} N \otimes_R
      F_*R\Bigr)
    \end{tikzcd}
  \]
  along the Frobenius map \(F\), where we consider \(M \otimes_R F_*R\) as an
  \(R\)-module using the \(F_*R\)-module structure coming from the right factor
  \(F_*R\).
\end{citeddef}
The Peskine--Szpiro functor \(\mathsf{F}\) is exact when \(R\) is regular, since
in this case, the Frobenius map \(F\colon R \to F_*R\) is flat \cite[Theorem
2.1]{Kun69}.
\begin{citeddef}[{\cite[Definition 1.1]{Lyu97}}]
  Let \(R\) be a regular ring of prime characteristic \(p > 0\).
  An \textsl{\(F\)-module} over \(R\) is a pair \((\sM,\theta_\sM)\) where \(\sM\) is an
  \(R\)-module and
  \[
    \theta_\sM\colon \sM \overset{\sim}{\longrightarrow} \mathsf{F}(\sM)
  \]
  is a right \(R\)-module isomorphism, which we call the \textsl{structure morphism}
  of \(\sM\).
  A \textsl{morphism} \(f\colon (\sM,\theta_\sM) \to (\sN,\theta_\sN)\) of \(F\)-modules is
  an \(R\)-module homomorphism \(f\colon \sM \to \sN\) for which the diagram
  \[
    \begin{tikzcd}
      \sM \rar{f}\dar[swap]{\theta_\sM} & \sN\dar{\theta_\sN}\\
      \mathsf{F}(\sM) \rar{\mathsf{F}(f)}
      & \mathsf{F}(\sN)
    \end{tikzcd}
  \]
  commutes in \(\mathsf{Mod}_R\).
\end{citeddef}
The following class of \(F\)-modules satisfies good finiteness properties.
\begin{citeddef}[{\cite[Definitions 1.9 and 2.1]{Lyu97}}]
  Let \(R\) be a regular ring of prime characteristic \(p > 0\).
  Let \((\sM,\theta)\) be an \(F\)-module over \(R\).
  A \textsl{generating morphism} for \(\sM\) is an \(R\)-module map
  \[
    \beta\colon M \longrightarrow \mathsf{F}(M)
  \]
  for an \(R\)-module \(M\) such that \(\sM\) is the direct limit of the direct
  system in the top row of the commutative diagram
  \[
    \begin{tikzcd}[column sep=large]
      M \rar{f} \dar[swap]{\beta}
      & \mathsf{F}(M) \dar{\mathsf{F}(\beta)}\rar{\mathsf{F}(\beta)}
      & \mathsf{F}^2(M) \dar{\mathsf{F}^2(\beta)}\rar{\mathsf{F}^2(\beta)}
      & \cdots\\
      \mathsf{F}(M) \rar{\mathsf{F}(\beta)}
      & \mathsf{F}^2(M) \rar{\mathsf{F}^2(\beta)}
      & \mathsf{F}^3(M) \rar{\mathsf{F}^3(\beta)}
      & \cdots
    \end{tikzcd}
  \]
  and such that the structure isomorphism \(\theta\colon \sM \to
  \mathsf{F}(\sM)\) is the direct limit of the vertical maps in this diagram.
  Any morphism \(\beta\colon M \to \mathsf{F}(M)\) induces an \(F\)-module in
  this way, which we call the \(F\)-module \textsl{generated by} \(\beta\).
  We say that \(\sM\) is \textsl{\(F\)-finite} if \(\sM\) has a generating
  morphism \(\beta\colon M \to \mathsf{F}(M)\) where \(M\) is a finitely
  generated \(R\)-module.
\end{citeddef}
We now state the finite length property of \(F\)-finite \(F\)-modules that we need.
For the statement below, recall
that a ring \(R\) of prime characteristic \(p > 0\) is \textsl{\(F\)-finite} if
the Frobenius map \(F\colon R \to F_*R\) is module-finite \cite[p.\ 464]{Fed83}.
\begin{citedthm}[{\citeleft\citen{Lyu97}\citemid Theorem 3.2\citepunct
  \citen{BB11}\citemid Theorem 5.13\citeright}]\label{thm:fmodfinlen}
  Let \(R\) be a regular ring of prime characteristic \(p > 0\).
  Suppose one of the following assumptions hold.
  \begin{enumerate}[label=\((\roman*)\)]
    \item \(R\) is finitely generated over a regular local ring of prime
      characteristic \(p > 0\).
    \item \(R\) is \(F\)-finite.
  \end{enumerate}
  Then, every \(F\)-finite \(F\)-module over \(R\) has finite length in the
  category of \(F\)-modules over \(R\).
\end{citedthm}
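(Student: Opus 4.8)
Since the statement combines \cite[Theorem 3.2]{Lyu97} (case (i)) with \cite[Theorem 5.13]{BB11} (case (ii)), the plan is to prove both by transporting, through a suitable duality, the question from $F$-finite $F$-modules to finitely generated modules equipped with a Cartier action, where finiteness is more tractable. Preliminarily I would develop the theory of \emph{roots}: from a generating morphism $\beta\colon M\to\mathsf{F}(M)$ with $M$ finitely generated, the kernels of the iterated maps $M\to\mathsf{F}^{k}(M)$ ascend and hence stabilize because $R$ is Noetherian and $M$ is finitely generated, and quotienting by the stable kernel yields an injective generating morphism, a root. Using roots I would prove that sub-$F$-modules and quotient $F$-modules of an $F$-finite $F$-module are again $F$-finite---this is where the hypotheses (i) and (ii) first enter---so that finite length of $\sM$ in the category of all $F$-modules is the same as finite length in the category of $F$-finite $F$-modules. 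Since the latter category is anti-equivalent to a category of Cartier-type objects under the dualities below, it then suffices to show that every object of the dual category is both Noetherian and Artinian.

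For the duality, when $R$ is $F$-finite (case (ii)) a globalized form of Matlis duality makes the category of $F$-finite $F$-modules over $R$ anti-equivalent to the category of coherent Cartier crystals---pairs $(M,\kappa\colon F_{*}M\to M)$ with $M$ coherent, taken modulo the Serre subcategory of nilpotent Cartier modules---exchanging the structure isomorphism $\theta\colon\sM\xrightarrow{\sim}\mathsf{F}(\sM)$ with $\kappa$. When $R$ is finitely generated over a regular local ring of characteristic $p$ (case (i)), $R$ carries a dualizing complex and I would run the same argument by dualizing against it, landing again on finitely generated modules with a Cartier action. On the Cartier side the Noetherian property is the comparatively easy one: after choosing a coherent representative $M$ whose Cartier map has stabilized---such a representative exists by a standard stabilization lemma---so that $\bigcap_{e}\kappa^{e}(F^{e}_{*}M)=M$, sub-crystals are represented by $\kappa$-stable coherent $R$-submodules of $M$, and ascending chains of those stabilize because $R$ is Noetherian and $M$ is finitely generated.

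The Artinian property of coherent Cartier crystals is the step I expect to be the main obstacle, since it is exactly what forces one to understand the simple objects. I would prove it by Noetherian induction on the support inside $\Spec R$: localizing at and restricting to the generic point of a minimal-dimensional component of the support reduces the question to a Cartier crystal over a field $k$ of characteristic $p$---a finite-dimensional $k$-vector space with a bijective (unit) Cartier structure---for which a Lang-type descent argument shows that after a finite separable extension of $k$ the structure becomes a sum of copies of the unit object, so the crystal is determined by a finite $\mathbf{F}_{p}$-module and has finite length; equivalently this unit object corresponds to a lisse $\mathbf{F}_{p}$-sheaf of finite rank, which has finite length as a representation of the \'etale fundamental group. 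Peeling off this generic part leaves a crystal supported on a proper closed subset, to which the inductive hypothesis applies, and since $\Spec R$ is Noetherian of finite Krull dimension the induction terminates and the stratumwise length bounds assemble into a bound for $\sM$ after transporting back through the duality.
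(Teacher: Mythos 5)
The paper does not prove Theorem \ref{thm:fmodfinlen}; it is stated as a cited theorem, with case (i) attributed to Lyubeznik and case (ii) to Blickle--B\"ockle, so there is no internal argument to compare against. Your outline of case (ii) tracks the Blickle--B\"ockle strategy in broad strokes: pass to a root, dualize to coherent Cartier modules and their crystals using the dualizing complex that exists for $F$-finite Noetherian rings by Gabber's theorem, prove the ACC by stabilization of $\kappa$-stable coherent submodules, and prove the DCC by a support induction whose generic piece is handled by Lang/Frobenius descent to $\mathbf{F}_p$-local systems.

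The genuine gap is in your treatment of case (i). Without $F$-finiteness the Frobenius $F\colon\Spec R\to\Spec R$ is not a finite morphism, so the Grothendieck duality you invoke does not transport $F$-finite $F$-modules to coherent Cartier data: $F^!$ need not preserve coherence, $F_*$ of a coherent module is not coherent, and the anti-equivalence with Cartier crystals is not even defined in this generality. Case (i) is genuinely outside the scope of the crystal machinery (a regular local ring of characteristic $p$ can fail to be $F$-finite), which is exactly why the paper keeps it as a separate hypothesis citing Lyubeznik rather than Blickle--B\"ockle. Lyubeznik's argument reduces to the complete regular local case and uses Matlis duality against the injective hull of the residue field, together with a separate device for non-maximal supports; you would need to substitute something of that kind, not rerun the Cartier-crystal argument. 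A secondary caution even in case (ii): ``localize at the generic point and apply Lang descent'' is the right seed for the DCC, but the descent from the generic fibre to a global length bound is where the real content of \cite[Theorem 5.13]{BB11} lies, and as written your induction leaves that step as a plan rather than a proof.
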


\subsection{The proof of Theorem \ref{thm:mainnew}}
We are now ready to prove Theorem \ref{thm:mainnew} using Theorem
\ref{thm:lyubeznikfunctors} and a modification of
the strategy in \cite{BBLSZ14}.
\begin{customthm}{A}\label{thm:mainnewtext}
  Let \(R\) be an excellent regular ring of finite Krull dimension that is flat
  over a regular domain \(A\) of dimension \(\le 1\).
  Suppose for every nonzero prime ideal \(\fp \subseteq A\) such that \(A/\fp\)
  is of prime characteristic, the quotient ring
  \(R/\fp R\) is regular and one of the following assumptions hold:
  \begin{enumerate}[label=\((\roman*)\),ref=\roman*]
    \item\label{thm:mainnewtextffin} \(R/\fp R\) is \(F\)-finite.
    \item\label{thm:mainnewtexteft} \(A_\fp\) is excellent and
      \(R \otimes_A A_\fp\) is essentially of finite type over \(A_\fp\).
  \end{enumerate} 
  Let \(I \subseteq R\) be an ideal.
  Then, for every \(i \ge 0\), the local cohomology module \(H^i_I(R)\) has
  finitely many associated prime ideals.
\end{customthm}
\begin{proof}
  Working one connected component of \(\Spec(R)\) at a time, we may assume that
  \(R\) is a domain.
  Fix a set of generators \(f_1,f_2,\ldots,f_r\) for \(I\) and consider the
  Koszul cohomology module \(H^i(\underline{f};R)\).
  Since \(H^i(\underline{f};R)\) is finitely generated, we can write
  \[
    \Ass_R\bigl(H^i(\underline{f};R)\bigr) = \{\fq_1,\fq_2,\ldots,\fq_m\}
  \]
  as a finite set of prime ideals \cite[Chapter IV, \S1, no.\ 4, Corollary to
  Theorem 2]{Bou72}.\smallskip
  \par For every \(j\), let \(\fp_j = \fq_j \cap A\).
  We set
  \[
    R_\fp \coloneqq R \otimes_A A_\fp
  \]
  for every prime ideal \(\fp
  \subseteq A\).
  \begin{step}\label{step:koszulassoc}
    For every prime ideal
    \[
      \fp \in \Spec(A) - \bigl\{(0),\fp_1,\fp_2,\ldots,\fp_m\bigr\},
    \]
    the uniformizer of \(A_\fp\)
    is a nonzerodivisor on \(H^i_I(R) \otimes_A A_\fp\).
  \end{step}
  Let \(u\) be the uniformizer of \(A_\fp\).
  It suffices to show that \(u\) is a nonzerodivisor after localizing and
  completing \(R\) at every maximal ideal \(\fm \subseteq R\) containing \(\fp\).
  Since \(R_\fp/uR_\fp \cong R/\fp R\)
  is regular, we may choose \(u\) to be part of a regular system
  of parameters for \((R_\fm)^\land\), in which case \((R_\fm)^\land\) is a
  formal power series ring over a DVR \(V\) with uniformizer \(u\) by Cohen's
  structure theorem \cite[Theorem 15]{Coh46}.
  We then see that \(u\) is a nonzerodivisor on \(H^i_I((R_\fm)^\land)\) by
  \cite[Theorem 4.1(1)]{BBLSZ14}.\smallskip
  \par Since
  \begin{equation}\label{eq:basechangefraca}
    H^i_I(R) \otimes_A \Frac(A) \cong H^i_I\bigl(R \otimes_A \Frac(A)\bigr)
  \end{equation}
  has
  finitely many associated prime ideals by \cite[Corollary 2.3]{HS93} (if
  \(\Frac(A)\) is of characteristic \(p > 0\)) and Theorem \ref{thm:lyubeznikfunctors}
  (if \(\Frac(A)\) is of characteristic zero),
  it remains to show that
  \[
    \bigcup_{\fp \in \Spec(A) - \{(0)\}}
    \Ass_{R_\fp}
    \Bigl(\ker\bigl(H^i_I(R_\fp) \overset{u}{\longrightarrow}
    H^i_I(R_\fp)\bigr)\Bigr)
  \]
  is finite.
  Here, the isomorphism
  \eqref{eq:basechangefraca}
  holds by flat base change for local cohomology modules \cite[Theorem 4.3.2]{BS13}.
  By Step \ref{step:koszulassoc}, this set is equal to
  \[
    \bigcup_{\fp \in \{\fp_1,\fp_2,\ldots,\fp_m\} - \{0\}}  
    \Ass_{R_\fp}
    \Bigl(\ker\bigl(H^i_I(R_\fp) \overset{u}{\longrightarrow}
    H^i_I(R_\fp)\bigr)\Bigr)
  \]
  and hence we may work one prime \(\fp \in \{\fp_1,\fp_2,\ldots,\fp_m\} - \{0\}\) at a time.
  We may therefore replace \(A\) by \(A_\fp\) and assume that \((A,\fp)\) is a
  DVR with uniformizer \(u\).
  \par If \(A\) is of equal characteristic zero, then we are done by Theorem
  \ref{thm:lyubeznikfunctors}.
  It therefore suffices to consider the case when \(A/\fp A\) is of prime
  characteristic.
  \setcounter{step}{1}
  \begin{step}
    The set
    \[
      \Ass_{R}
      \Bigl(\ker\bigl(H^i_I(R) \overset{u}{\longrightarrow}
      H^i_I(R)\bigr)\Bigr)
    \]
    is finite.
  \end{step}
  \par We claim it suffices to show the case when \(R/\fp R\) is \(F\)-finite.
  We show that case \((\ref{thm:mainnewtexteft})\)
  can be reduced to case \((\ref{thm:mainnewtextffin})\).
  By the gamma construction of Hochster and Huneke \citeleft\citen{HH94}\citemid
  \S6\citepunct \citen{Has10}\citemid Lemma 3.23\citeright\ when \(A\) is of
  prime characteristic or by the mixed characteristic version of the gamma
  construction \cite[\S5.2]{HJ24} when \(A\) is of mixed characteristic \((0,p)\),
  there exists a
  faithfully flat extension
  \[
    A \longrightarrow \hat{A} \longrightarrow \hat{A}^\Gamma
  \]
  of \(A\) such
  that \(A/\fp\) is \(F\)-finite and \(R/\fp R \otimes_A \hat{A}^\Gamma\) is
  regular.
  Note that the assumption that \(A_\fp\) is excellent is used to ensure
  that \(A \to \hat{A}\) has geometrically regular fibers, and hence all base
  changes of \(A \to \hat{A}\) by maps essentially of finite type preserve
  regular loci \cite[Proposition 6.8.2 and 6.8.3]{EGAIV2}.
  Since associated prime ideals are compatible with faithfully flat base change
  \cite[Chapter IV, \S1, no.\ 4, Proposition 5]{Bou72}, we may replace \(R\) by
  \(R \otimes_A \hat{A}^\Gamma\)
  to assume that \(R/\fp R\) is
  \(F\)-finite.
  \par Consider the exact sequence
  \[
    0 \longrightarrow R \overset{u}{\longrightarrow} R
    \longrightarrow R/\fp R
    \longrightarrow 0.
  \]
  We then have the associated long exact sequence
  \begin{equation}\label{eq:bblsz14}
    \cdots \longrightarrow H^{i-1}_I(R) \longrightarrow
    H^{i-1}_I(R/\fp R) \overset{d}{\longrightarrow}
    H^i_I(R) \overset{u}{\longrightarrow}
    H^i_I(R) \longrightarrow \cdots.
  \end{equation}
  Then, \(H^{i-1}_I(R/\fp R)\) is an \(F\)-finite \(F\)-module by \cite[Example
  2.2\((b)\)]{Lyu97} and has finite length in the category of
  \(F\)-modules by Theorem \ref{thm:fmodfinlen}.
  Here, we use the assumption that \(R/\fp R\) is \(F\)-finite.
  We will show that
  \begin{equation}\label{eq:redmodpcont}
    \Ass_R\Bigl(\ker\bigl(H^i_I(R) \overset{u}{\longrightarrow}
    H^i_I(R)\bigr)\Bigr) \subseteq \Set*{\fq \subseteq R \given
      \begin{tabular}{@{}c@{}}
        \(\fq\) is an associated prime ideal\\
        of a simple \(F\)-module\\
        component
        of \(H^{i-1}_I(R/\fp R)\)
    \end{tabular}}.
  \end{equation}
  By the exactness of \eqref{eq:bblsz14}, we have the isomorphism
  \[
    \im(d) \cong \ker\bigl(H^i_I(R) \overset{u}{\longrightarrow}
    H^i_I(R)\bigr).
  \]
  Fix a composition series
  \begin{align}
    0 = \sM_0 \subsetneq \sM_1 \subsetneq \cdots &\subsetneq \sM_\ell =
    H^{i-1}_I(R/\fp R)\nonumber
  \intertext{for \(H^{i-1}_I(R/\fp R)\) as an \(F\)-module.
  By \cite[Theorem 2.12\((b)\)]{Lyu97}, each factor \(\sM_i/\sM_{i-1}\) has a
  unique associated prime ideal.
  Since Frobenius is compatible with localization and completion,
  we see that}
    0 = \sM_0 \otimes_R (R_\fq)^\wedge \subseteq \sM_1 \otimes_R (R_\fq)^\wedge
    \subseteq \cdots &\subseteq \sM_\ell \otimes_R (R_\fq)^\wedge \cong
    H^{i-1}_I\bigl((R_\fq)^\wedge/\fp(R_\fq)^\wedge\bigr)\label{eq:compserrphat}
  \intertext{is a filtration as \(F\)-modules.
  Since generating morphisms of \(F\)-modules on \(R\) pull back to
  generating morphisms of \(F\)-modules on \((R_\fq)^\wedge\), the module}
    H^{i-1}_I(R/\fp R) \otimes_R (R_\fq)^\wedge &\cong
    H^{i-1}_I\bigl((R_\fq)^\wedge/\fp(R_\fq)^\wedge\bigr)\nonumber
  \end{align}
  is an \(F\)-finite \(F\)-module over \((R_\fq)^\wedge/\fp(R_\fq)^\wedge\).
  We can therefore refine the
  filtration \eqref{eq:compserrphat} to become a composition series for
  \(H^{i-1}_I(R/\fp R) \otimes_R (R_\fq)^\wedge\).
  Moreover, if
  \[
    \bigl(\sM_i/\sM_{i-1}\bigr) \otimes_R (R_\fq)^\wedge
  \]
  is nonzero, its simple components each
  have a unique associated prime ideal by \cite[Theorem 2.12\((b)\)]{Lyu97}.
  The associated prime ideals of these simple components
  all contract to the unique associated prime ideal of \(\sM_i/\sM_{i-1}\) by
  the compatibility of associated prime ideals with flat base change
  \cite[Chapter IV, \S1, no.\ 4, Proposition 5]{Bou72}.
  \par Now consider \eqref{eq:bblsz14} after base change to
  \((R_\fq)^\wedge\).
  As in the proof of \cite[Theorem 4.1]{BBLSZ14}, we see that the map
  \[
    H^{i-1}_I\bigl((R_\fq)^\wedge/\fp(R_\fq)^\wedge\bigr) \xrightarrow{d \otimes_R
    (R_\fq)^\wedge}
    H^i_I\bigl((R_\fq)^\wedge\bigr)
  \]
  is a map of \(\sD_{(R_\fq)^\wedge/V}\)-modules where \(V\) is a coefficient ring for
  \((R_\fq)^\wedge\) in the sense of \cite[Definition on p.\ 84]{Coh46}.
  Since
  \begin{equation}\label{eq:redmodplc}
    H^{i-1}_I\bigl((R_\fq)^\wedge/\fp(R_\fq)^\wedge\bigr) \cong
    H^{i-1}_I(R/\fp R) \otimes_R (R_\fq)^\wedge
  \end{equation}
  is of finite length as a
  \(\sD_{(R_\fq)^\wedge/V}\)-module by \cite[Theorem 5.7]{Lyu97}, we see that
  \(\im(d) \otimes_R (R_\fq)^\wedge\) is also of finite length and that
  each simple \(\sD_{(R_\fq)^\wedge/V}\)-module component
  \(\im(d) \otimes_R (R_\fq)^\wedge\) is a simple
  \(\sD_{(R_\fq)^\wedge/V}\)-module component of
  \eqref{eq:redmodplc}.
  In particular, since \(\fq\cdot(R_\fq)^\wedge\)
  is an associated prime ideal of
  \(\im(d) \otimes_R (R_\fq)^\wedge\), we see that by
  \citeleft\citen{BBLSZ14}\citemid p.\ 516\citepunct
  \citen{NB14}\citemid Remark 2.3\citeright, there is a simple
  \(\sD_{(R_\fq)^\wedge/V}\)-module component
  \(\sN\) of \eqref{eq:redmodplc}
  whose unique maximal
  associated prime ideal is \(\fq\cdot(R_\fq)^\wedge\).
  By \cite[Theorem 5.6]{Lyu97}, this simple component \(\sN\) is a direct
  summand of a simple \(F\)-module component \(\bar{\sN}\) of
  \eqref{eq:redmodplc}
  and \(\bar{\sN}\) has 
  \(\fq\cdot(R_\fq)^\wedge\) as its unique associated
  prime ideal by \cite[Theorem 2.12\((b)\)]{Lyu97}.
  This simple component \(\bar{\sN}\) appears as a simple component of one of
  the factors
  \[
    \bigl(\sM_i/\sM_{i-1}\bigr) \otimes_R (R_\fq)^\wedge
  \]
  in \eqref{eq:compserrphat}.
  By the previous paragraph, we see that \(\fq\) is the unique associated prime
  ideal of \(\sM_i/\sM_{i-1}\).
  This completes the proof of \eqref{eq:redmodpcont} and therefore of
  Theorem \ref{thm:mainnewtext}.
\end{proof}

\bookmarksetup{startatroot}


\begin{thebibliography}{BGKHME87}

  \bibitem[BB11]{BB11}
    Manuel Blickle and Gebhard B\"ockle, \textit{Cartier modules: finiteness
    results}, J. Reine Angew. Math. \textbf{661} (2011), 85--123;
    \burlalt{https://doi.org/10.1515/crelle.2011.087}{doi:10.1515/crelle.2011.087};
    MR \href{https://mathscinet.ams.org/mathscinet-getitem?mr=2863904}{2863904}.

  \bibitem[BBDG18]{BBDG18}
    Alexander Be\u{\i}linson, Joseph Bernstein, Pierre Deligne, and Ofer Gabber,
    \textit{Faisceaux pervers}, Second edition,
    Analyse et topologie sur les espaces singuliers. I
    ({L}uminy, 1981), Ast\'erisque, vol. 100, Soc. Math. France, Paris, 2018,
    vi+180 pp.; 1982 edition available at
    \url{https://numdam.org/item/AST_1982__100__1_0};
    \burlalt{https://doi.org/10.24033/ast.1042}{doi:10.24033/ast.1042};
    MR \href{https://mathscinet.ams.org/mathscinet-getitem?mr=4870047}{4870047}.

  \bibitem[BBLSZ14]{BBLSZ14}
    Bhargav Bhatt, Manuel Blickle, Gennady Lyubeznik, Anurag K. Singh, and
    Wenliang Zhang, \textit{Local cohomology modules of a smooth
    \(\mathbb{Z}\)-algebra have finitely many associated primes}, Invent. Math.
    \textbf{197} (2014), no. 3, 509--519;
    \burlalt{https://doi.org/10.1007/s00222-013-0490-z}{doi:10.1007/s00222-013-0490-z};
    MR \href{https://mathscinet.ams.org/mathscinet-getitem?mr=3251828}{3251828}.

  \bibitem[BBLSZ]{BBLSZ}
    \bysame, \textit{Applications of perverse sheaves in commutative
    algebra}, J. Reine Angew. Math. Ahead of Print (2025), 58 pp.;
    \burlalt{https://doi.org/10.1515/crelle-2025-0028}{doi:10.1515/crelle-2025-0028}.

  \bibitem[Ber83]{Ber83}
    Joseph Bernstein, \textit{Algebraic theory of \(D\)-modules}, Unpublished
    notes, 1983;
    Available at \url{https://math.uchicago.edu/~drinfeld/langlands.html}.

  \bibitem[BGKHME87]{BGKHME87}
    Armand Borel, Pierre-Paul Grivel, Burchard Kaup, Andr\'{e} Haefliger,
    Bernard Malgrange, and Fritz Ehlers, \textit{Algebraic \(D\)-modules},
    Perspect. Math., vol. 2, Academic Press, Boston, MA, 1987;
    \burlalt{https://n2t.net/ark:/13960/s2w8xr7bmrp}{ark:/13960/s2w8xr7bmrp};
    MR \href{https://mathscinet.ams.org/mathscinet-getitem?mr=882000}{882000}.

  \bibitem[BH22]{BH22}
    Bhargav Bhatt and David Hansen, \textit{The six functors for
    {Z}ariski-constructible sheaves in rigid geometry}, Compos. Math. \textbf{158}
    (2022), no. 2, 437--482;
    \burlalt{https://doi.org/10.1112/s0010437x22007291}{doi:10.1112/s0010437x22007291};
    MR \href{https://mathscinet.ams.org/mathscinet-getitem?mr=4413751}{4413751}.

  \bibitem[BLF00]{BLF00}
    Markus P. Brodmann and A. Lashgari Faghani, \textit{A finiteness result for
    associated primes of local cohomology modules}, Proc. Amer. Math. Soc.
    \textbf{128} (2000), no. 10, 2851--2853;
    \burlalt{https://doi.org/10.1090/S0002-9939-00-05328-4}{doi:10.1090/S0002-9939-00-05328-4};
    MR \href{https://mathscinet.ams.org/mathscinet-getitem?mr=1664309}{1664309}.

  \bibitem[BN08]{BN08}
    Kamal Bahmanpour and Reza Naghipour, \textit{Associated primes of local cohomology
    modules and Matlis duality}, J. Algebra \textbf{320} (2008), no. 6, 2632--2641;
    \burlalt{https://doi.org/10.1016/j.jalgebra.2008.05.014}{doi:10.1016/j.jalgebra.2008.05.014};
    MR \href{https://mathscinet.ams.org/mathscinet-getitem?mr=2441778}{2441778}.

  \bibitem[Bou72]{Bou72}
    Nicolas Bourbaki, \textit{Elements of mathematics. {C}ommutative algebra},
    {T}ranslated from the {F}rench, Hermann, Paris; Addison-Wesley, Reading,
    MA, 1972;
    \burlalt{https://n2t.net/ark:/13960/t56f3ng94}{ark:/13960/t56f3ng94};
    MR \href{https://mathscinet.ams.org/mathscinet-getitem?mr=360549}{360549}.

  \bibitem[BRS00]{BRS00}
    Markus P. Brodmann, Christel Rotthaus, and Rodney Y. Sharp, \textit{On
    annihilators and associated primes of local cohomology modules}, J. Pure
    Appl. Algebra \textbf{153} (2000), no. 3, 197--227;
    \burlalt{https://doi.org/10.1016/S0022-4049(99)00104-8}{doi:10.1016/S0022-4049(99)00104-8};
    MR \href{https://mathscinet.ams.org/mathscinet-getitem?mr=1783166}{1783166}.

  \bibitem[Bry86]{Bry86}
    Jean-Luc Brylinski, \textit{Transformations canoniques, dualit\'e{}
    projective, th\'eorie de {L}efschetz, transformations de {F}ourier et sommes
    trigonom\'etriques}, G\'eom\'etrie et analyse microlocales, Ast\'erisque,
    vol. 140-141, Soc. Math. France, Paris, 1986, pp. 3--134;
    \url{https://www.numdam.org/item/AST_1986__140-141__3_0};
    MR \href{https://mathscinet.ams.org/mathscinet-getitem?mr=864073}{864073}.

  \bibitem[BS13]{BS13}
    Markus P. Brodmann and Rodney Y. Sharp, \textit{Local cohomology. An
    algebraic introduction with geometric applications}, Second edition,
    Cambridge Stud. Adv. Math., vol. 136, Cambridge Univ. Press, Cambridge,
    2013;
    \burlalt{https://doi.org/10.1017/CBO9781139044059}{doi:10.1017/CBO9781139044059};
    MR \href{https://mathscinet.ams.org/mathscinet-getitem?mr=3014449}{3014449}.

  \bibitem[BS15]{BS15}
    Bhargav Bhatt and Peter Scholze, \textit{The pro-étale topology for
    schemes}, De la g\'eom\'etrie alg\'ebrique aux formes automorphes, I,
    Ast\'erisque, vol. 369, Soc. Math. France, Paris, 2015, pp. 99--201;
    \burlalt{https://doi.org/10.24033/ast.960}{doi:10.24033/ast.960};
    MR \href{https://mathscinet.ams.org/mathscinet-getitem?mr=3379634}{3379634}.

  \bibitem[Coh46]{Coh46}
    I. S. Cohen, \textit{On the structure and ideal theory of complete local
    rings}, Trans. Amer. Math. Soc. \textbf{59} (1946), 54--106;
    \burlalt{https://doi.org/10.2307/1990313}{doi:10.2307/1990313};
    MR \href{https://mathscinet.ams.org/mathscinet-getitem?mr=16094}{16094}.

  \bibitem[CRS24]{CRS}
    Yairon Cid-Ruiz and Ilya Smirnov, \textit{Effective generic freeness and
    applications to local cohomology}, J. Lond. Math. Soc. (2) \textbf{110}
    (2024), no. 4, Paper No. e12995, 31 pp.;
    \burlalt{https://doi.org/10.1112/jlms.12995}{doi:10.1112/jlms.12995};
    MR \href{https://mathscinet.ams.org/mathscinet-getitem?mr=4801897}{4801897}.

  \bibitem[dCM10]{dCM10}
    Mark Andrea A. de Cataldo and Luca Migliorini, \textit{The perverse
    filtration and the Lefschetz hyperplane theorem}, Ann. of Math. (2)
    \textbf{171} (2010), no. 3, 2089--2113;
    \burlalt{https://doi.org/10.4007/annals.2010.171.2089}{doi:10.4007/annals.2010.171.2089};
    MR \href{https://mathscinet.ams.org/mathscinet-getitem?mr=2680404}{2680404}.

  \bibitem[Del94]{Del94}
    Pierre Deligne, \textit{D\'ecompositions dans la cat\'egorie d\'eriv\'ee},
    Motives (Seattle, WA, 1991), Proc. Sympos. Pure Math., vol. 55, pt. 1,
    Amer. Math. Soc., Providence, RI, 1994, pp. 115--128;
    \burlalt{https://doi.org/10.1090/pspum/055.1/1265526}{doi:10.1090/pspum/055.1/1265526};
    MR \href{https://mathscinet.ams.org/mathscinet-getitem?mr=1265526}{1265526}.

  \bibitem[DQ18]{DQ18}
    Hailong Dao and Ph\d am H\`ung Qu\'y, \textit{On the associated primes of local
    cohomology}, Nagoya Math. J. \textbf{237} (2020), 1--9;
    \burlalt{https://doi.org/10.1017/nmj.2017.44}{doi:10.1017/nmj.2017.44};
    MR \href{https://mathscinet.ams.org/mathscinet-getitem?mr=4059782}{4059782}.

  \bibitem[EGAIV\textsubscript{1}]{EGAIV1}
    Alexander Grothendieck and Jean A. Dieudonn\'{e}, \textit{\'El\'ements de
    g\'eom\'etrie alg\'ebrique. IV. \'Etude locale des sch\'emas et des
    morphismes de sch\'emas. I}, Inst. Hautes \'Etudes Sci. Publ. Math.
    \textbf{20} (1964), 259 pp.;
    Available at \url{https://www.numdam.org/item/PMIHES_1964__20__5_0};
    \burlalt{https://doi.org/10.1007/BF02684747}{doi:10.1007/BF02684747};
    MR \href{https://mathscinet.ams.org/mathscinet-getitem?mr=173675}{173675}.

  \bibitem[EGAIV\textsubscript{2}]{EGAIV2}
    \bysame, \textit{\'{E}l\'{e}ments de g\'{e}om\'{e}trie alg\'{e}brique. {IV}.
    \'{E}tude locale des sch\'{e}mas et des morphismes de sch\'{e}mas. {II}},
    Inst. Hautes \'{E}tudes Sci. Publ. Math. \textbf{24} (1965), 231 pp.;
    Available at \url{https://www.numdam.org/item/PMIHES_1965__24__5_0};
    \burlalt{https://doi.org/10.1007/BF02684322}{doi:10.1007/BF02684322};
    MR \href{https://mathscinet.ams.org/mathscinet-getitem?mr=199181}{199181}.

  \bibitem[EGAIV\textsubscript{3}]{EGAIV3}
    \bysame, \textit{\'{E}l\'{e}ments de g\'{e}om\'{e}trie alg\'{e}brique. {IV}.
    \'{E}tude locale des sch\'{e}mas et des morphismes de sch\'{e}mas. {III}},
    Inst. Hautes \'{E}tudes Sci. Publ. Math. \textbf{28} (1966), 255 pp.;
    Available at \url{https://www.numdam.org/item/PMIHES_1966__28__5_0};
    \burlalt{https://doi.org/10.1007/BF02684343}{doi:10.1007/BF02684343};
    MR \href{https://mathscinet.ams.org/mathscinet-getitem?mr=217086}{217086}.

  \bibitem[EGAIV\textsubscript{4}]{EGAIV4}
    \bysame, \textit{\'{E}l\'{e}ments de g\'{e}om\'{e}trie alg\'{e}brique. {IV}.
    \'{E}tude locale des sch\'{e}mas et des morphismes de sch\'{e}mas. {IV}},
    Inst. Hautes \'{E}tudes Sci. Publ. Math. \textbf{32} (1967), 361 pp.;
    Available at \url{https://www.numdam.org/item/PMIHES_1967__32__5_0};
    \burlalt{https://doi.org/10.1007/BF02732123}{doi:10.1007/BF02732123};
    MR \href{https://mathscinet.ams.org/mathscinet-getitem?mr=238860}{238860}.

  \bibitem[Eke90]{Eke90}
    Torsten Ekedahl, \textit{On the adic formalism}, The Grothendieck
    Festschrift, Vol. II, Progr. Math., vol. 87, Birkh\"auser Boston,
    Boston, MA, 1990, pp. 197--218;
    \burlalt{https://doi.org/10.1007/978-0-8176-4575-5_4}{doi:10.1007/978-0-8176-4575-5_4};
    MR \href{https://mathscinet.ams.org/mathscinet-getitem?mr=1106899}{1106899}.

  \bibitem[Fal81]{Fal81}
    Gerd Faltings, \textit{Der {E}ndlichkeitssatz in der lokalen {K}ohomologie},
    Math. Ann. \textbf{255} (1981), no. 1, 45--56;
    \burlalt{https://doi.org/10.1007/BF01450555}{doi:10.1007/BF01450555};
    MR \href{https://mathscinet.ams.org/mathscinet-getitem?mr=611272}{611272}.

  \bibitem[Far09]{Far09}
    Laurent Fargues, \textit{Filtration de monodromie et cycles évanescents
    formels}, Invent. Math. \textbf{177} (2009), no. 2, 281--305;
    \burlalt{https://doi.org/10.1007/s00222-009-0184-8}{doi:10.1007/s00222-009-0184-8};
    MR \href{https://mathscinet.ams.org/mathscinet-getitem?mr=2511743}{2511743}.

  \bibitem[Fed83]{Fed83}
    Richard Fedder, \textit{\(F\)-purity and rational singularity}, Trans. Amer.
    Math. Soc. \textbf{278} (1983), no. 2, 461--480;
    \burlalt{https://doi.org/10.2307/1999165}{doi:10.2307/1999165};
    MR \href{https://mathscinet.ams.org/mathscinet-getitem?mr=701505}{701505}.

  \bibitem[FK88]{FK88}
    Eberhard Freitag and Reinhardt Kiehl, \textit{\'Etale cohomology and the
    Weil conjecture}, Translated from the German by Betty S. Waterhouse and
    William C. Waterhouse. With an historical introduction by Jean A.
    Dieudonn\'e, Ergeb. Math. Grenzgeb. (3), vol. 13, Springer-Verlag, Berlin,
    1988;
    \burlalt{https://doi.org/10.1007/978-3-662-02541-3}{doi:10.1007/978-3-662-02541-3};
    MR \href{https://mathscinet.ams.org/mathscinet-getitem?mr=926276}{926276}.

  \bibitem[FK18]{FK18}
    Kazuhiro Fujiwara and Fumiharu Kato, \textit{Foundations of rigid geometry.
    I}, EMS Monogr. Math., Eur. Math. Soc., Z\"urich, 2018;
    \burlalt{https://doi.org/10.4171/135}{doi:10.4171/135};
    MR \href{https://mathscinet.ams.org/mathscinet-getitem?mr=3752648}{3752648}.

  \bibitem[Fuj95]{Fuj95}
    Kazuhiro Fujiwara, \textit{Theory of tubular neighborhood in \'{e}tale
    topology}, Duke Math. J. \textbf{80} (1995), no. 1, 15--57;
    \burlalt{https://doi.org/10.1215/S0012-7094-95-08002-8}{doi:10.1215/S0012-7094-95-08002-8};
    MR \href{https://mathscinet.ams.org/mathscinet-getitem?mr=1360610}{1360610}.

  \bibitem[Gab04]{Gab04}
    Ofer Gabber, \textit{Notes on some \(t\)-structures}, Geometric aspects of
    Dwork theory, Vol. II, De Gruyter, Berlin, 2004,
    pp. 711--734;
    \burlalt{https://doi.org/10.1515/9783110198133.2.711}{doi:10.1515/9783110198133.2.711};
    MR \href{https://mathscinet.ams.org/mathscinet-getitem?mr=2099084}{2099084}.

  \bibitem[Har70]{Har70}
    Robin Hartshorne, \textit{Affine duality and cofiniteness}, Invent. Math.
    \textbf{9} (1970), no. 2, 145--164;
    \burlalt{https://doi.org/10.1007/BF01404554}{doi:10.1007/BF01404554};
    MR \href{https://mathscinet.ams.org/mathscinet-getitem?mr=257096}{257096}.

  \bibitem[Has10]{Has10}
    Mitsuyasu Hashimoto, \textit{\(F\)-pure homomorphisms, strong
    \(F\)-regularity, and \(F\)-injectivity}, Comm. Algebra \textbf{38} (2010),
    no. 12, 4569--4596;
    \burlalt{https://doi.org/10.1080/00927870903431241}{doi:10.1080/00927870903431241};
    MR \href{https://mathscinet.ams.org/mathscinet-getitem?mr=2764840}{2764840}.

  \bibitem[Hei17]{Hei17}
    Katharina Heinrich, \textit{Some remarks on biequidimensionality of
    topological spaces and Noetherian schemes}, J. Commut. Algebra \textbf{9}
    (2017), no. 1, 49--63;
    \burlalt{https://doi.org/10.1216/JCA-2017-9-1-49}{doi:10.1216/JCA-2017-9-1-49};
    MR \href{https://mathscinet.ams.org/mathscinet-getitem?mr=3631826}{3631826}.
    
  \bibitem[Hel01]{Hel01}
    Michael Hellus, \textit{On the set of associated primes of a local
    cohomology module}, J. Algebra \textbf{237} (2001), no. 1, 406--419;
    \burlalt{https://doi.org/10.1006/jabr.2000.8580}{doi:10.1006/jabr.2000.8580};
    MR \href{https://mathscinet.ams.org/mathscinet-getitem?mr=1813886}{1813886}.

  \bibitem[HH94]{HH94}
    Melvin Hochster and Craig Huneke, \textit{\(F\)-regularity, test elements, and smooth base
    change}, Trans. Amer. Math. Soc. \textbf{346} (1994), no. 1, 1--62;
    \burlalt{https://doi.org/10.2307/2154942}{doi:10.2307/2154942};
    MR \href{https://mathscinet.ams.org/mathscinet-getitem?mr=1273534}{1273534}.

  \bibitem[HJ24]{HJ24}
    Melvin Hochster and Jack Jeffries, \textit{A Jacobian criterion for
    nonsingularity in mixed characteristic}, Amer. J. Math. \textbf{146} (2024),
    no. 6, 1749--1780;
    \burlalt{https://doi.org/10.1353/ajm.2024.a944363}{doi:10.1353/ajm.2024.a944363};
    MR \href{https://mathscinet.ams.org/mathscinet-getitem?mr=4855866}{4855866}.

  \bibitem[HK91]{HK91}
    Craig Huneke and Jee Koh, \textit{Cofiniteness and vanishing of local cohomology
    modules}, Math. Proc. Cambridge Philos. Soc. \textbf{110} (1991), no. 3,
    421--429;
    \burlalt{https://doi.org/10.1017/S0305004100070493}{doi:10.1017/S0305004100070493};
    MR \href{https://mathscinet.ams.org/mathscinet-getitem?mr=1120477}{1120477}.

  \bibitem[HNBPW19]{HNBPW19}
    Daniel J. Hern\'andez, Luis N\'u\~nez-Betancourt, Felipe P\'erez, and Emily
    E. Witt, \textit{Lyubeznik numbers and injective dimension in mixed
    characteristic}, Trans. Amer. Math. Soc. \textbf{371} (2019), no. 11, 7533--7557;
    \burlalt{https://doi.org/10.1090/tran/7310}{doi:10.1090/tran/7310};
    MR \href{https://mathscinet.ams.org/mathscinet-getitem?mr=3955527}{3955527}.

  \bibitem[Hoc19]{Hoc19}
    Melvin Hochster, \textit{Finiteness properties and numerical behavior
    of local cohomology}, Comm. Algebra \textbf{47} (2019), no. 6, 1--11;
    \burlalt{https://doi.org/10.1080/00927872.2019.1574807}{doi:10.1080/00927872.2019.1574807};
    MR \href{https://mathscinet.ams.org/mathscinet-getitem?mr=3941632}{3941632}.

  \bibitem[HS93]{HS93}
    Craig L. Huneke and Rodney Y. Sharp, \textit{Bass numbers of local
    cohomology modules}, Trans. Amer. Math. Soc. \textbf{339} (1993), no. 2, 765--779;
    \burlalt{https://doi.org/10.2307/2154297}{doi:10.2307/2154297};
    MR \href{https://mathscinet.ams.org/mathscinet-getitem?mr=1124167}{1124167}.

  \bibitem[HTT08]{HTT08}
    Ryoshi Hotta, Kiyoshi Takeuchi, and Toshiyuki Tanisaki,
    \textit{\(D\)-modules, perverse sheaves, and representation theory},
    Translated from the 1995 Japanese edition by Takeuchi, Progr. Math., vol.
    236, Birkh\"auser Boston, Inc., Boston, MA, 2008;
    \burlalt{https://doi.org/10.1007/978-0-8176-4523-6}{doi:10.1007/978-0-8176-4523-6};
    MR \href{https://mathscinet.ams.org/mathscinet-getitem?mr=2357361}{2357361}.

  \bibitem[Hun92]{Hun92}
    Craig Huneke, \textit{Problems on local cohomology}, Free resolutions in
    commutative algebra and algebraic geometry (Sundance, UT, 1990), Res. Notes
    Math., vol. 2, Jones and Bartlett, Boston, MA, 1992, pp. 93--108;
    \burlalt{https://doi.org/10.1201/9781003420187}{doi:10.1201/9781003420187};
    MR \href{https://mathscinet.ams.org/mathscinet-getitem?mr=1165320}{1165320}.

  \bibitem[ILO14]{ILO14}
    Luc Illusie, Yves Laszlo, and Fabrice Orgogozo, eds.,
    With the collaboration of Fr\'ed\'eric D\'eglise, Alban
    Moreau, Vincent Pilloni, Michel Raynaud, Jo\"el Riou, Beno\^it
    Stroh, Michael Temkin, and Weizhe Zheng, \textit{Travaux de
    {G}abber sur l'uniformisation locale et la cohomologie \'etale des
    sch\'emas quasi-excellents}, S\'eminaire \`a{} l'\'Ecole Polytechnique 2006--2008,
    Ast\'erisque, vol. 363-364, Soc. Math. France, Paris, 2014, xxiv+625 pp.;
    Corrected version of Nov. 14, 2016 available at
    \url{http://fabrice.orgogozo.perso.math.cnrs.fr/travaux_de_Gabber};
    \burlalt{https://doi.org/10.24033/ast.935}{doi:10.24033/ast.935};
    MR \href{https://mathscinet.ams.org/mathscinet-getitem?mr=3309086}{3309086}.

  \bibitem[Jut09]{Jut09}
    Daniel Juteau, \textit{Decomposition numbers for perverse sheaves}, Ann.
    Inst. Fourier (Grenoble) \textbf{59} (2009), no. 3, 1177--1229;
    \burlalt{https://doi.org/10.5802/aif.2461}{doi:10.5802/aif.2461};
    MR \href{https://mathscinet.ams.org/mathscinet-getitem?mr=2543666}{2543666}.

  \bibitem[Kas70]{Kas70}
    Masaki Kashiwara, \textit{Algebraic study of systems of partial differential
    equations}, Master's thesis, University of Tokyo, 1970, Translated by
    Jean-Pierre Schneiders and Andrea D'Agnolo, M\'em. Soc. Math. France (N.S.)
    \textbf{63} (1995), xiv+72 pp.;
    \burlalt{https://doi.org/10.24033/msmf.377}{doi:10.24033/msmf.377};
    MR \href{https://mathscinet.ams.org/mathscinet-getitem?mr=1384226}{1384226}.

  \bibitem[Kas75]{Kas75}
    \bysame, \textit{On the maximally overdetermined system of linear
    differential equations. I}, Publ. Res. Inst. Math. Sci. \textbf{10} (1975),
    563--579;
    \burlalt{https://doi.org/10.2977/prims/1195192011}{doi:10.2977/prims/1195192011};
    MR \href{https://mathscinet.ams.org/mathscinet-getitem?mr=370665}{370665}.

  \bibitem[Kas78]{Kas78}
    \bysame, \textit{On the holonomic systems of linear differential equations.
    II}, Invent. Math. \textbf{49} (1978), no. 2, 121--135;
    \burlalt{https://doi.org/10.1007/BF01403082}{doi:10.1007/BF01403082};
    MR \href{https://mathscinet.ams.org/mathscinet-getitem?mr=511186}{511186}.

  \bibitem[Kas80]{Kas80}
    \bysame, \textit{Faisceaux constructibles et syst\`emes holon\^omes
    d'\'equations aux d\'eriv\'ees partielles lin\'eaires \`a{} points
    singuliers r\'eguliers}, S\'eminaire {G}oulaouic-{S}chwartz, 1979--1980,
    \'Ecole Polytech., Palaiseau, 1980, Exp. No. 19, 7 pp.;
    \url{https://www.numdam.org/item/SEDP_1979-1980____A20_0};
    MR \href{https://mathscinet.ams.org/mathscinet-getitem?mr=600704}{600704}.

  \bibitem[Kas84]{Kas84}
    \bysame, \textit{The {R}iemann-{H}ilbert problem for holonomic systems}, Publ.
    Res. Inst. Math. Sci. \textbf{20} (1984), no. 2, 319--365;
    \burlalt{https://doi.org/10.2977/prims/1195181610}{doi:10.2977/prims/1195181610};
    MR \href{https://mathscinet.ams.org/mathscinet-getitem?mr=743382}{743382}.

  \bibitem[Kat02]{Kat02}
    Mordechai Katzman, \textit{An example of an infinite set of associated primes of a local
    cohomology module}, J. Algebra \textbf{252} (2002), no. 1, 161--166;
    \burlalt{https://doi.org/10.1016/S0021-8693(02)00032-7}{doi:10.1016/S0021-8693(02)00032-7};
    MR \href{https://mathscinet.ams.org/mathscinet-getitem?mr=1922391}{1922391}.

  \bibitem[KK81]{KK81}
    Masaki Kashiwara and Takahiro Kawai, \textit{On holonomic systems of
    microdifferential equations. III. Systems with regular singularities}, Publ.
    Res. Inst. Math. Sci. \textbf{17} (1981), no. 3, 813--979;
    \burlalt{https://doi.org/10.2977/prims/1195184396}{doi:10.2977/prims/1195184396};
    MR \href{https://mathscinet.ams.org/mathscinet-getitem?mr=650216}{650216}.

  \bibitem[KS99]{KS99}
    Kazem Khashyarmanesh and Shokrollah Salarian, \textit{On the associated primes of local
    cohomology modules}, Comm. Algebra \textbf{27} (1999), no. 12, 6191--6198;
    \burlalt{https://doi.org/10.1080/00927879908826816}{doi:10.1080/00927879908826816};
    MR \href{https://mathscinet.ams.org/mathscinet-getitem?mr=1726302}{1726302}.

  \bibitem[Kun69]{Kun69}
    Ernst Kunz, \textit{Characterizations of regular local rings of
    characteristic \(p\)}, Amer. J. Math. \textbf{91} (1969), 772--784;
    \burlalt{https://doi.org/10.2307/2373351}{doi:10.2307/2373351};
    MR \href{https://mathscinet.ams.org/mathscinet-getitem?mr=252389}{252389}.

  \bibitem[Lyu93]{Lyu93}
    Gennady Lyubeznik, \textit{Finiteness properties of local cohomology modules (an
    application of \(D\)-modules to commutative algebra)}, Invent. Math.
    \textbf{113} (1993), no. 1, 41--55;
    \burlalt{https://doi.org/10.1007/BF01244301}{doi:10.1007/BF01244301};
    MR \href{https://mathscinet.ams.org/mathscinet-getitem?mr=1223223}{1223223}.

  \bibitem[Lyu97]{Lyu97}
    \bysame, \textit{\(F\)-modules: applications to local cohomology and
    \(D\)-modules in characteristic \(p>0\)}, J. Reine Angew. Math. \textbf{491}
    (1997), 65--130;
    \burlalt{https://doi.org/10.1515/crll.1997.491.65}{doi:10.1515/crll.1997.491.65};
    MR \href{https://mathscinet.ams.org/mathscinet-getitem?mr=1476089}{1476089}.

  \bibitem[Lyu00]{Lyu00}
    \bysame, \textit{Finiteness properties of local cohomology modules for
    regular local rings of mixed characteristic: the unramified case},
    Special issue in honor of Robin Hartshorne,
    Comm. Algebra \textbf{28} (2000), no. 12, 5867--5882;
    \burlalt{https://doi.org/10.1080/00927870008827193}{doi:10.1080/00927870008827193};
    MR \href{https://mathscinet.ams.org/mathscinet-getitem?mr=1808608}{1808608}.

  \bibitem[Lyu02]{Lyu02}
    \bysame, \textit{A partial survey of local cohomology}, Local cohomology and
    its applications (Guanajuato, 1999), Lecture Notes in Pure and Appl.
    Math., vol. 226, Dekker, New York, 2002, pp. 121--154;
    \burlalt{https://doi.org/10.1201/9781482275766}{doi:10.1201/9781482275766};
    MR \href{https://mathscinet.ams.org/mathscinet-getitem?mr=1888197}{1888197}.

  \bibitem[Mar01]{Mar01}
    Thomas Marley, \textit{The associated primes of local cohomology modules over rings of
    small dimension}, Manuscripta Math. \textbf{104} (2001), no. 4, 519--525;
    \burlalt{https://doi.org/10.1007/s002290170024}{doi:10.1007/s002290170024};
    MR \href{https://mathscinet.ams.org/mathscinet-getitem?mr=1836111}{1836111}.

  \bibitem[Mat80]{Mat80}
    Hideyuki Matsumura, \textit{Commutative algebra}, Second edition, Math.
    Lecture Note Ser., vol. 56, Benjamin/Cummings Publishing Co., Inc., Reading,
    MA, 1980; 1970 edition available at
    \burlalt{https://n2t.net/ark:/13960/s2gv47f226m}{ark:/13960/s2gv47f226m};
    MR \href{https://mathscinet.ams.org/mathscinet-getitem?mr=575344}{575344}.

  \bibitem[Meb77]{Meb77}
    Zoghman Mebkhout, \textit{Local cohomology of analytic spaces},
    \textit{Publ. Res. Inst. Math. Sci.} \textbf{12} (1977), suppl.,
    247--256;
    \burlalt{https://doi.org/10.2977/prims/1195196607}{doi:10.2977/prims/1195196607};
    MR \href{https://mathscinet.ams.org/mathscinet-getitem?mr=454056}{454056}.

  \bibitem[Meb80]{Meb80}
    \bysame, \textit{Sur le probl\`eme de {H}ilbert-{R}iemann},
    Complex analysis, microlocal calculus and relativistic quantum theory
    ({P}roc. {I}nternat. {C}olloq., {C}entre {P}hys., {L}es {H}ouches, 1979),
    Lecture Notes in Phys., vol. 126, Springer, Berlin-New York, pp. 90--110;
    \burlalt{https://doi.org/10.1007/3-540-09996-4_31}{doi:10.1007/3-540-09996-4_31};
    MR \href{https://mathscinet.ams.org/mathscinet-getitem?mr=579742}{579742}.

  \bibitem[Meb82]{Meb82}
    \bysame, \textit{Th\'eor\`emes de bidualit\'e{} locale pour les
    \(\mathcal{D}_X\)-modules holonomes}, Ark. Mat. \textbf{20} (1982), no. 1,
    111--124;
    \burlalt{https://doi.org/10.1007/BF02390502}{doi:10.1007/BF02390502};
    MR \href{https://mathscinet.ams.org/mathscinet-getitem?mr=660129}{660129}.

  \bibitem[Meb84a]{Meb84a}
    \bysame, \textit{Une \'equivalence de cat\'egories}, Compositio
    Math. \textbf{51} (1984), no. 1, 51--62;
    \url{https://www.numdam.org/item/CM_1984__51_1_51_0};
    MR \href{https://mathscinet.ams.org/mathscinet-getitem?mr=734784}{734784}.

  \bibitem[Meb84b]{Meb84b}
    \bysame, \textit{Une autre \'equivalence de cat\'egories}, Compositio
    Math. \textbf{51} (1984), no. 1, 63--88;
    \url{https://www.numdam.org/item/CM_1984__51_1_63_0};
    MR \href{https://mathscinet.ams.org/mathscinet-getitem?mr=734785}{734785}.

  \bibitem[Meb89]{Meb89}
    \bysame, \textit{Le formalisme des six op\'erations de {G}rothendieck pour les
    {$\mathscr{D}_X$}-modules coh\'erents}, With supplementary material by the
    author and Luis Narv\'aez Macarro, Travaux en Cours, vol. 35, Hermann,
    Paris, 1989;
    MR \href{https://mathscinet.ams.org/mathscinet-getitem?mr=1008245}{1008245}.

  \bibitem[MNM91]{MNM91}
    Zoghman Mebkhout and Luis Narv\'aez-Macarro, \textit{La th\'eorie du
    polyn\^ome de {B}ernstein-{S}ato pour les alg\`ebres de {T}ate et de
    {D}work-{M}onsky-{W}ashnitzer}, Ann. Sci. \'Ecole Norm. Sup. (4) \textbf{24}
    (1991), no. 2, 227--256;
    \burlalt{https://doi.org/10.24033/asens.1627}{doi:10.24033/asens.1627};
    MR \href{https://mathscinet.ams.org/mathscinet-getitem?mr=1097693}{1097693}.

  \bibitem[Mor25]{Mor25}
    Sophie Morel, \textit{Mixed \(\ell\)-adic complexes for schemes over number
    fields}, Doc. Math. \textbf{30} (2025), no. 1, 105--181;
    \burlalt{https://doi.org/10.4171/DM/990}{doi:10.4171/DM/990};
    MR \href{https://mathscinet.ams.org/mathscinet-getitem?mr=4855495}{4855495}.

  \bibitem[Mur25]{Mur25}
    Takumi Murayama, \textit{Relative vanishing theorems for
    \(\mathbf{Q}\)-schemes}, Algebr. Geom. \textbf{12} (2025), no. 1, 84--144;
    \burlalt{https://doi.org/10.14231/AG-2025-003}{doi:10.14231/AG-2025-003};
    MR \href{https://mathscinet.ams.org/mathscinet-getitem?mr=4841227}{4841227}.

  \bibitem[NB13]{NB13}
    Luis N\'u\~nez-Betancourt, \textit{On certain rings of differentiable type and
    finiteness properties of local cohomology}, J. Algebra \textbf{379} (2013),
    1--10;
    \burlalt{https://doi.org/10.1016/j.jalgebra.2012.12.010}{doi:10.1016/j.jalgebra.2012.12.010};
    MR \href{https://mathscinet.ams.org/mathscinet-getitem?mr=3019242}{3019242}.

  \bibitem[NB14]{NB14}
    \bysame, \textit{Associated primes of local cohomology of flat extensions with
    regular fibers and \(\Sigma\)-finite \(D\)-modules}, J. Algebra \textbf{399}
    (2014), 770--781;
    \burlalt{https://doi.org/10.1016/j.jalgebra.2013.10.010}{doi:10.1016/j.jalgebra.2013.10.010};
    MR \href{https://mathscinet.ams.org/mathscinet-getitem?mr=3144611}{3144611}.

  \bibitem[NM14]{NM14}
    Luis Narv\'aez Macarro, \textit{Differential structures in commutative
    algebra}, Mini-course at the XXIII Brazilian Algebra Meeting, July 27--August 1, 2014,
    Maring\'a, Brazil, Version of Nov. 10, 2014;
    \url{https://personal.us.es/narvaez/DSCA_course_2014.pdf}.

  \bibitem[Pop85]{Pop85}
    Dorin Popescu, \textit{General N\'eron desingularization}, Nagoya Math. J.
    \textbf{100} (1985), 97--126;
    \burlalt{https://doi.org/10.1017/S0027763000000246}{doi:10.1017/S0027763000000246};
    MR \href{https://mathscinet.ams.org/mathscinet-getitem?mr=818160}{818160}.

  \bibitem[Pop86]{Pop86}
    \bysame, \textit{General N\'eron desingularization and approximation},
    Nagoya Math. J. \textbf{104} (1986), 85--115;
    \burlalt{https://doi.org/10.1017/S0027763000022698}{doi:10.1017/S0027763000022698};
    MR \href{https://mathscinet.ams.org/mathscinet-getitem?mr=868439}{868439}.

  \bibitem[Pop90]{Pop90}
    \bysame, \textit{Letter to the editor: ``General N\'eron desingularization and
    approximation''}, Nagoya Math. J. \textbf{118} (1990), 45--53;
    \burlalt{https://doi.org/10.1017/S0027763000002981}{doi:10.1017/S0027763000002981};
    MR \href{https://mathscinet.ams.org/mathscinet-getitem?mr=1060701}{1060701}.

  \bibitem[PS73]{PS73}
    Christian Peskine and Lucien Szpiro, \textit{Dimension projective finie et
    cohomologie locale. Applications \`a{} la d\'emonstration de conjectures
    de M. Auslander, H. Bass et A.  Grothendieck}, Inst. Hautes \'Etudes Sci.
    Publ. Math. \textbf{42} (1973), 47--119;
    Available at \url{https://www.numdam.org/item/PMIHES_1973__42__47_0};
    \burlalt{https://doi.org/10.1007/BF02685877}{doi:10.1007/BF02685877};
    MR \href{https://mathscinet.ams.org/mathscinet-getitem?mr=374130}{374130}.

  \bibitem[Put16]{Put16}
    Tony J. Puthenpurakal, \textit{Associated primes of local cohomology modules over
    regular rings}, Pacific J. Math. \textbf{282} (2016), no. 1, 233--255;
    \burlalt{https://doi.org/10.2140/pjm.2016.282.233}{doi:10.2140/pjm.2016.282.233};
    MR \href{https://mathscinet.ams.org/mathscinet-getitem?mr=3463431}{3463431}.

  \bibitem[Put18]{Put18}
    \bysame, \textit{On the ring of differential operators of certain
    regular domains}, Proc. Amer. Math. Soc. \textbf{146} (2018), no. 8, 3333--3343;
    \burlalt{https://doi.org/10.1090/proc/14039}{doi:10.1090/proc/14039};
    MR \href{https://mathscinet.ams.org/mathscinet-getitem?mr=3803659}{3803659}.

  \bibitem[SGA2]{SGA2}
    Alexander Grothendieck,
    \textit{Cohomologie locale des faisceaux coh\'erents et th\'eor\`emes
    de Lefschetz locaux et globaux (SGA 2)}, S\'eminaire de G\'eom\'etrie
    Alg\'ebrique du Bois Marie, 1962, With an expos\'e by Mich\`{e}le Raynaud,
    Revised reprint of the 1968 French original with a preface and edited by Yves Laszlo,
    Doc. Math. (Paris), vol. 4, Soc. Math. France, Paris, 2005;
    Available at \url{https://www.cmls.polytechnique.fr/perso/laszlo/sga2/sga2-smf.pdf};
    MR \href{https://mathscinet.ams.org/mathscinet-getitem?mr=2171939}{2171939}.

  \bibitem[SGA3\textsubscript{1}]{SGA3}
    \textit{{Sch\'emas en groupes ({SGA} 3). {T}ome {I}. {P}ropri\'et\'es
    g\'en\'erales des sch\'emas en groupes}}, S\'eminaire de G\'eom\'etrie
    Alg\'ebrique du Bois Marie, 1962--64, A seminar directed by Michel Demazure
    and Alexander Grothendieck with the collaboration of Michael Artin,
    Jean-\'Etienne Bertin, Pierre Gabriel, Michel Raynaud, and Jean-Pierre
    Serre, Revised and annotated edition of the 1970 French original edited by
    Philippe Gille and Patrick Polo, Doc. Math. (Paris), vol. 7, Soc. Math.
    France, Paris;
    Corrected version of Oct. 13, 2024 available at
    \url{https://webusers.imj-prg.fr/~patrick.polo/SGA3};
    MR \href{https://mathscinet.ams.org/mathscinet-getitem?mr=2867621}{2867621}.

  \bibitem[SGA4\textsubscript{3}]{SGA43}
    \textit{Th\'{e}orie des topos et cohomologie \'{e}tale des sch\'{e}mas (SGA 4).
    {T}ome 3}, S\'{e}minaire de G\'{e}om\'{e}trie Alg\'{e}brique du Bois Marie,
    1963--1964, A seminar directed by Michael Artin, Alexander Grothendieck, and
    Jean-Louis Verdier with the collaboration of Pierre
    Deligne and Bernard Saint-Donat, Lecture Notes in Math., vol. 305,
    Springer-Verlag, Berlin-New York, 1973;
    Revised version of Jul. 30, 2024 available at
    \url{http://fabrice.orgogozo.perso.math.cnrs.fr/SGA4};
    \burlalt{https://doi.org/10.1007/BFb0070714}{doi:10.1007/BFb0070714};
    MR \href{https://mathscinet.ams.org/mathscinet-getitem?mr=354654}{354654}.

  \bibitem[SGA5]{SGA5}
    \textit{Cohomologie {\(l\)}-adique et fonctions {\(L\)}}, S\'{e}minaire de
    G\'{e}ometrie Alg\'{e}brique du Bois-Marie 1965--1966 (SGA 5), Dirig\'{e}
    par Alexander Grothendieck. Avec la collaboration de Ionel Bucur, Christian
    Houzel, Luc Illusie, Jean-Pierre Jouanolou et Jean-Pierre Serre, Lecture
    Notes in Math., vol. 589, Springer-Verlag, Berlin-New York, 1977;
    \burlalt{https://doi.org/10.1007/BFb0096802}{doi:10.1007/BFb0096802};
    MR \href{https://mathscinet.ams.org/mathscinet-getitem?mr=491704}{491704}.

  \bibitem[Sin00]{Sin00}
    Anurag K. Singh, \textit{\(p\)-torsion elements in local cohomology modules},
    Math. Res. Lett. \textbf{7} (2000), no. 2-3, 165--176;
    \burlalt{https://doi.org/10.4310/MRL.2000.v7.n2.a3}{doi:10.4310/MRL.2000.v7.n2.a3};
    MR \href{https://mathscinet.ams.org/mathscinet-getitem?mr=1764314}{1764314}.

  \bibitem[SS04]{SS04}
    Anurag K. Singh and Irena Swanson, \textit{Associated primes of local cohomology modules
    and of Frobenius powers}, Int. Math. Res. Not. \textbf{33} (2004), 1703--1733;
    \burlalt{https://doi.org/10.1155/S1073792804133424}{doi:10.1155/S1073792804133424};
    MR \href{https://mathscinet.ams.org/mathscinet-getitem?mr=2058025}{2058025}.

  \bibitem[Swa98]{Swa98}
    Richard G. Swan, \textit{N\'eron-Popescu desingularization}, Algebra and
    geometry (Taipei, 1995), Lect. Algebra Geom., vol. 2, Int. Press, Cambridge,
    MA, 1998, pp. 135--192;
    MR \href{https://mathscinet.ams.org/mathscinet-getitem?mr=1697953}{1697953}.

  \bibitem[Ver67]{Ver67}
    Jean-Louis Verdier, \textit{Des cat\'egories d\'eriv\'ees des cat\'egories
    ab\'eliennes}, With a preface by Luc Illusie, Edited and with a note by
    Georges Maltsiniotis, Ast\'erisque, vol. 239, Soc. Math. France, Paris,
    1996, xii+253 pp.;
    \url{https://www.numdam.org/item/AST_1996__239__R1_0};
    MR \href{https://mathscinet.ams.org/mathscinet-getitem?mr=1453167}{1453167}.

  \bibitem[Wan23]{Wan23}
    Yinan Nancy Wang, \textit{Local cohomology modules and motivic Chern class
    computations}, Thesis (Ph.D.)--University of Michigan, 2023, vi+113 pp.;
    \burlalt{https://doi.org/10.7302/8176}{doi:10.7302/8176};
    MR \href{https://mathscinet.ams.org/mathscinet-getitem?mr=4675907}{4675907}.

\end{thebibliography}
\end{document}